\documentclass[12pt,a4paper]{article}
\usepackage{graphicx}
\usepackage{amsmath}
\usepackage[cp1251]{inputenc}
\usepackage[T2A]{fontenc}

\usepackage[russian]{babel}
\usepackage{a4}
\usepackage{amsfonts}
\usepackage{amssymb}

\newtheorem{theorem}{Theorem} [section]

\newtheorem{lemma}[theorem]{Lemma}

\newenvironment{proof}[1][Proof]{\textbf{#1.} }{\ \rule{0.5em}{0.5em}}
\numberwithin{equation}{section}

\begin{document}

\bigskip

UDC № 517.956.45

\bigskip
%----------------------------------------------------------------------------

\textbf{Classical solvability of multidimensional two-phase Stefan
problem for degenerate parabolic equations.}

\bigskip

\textbf{  S.P.Degtyarev}

\bigskip

\textbf{Institute of Applied Mathematics and Mechanics of Ukrainian
National Academy of Sciences}

E-mail: degtyar@i.ua

\bigskip

We prove locally in time the existence of a smooth solution for
multidimensional two-phase Stefan problem for degenerate parabolic
equations of the porous medium type. We establish also natural
H\"{o}lder class for the boundary conditions in the Cauchy-Dirichlet
problem for a degenerate parabolic equation.

\bigskip

Key words: free boundary, Stefan problem, classical solvability, porous medium
equation, degenerate parabolic equations.

\bigskip

\bigskip

\ \ \ \ \ \ \ \ \ \ \ \ \ \ \ \ \ \ \ \ \ \ \ \ \ \ \ \ \ \ \ \ \ \
\ \ \ \ \ \ \ \ \ \ \ \ \ \ \ \ \ \ To the memory of Professor
B.V.Bazaliy

\bigskip

\textbf{The final publication is available at Springer via}

 http://dx.doi.org/10.1007/s00030-014-0280-3

\bigskip

\section{Statement of the problem and the main result} \label{s1}

Classical solvability of the Stefan problem for uniformly parabolic
equations has been well studied - see for example papers \cite{1} -
\cite{5} and the references therein. At the same time, as has long
been known, the heat transfer model based on uniformly parabolic
equations, has some properties which can not be observed in the
reality, in particular, the infinite speed of propagation of
disturbances. We also know that more accurate model of the heat
transfer is the model which is based on degenerate parabolic
equations, such as equations of the form

\begin{equation}
u_{t}(x,t)=\nabla (|u|^{m-1}\nabla u(x,t))=f(x,t), \label{1.1}
\end{equation}
where $m>1$. As it is known, a short formulation of a classical
Stefan problem for the equation \eqref{1.1} is the equation

\begin{equation}
(\beta(u))_{t}=\nabla (|u|^{m-1}\nabla u(x,t))=0, \label{1.2}
\end{equation}
where $\beta(u)$ is a discontinuous function of the form

\[
\beta(u)=%
\genfrac{\{}{.}{0pt}{}{u,\quad\quad u\leq0,}{u+k,\quad u>0,}%
\]
where $ k> 0 $, is the latent heat of fusion (crystallization) and
the equation \eqref {1.2} is considered in the sense of
distributions. At that for a quasilinear equation \eqref{1.2} the
main unknown is, in fact, the interface $\{u = 0\}$, outside of
which the solution of \eqref{1.2} is smooth in view of the
well-known local theory of uniformly parabolic equations.

%---------------------------------------------
In its generalised formulation this problem was considered in a
number of papers, from which we mention, for example, \cite{V} -
\cite{DiBe}, and we do not pretend to be complete in this matter, as
the subject of our interest in this article is a smooth solution of
the problem.

%---------------------------------------------
As for the smooth solutions, in the case of one spatial variable
such problem for degenerate parabolic equations was considered in
\cite{6} - \cite{10}, where it was proved the existence of classical
solutions. See also \cite{11}.

%---------------------------------------------
The aim of this paper is the proof of the classical solvability of
the Stefan problem of the type \eqref{1.2} for a degenerate equation
in a multidimensional setting, that is, the proof of the existence
of smooth surface which is the interface $\{u = 0\}$, and the proof
of the smoothness of the solution up to the interface.

We now formulate a precise statement of the Stefan problem in a more
expanded than \eqref{1.2} form, as it is custom in the theory of
free boundary problems. Let $\Omega$ is a doubly connected domain in
$R^{N}$, whose boundary consists of two smooth connected surfaces
$\Gamma^{+}$ and $\Gamma^{-}$ without self-intersections, $\partial
\Omega = \Gamma^{+} \cup \Gamma^{-}$. Suppose, further, that $\Gamma
$ is a given smooth surface without self-intersections lying
strictly between $\Gamma^{+}$ and $\Gamma^{-}$ and separating the
domain $\Omega$ into two doubly connected subdomains $\Omega^{+}$
and $\Omega^{-}$, so that $\partial \Omega^{\pm} = \Gamma \cup
\Gamma^{\pm}$. For a fixed $T>0$ denote $\Omega_{T}=\Omega\times
(0,T)$, $\Omega^{\pm}_{T}=\Omega^{\pm}\times (0,T)$,
$\Gamma_{T}=\Gamma\times [0,T]$,
$\Gamma^{\pm}_{T}=\Gamma^{\pm}\times [0,T]$.

Denote by $S_{T}$ a smooth surface in the cylindrical domain
$\Omega_{T}$ in the space $(y, \tau) \in R^{N} \times [0, T]$, such
that at $\tau = 0$, it coincides with $\Gamma$, that is, $S_{T} \cap
\{\tau = 0\} = \Gamma $, $S_{T}$ does not intersect surfaces
$\Gamma^{\pm}_{T}$ and divides the $\Omega_{T}$ into two subdomains
$Q^{+}_{T}$ and $Q^{-}_{T}$, and the lateral boundaries of these
domains consist of $S_{T}$ and $\Gamma^{\pm}_{T}$ respectively.
Surface $S_{T}$ is unknown and has to be determined together with
the functions $u^{+}(y, \tau)$ and $u^{-}(y, \tau)$, which are
defined in $Q^{\pm}_{T}$ respectively. The triple $(S_{T}, u^{+},
u^{-})$ must satisfy the following conditions (we denote the
independent variables by the $(y, \tau)$ in view of the subsequent
change of variables):

\begin{equation}
\frac{\partial u^{\pm}}{\partial
\tau}-\nabla_{y}(a^{\pm}|u^{\pm}|^{m-1}\nabla_{y}u^{\pm})=0, \
(y,\tau)\in Q^{\pm}_{T}, \label{1.3}
\end{equation}

\begin{equation}
u^{+}(y,\tau)=u^{-}(y,\tau)=0, \  (y,\tau)\in S_{T},
 \label{1.4}
\end{equation}

\begin{equation}
a^{+}\sum_{i=1}^{N}\cos(\overrightarrow{N},y_{i})|u^{+}|^{m-1}u^{+}_{y_{i}}-
a^{-}\sum_{i=1}^{N}\cos(\overrightarrow{N},y_{i})|u^{-}|^{m-1}u^{-}_{y_{i}}=k
\cos (\overrightarrow{N},\tau), \ (y,\tau)\in S_{T},
 \label{1.5}
\end{equation}

\begin{equation}
u^{\pm}(y,\tau)=g^{\pm}(y,\tau), \  (y,\tau)\in \Gamma^{\pm}_{T},
 \label{1.6}
\end{equation}

\begin{equation}
u^{\pm}(y,0)=u^{\pm}_{0}(y).
 \label{1.7}
\end{equation}
Here $m>1$, $k>0$, $a^{\pm}>0$ are given constants, $g^{+}(y,\tau)$,
$g^{-}(y,\tau)$, $u^{+}_{0}(y)$, $u^{-}_{0}(y)$ are given functions,
at that
\begin{equation} \pm g^{\pm}(y,\tau)\geq \nu>0, \
(y,\tau)\in\Gamma^{\pm}_{T}; \ \ \pm u^{\pm}_{0}(y)>0, \ y\in
\Omega^{\pm}, \ u^{\pm}_{0}(y)=0, \ y\in \Gamma,
 \label{1.8}
\end{equation}
where $\nu$ is some positive constant: here and below we denoted by
the same symbols $\nu$, $\mu$, $C$ all absolute constants, or
constants that depend only on the given data of the problem. Note
that the conditions \eqref{1.4}, \eqref{1.5} are the three
independent conditions at unknown boundary $S_{T}$ arising from the
equation \eqref{1.2}.

To formulate the smoothness conditions which we impose on the data
of the problem, we introduce some weighted function spaces. First of
all, we use the standard Ho"lder space
$H^{l+\delta}(\overline{\Omega})\equiv
C^{l+\delta}(\overline{\Omega})$, $\delta \in (0,1)$, $l \in
\mathbb{N} \cup \{0\}$, with the norm $|u|^{(l + \delta)}_{\overline
{\Omega}}$, which are introduced in \cite{12}, and also spaces $H^{l
+ \delta, \frac{l + \delta}{2}}(\overline {\Omega_{T}})\equiv C^{l +
\delta, \frac{l + \delta}{2}}(\overline {\Omega_{T}})$ of functions
of $ (y, \tau) $ with the norm $ |u|^{(l + \delta)}_{\overline
{\Omega_ {T}}}$. In \cite{12} the surface of the corresponding
classes are also defined. We assume that the surfaces $\Gamma$,
$\Gamma^{\pm}$ belong to following classes

\begin{equation}
 \Gamma, \ \Gamma^{\pm} \ \in H^{4+\gamma}
 \label{1.9}
\end{equation}
with some $0< \gamma <1$. At the same time we suppose that the
functions $g^{\pm}$ in \eqref{1.6} are such that

\begin{equation}
h^{\pm}(y,\tau)\equiv |g^{\pm}(y,\tau)|^{m-1}g^{\pm}(y,\tau)\in
H^{4+\gamma,\frac{4+\gamma}{2}}(\Gamma^{\pm}_{T}).
 \label{1.10}
\end{equation}

Suppose, further, that $d^{+}(y)$ is a given function from $H^{2 +
\gamma} (\overline {\Omega^{+}})$, which models the distance from a
point $y \in \overline {\Omega^{+}}$ to the surface $ \Gamma $, that
is

\begin{equation}
\nu \leq d^{+}(y)/dist(y,\Gamma) \leq \nu^{-1}.
 \label{1.13}
\end{equation}
Note, that such function can be taken, for example, as the solution
of the following problem

\[
\Delta d^{+}(y)=-1, \ y\in \Omega^{+},
\]
\[
d^{+}(y)=0, \ y\in \Gamma, \ \ \ d_{+}(y)=1, \ y\in \Gamma^{+} .
\]
Let, further, $d^{-}(y)$ is an analogous function for the domain
 $\overline{\Omega^{-}}$.
%--------------------------------------------

Denote here and below

\begin{equation}
\alpha=\frac{m-1}{m}\in(0,1),\label{B0}%
\end{equation}
where $m>1$ is the exponent from the equation \eqref{1.3}.

We will use the spaces
$C^{2+\gamma,\frac{2+\gamma}{2}}_{s}(\overline{\Omega}^{\pm}_{T})$
from the paper \cite{12.1} (they are analogous to the corresponding
spaces from \cite{14}), where $0<\gamma<\alpha$ is some exponent,
and we require

\begin{equation}
0<\gamma<\alpha . \label{B01}%
\end{equation}

These spaces are defined in the following way. First we define the
spaces $C^{2+\gamma}_{s}(R^{N}_{+T})$ in the domain

\begin{equation}
R_{+T}^{N}=R_{+}^{N}\times\lbrack0,T],\quad R_{+}^{N}=\left\{
x=(x^{\prime
},x_{N}):x_{N}\geq0,x^{\prime}\in R^{N-1}\right\}  . \label{B1}%
\end{equation}
Define a distance between points $x,\overline{x}\in R^{N}_{+}$
according to the following formula

\begin{equation}
s(x,\overline{x})=\frac{\left|  x-\overline{x}\right|
}{x_{N}^{\alpha
/2}+\overline{x}_{N}^{\alpha/2}+|x^{\prime}-\overline{x}^{\prime}|^{\alpha/2}%
}. \label{B2}%
\end{equation}
Define further a H\"{o}lder constant of a function $u(x,t)$ with
respect to the variable $x$ according to the distance we have
introduced

\begin{equation}
H_{s,R_{+T}^{N}}^{\gamma}(u)\equiv\sup_{(x,t),(\overline{x},t)\in R_{+T}^{N}%
}\frac{|u(x,t)-u(\overline{x},t)|}{s(x,\overline{x})^{\gamma}}. \label{B3}%
\end{equation}
 Denote by
$C_{s}^{\gamma,\gamma/2}(R_{+T}^{N})$ the space of functions
$u(x,t)$ with the finite norm

\begin{equation}
|u|_{C_{s}^{\gamma,\gamma/2}(R_{+T}^{N})}\equiv |u|^{(\gamma)}_{s,R^{N}_{+T}} \equiv
|u|_{R_{+T}^{N}}^{(0)}+H_{s,R_{+T}^{N}%
}^{\gamma}(u)+\left\langle u\right\rangle _{t,R_{+T}^{N}}^{(\frac{\gamma}{2}%
)}, \label{B4}%
\end{equation}
where $\left\langle u\right\rangle
_{t,R_{+T}^{N}}^{(\frac{\gamma}{2})}$ is the H\"{o}lder constant
with respect to $t$ with the exponent $\gamma/2$ of the function
 $u(x,t)$.

Difine further the space
$C_{s}^{2+\gamma,\frac{2+\gamma}{2}}(R_{+T}^{N})$ as the Banach
space of functions $u(x,t)$ with the finite norm

\[
|u|_{C_{s}^{2+\gamma,\frac{2+\gamma}{2}}(R_{+T}^{N})}\equiv
|u|^{(2+\gamma)}_{s,R^{N}_{+T}} \equiv
|u|_{C_{s}^{\gamma,\gamma/2}(R_{+T}^{N})}
+\sum\limits_{i=1}^{N}|u_{x_{i}}|_{C_{s}^{\gamma,\gamma/2}(R_{+T}^{N})}+
\]%

\begin{equation}
+|u_{t}%
|_{C_{s}^{\gamma,\gamma/2}(R_{+T}^{N})}+\sum\limits_{i,j=1}^{N}|x_{N}^{\alpha}u_{x_{i}x_{j}}|_{C_{s}^{\gamma,\gamma/2}%
(R_{+T}^{N})}. \label{B5}%
\end{equation}

Finally, the spaces
$C_{s}^{\gamma,\gamma/2}(\overline{\Omega}_{T}^{\pm})$ and
$C_{s}^{2+\gamma,\frac{2+\gamma}{2}}(\overline{\Omega}_{T}^{\pm})$
are defined as the spaces of functions $u(x,t)$ with the property,
that in some neighborhood of $\Gamma_{T}$ after the corresponding
change of variables  functions $u(x,t)$ belong to the space
$C_{s}^{\gamma,\gamma/2}(R_{+T}^{N})$ or to the space
$C_{s}^{2+\gamma,\frac{2+\gamma}{2}}(R_{+T}^{N})$ correspondingly,
and out of some neighborhood of $\Gamma_{T}$ the functions $u(x,t)$
belong to the standard spaces
$C^{\gamma,\gamma/2}(\overline{\Omega}_{T}^{\pm})$ or
$C^{2+\gamma,\frac{2+\gamma}{2}}(\overline{\Omega}_{T}^{\pm})$. In
particular, for a function $u\in
C_{s}^{2+\gamma,\frac{2+\gamma}{2}}(\overline{\Omega}_{T}^{\pm})$
the following norm is finite

\begin{equation}
\left|  (d^{\pm})^{\alpha}u_{x_{i}x_{j}}\right|  _{C_{s}^{\gamma,\gamma/2}%
(\overline{\Omega}_{T}^{\pm})}<\infty,\quad i,j=\overline{1,N}.\label{B6}%
\end{equation}

In the case of functions $u(x)$ from the variable $x$ only, $x\in
\overline{\Omega}^{\pm}$, the spaces
$C_{s}^{\gamma}(\overline{\Omega}^{\pm})$ and
$C_{s}^{2+\gamma}(\overline{\Omega}^{\pm})$ are defined in the
completely analogous way.

%---------------------------------------------------------
We will use also some standard anisotropic spaces of smooth
functions, which are more general than the spaces $H^{l,l/2}\equiv
C^{l,l/2}$. Namely, we will use the spaces

\[
C^{l_{1},l_{2}}(\overline{\Omega}^{\pm}_{T}), \ \
C^{l_{1},l_{2}}(\Gamma_{T}),
\]
where $l_{1}$, $l_{2}$ are noninteger positive numbers. Such spaces
are defined in \cite{15}, for example. They consist from the
functions, which have H\"{o}lder continuous with respect to $x$
withe the exponent $l_{1}-[l_{1}]$ derivatives with respect to $x$
up to the order $[l_{1}]$, and also these functions have H\"{o}lder
continuous with respect to $t$ withe the exponent $l_{2}-[l_{2}]$
derivatives with respect to $t$ up to the order $[l_{2}]$. The norm
of such space we denote by

\[
|u|_{C^{l_{1},l_{2}}(\overline{\Omega}^{\pm}_{T})}\equiv
|u|^{(l_{1},l_{2})}_{\overline{\Omega^{\pm}_{T}}}.
\]

In fact, the functions from such spaces possess the property, that
their derivatives with respect to $x$ are smooth with respect to $t$
and their derivatives with respect to $t$ are smooth with respect to
$x$. More precisely, if $k_{1}$ is a multiindex, that is,
$k_{1}=(k_{1,1}, k_{1,2},...,k_{1,N})$ where $k_{1,i}$ are
nonnegative integers, and $k_{2}$ is a nonnegative integer, then the
function $\partial^{k_{1}}_{x}\partial^{k_{2}}_{t}u(x,t)$ belongs to
the space $C^{m_{1},m_{2}}(\overline{\Omega}^{\pm}_{T})$ with the
exponents $m_{i}=\mu l_{i}$, where

\[
\mu=1-\frac{|k_{1}|}{l_{1}}-\frac{k_{2}}{l_{2}}
\]
(see \cite{15}), and also

\begin{equation}
|\partial^{k_{1}}_{x}\partial^{k_{2}}_{t}u|_{C^{m_{1},m_{2}}(\overline{\Omega}^{\pm}_{T})}\leq
C |u|_{C^{l_{1},l_{2}}(\overline{\Omega}^{\pm}_{T})}.
\label{Sol.1}
\end{equation}

In addition, we will use the spaces with zero at the bottom of their
notation, that is, the spaces (compare \cite{12}, Ch.IV)

\begin{equation}
C^{2+\gamma,\frac{2+\gamma}{2}}_{0,s}(\overline{\Omega}^{\pm}_{T}),
\ C_{0}^{l_{1},l_{2}}(\overline{\Omega}^{\pm}_{T}), \
C^{\gamma,\frac{\gamma}{2}}_{0,s}(\overline{\Omega}^{\pm}_{T}), \
C_{0}^{l_{1},l_{2}}(\Gamma_{T}). \label{B.2}
\end{equation}
Such notations means closed subspace of the corresponding space,
which consists of functions that vanish at $t=0$ together with all
their derivatives with respect to $t$ up to the highest possible
order in the corresponding space.

%---------------------------------------------------------
Let here and throughout below the exponent $\beta \in (0,1)$ is
connected to the exponent $\gamma$ by the equality

\begin{equation}
\beta=\gamma(1-\frac{\alpha}{2})\label{B7}%
\end{equation}
In particular, we will use the spaces
$C^{2+\beta-\alpha,1+\frac{\gamma}{2}}(\overline{\Omega}_{T}^{\pm})$
and $C^{2+\beta-\alpha,1+\frac{\gamma}{2}}(\Gamma_{T})$, which
consist of the functions $u(x,t)$ with smoothness with respect to
$x$ up to the order $2+\beta-\alpha$ and with smoothness with
respect to $t$ up to the order $1+\gamma/2$, that is

\begin{equation}
\sum_{i=1}^{N}\left\langle u_{x_{i}}\right\rangle
_{x,\overline{\Omega}_{T}^{\pm}}^{(1+\beta-\alpha
)}+\left\langle u_{t}\right\rangle _{t,\overline{\Omega}_{T}^{\pm}}^{(\frac{\gamma}{2})}<\infty.\label{B8}%
\end{equation}
It is easy to calculate, that because of the relation \eqref{B7}, we
have

\begin{equation}
1+\frac{\gamma}{2}=\frac{2+\beta-\alpha}{2-\alpha},\label{B9}%
\end{equation}
so we also use the following notation for the mentioned above spaces
\begin{equation}
C^{2+\beta-\alpha,\frac{2+\beta-\alpha}{2-\alpha}}(\overline{\Omega}_{T}%
^{\pm}),\quad
C^{2+\beta-\alpha,\frac{2+\beta-\alpha}{2-\alpha}}(\Gamma
_{T}).\label{B10}%
\end{equation}

%--------------------------------------------

%Можно показать, что нормы \eqref{1.14}, построенные по двум
%различным функциям $d_{+}(y)\in
%H^{1+\beta}(\overline{\Omega^{+}})$, удовлетворяющим условию
%\eqref{1.13}, эквивалентны, так что соответствующие пространства
%$C^{2+\beta}_{\alpha}(\overline{\Omega^{\pm}})$ изоморфны. Поэтому
%мы не указываем явно функции $d_{\pm}(y)$ в определении указанных
%пространств.

Further, we suppose that the initial conditions in \eqref{1.7} are
such that

\begin{equation}
v_{0}^{\pm}(y)\equiv |u_{0}^{\pm}(y)|^{m-1}u_{0}^{\pm}(y)\in
C^{2+\gamma'}_{s}(\overline{\Omega^{\pm}}),
 \label{1.11}
\end{equation}
where $\gamma'>\gamma$. Besides, we suppose that

\begin{equation}
\frac{\partial v_{0}^{\pm}(y)}{\partial \overrightarrow{n}}\geq
\nu >0, \ \ y\in \Gamma,
 \label{1.12}
\end{equation}
where $\overrightarrow{n}$ is a normal vector to the surface
$\Gamma$ which is directed into $\Omega^{+}$.

%%%%%%%%%%%%%%%%%%%%%%%%%%%%%%%%%%%%%%%%%%%%%%%%%%%%%%%%%%%%%%%%
%%Отметим, что мы для простоты предполагаем
%%принадлежность начальных данных к обычному гладкому классу, хотя
%%дополнительными рассуждениями можно было бы добиться более слабого
%%предположения о начальных данных из пространства
%%$C_{s}^{2+\gamma'}(\overline{\Omega^{\pm}})$ - это связано, в
%%частности, с необходимостью продолжения начальных данных в область
%%$t>0$. Предположение же $\gamma'>\gamma$ для нас существенно.
%%%%%%%%%%%%%%%%%%%%%%%%%%%%%%%%%%%%%%%%%%%%%%%%%%%%%%%%%%%%%%%%

We will show below, that the free (unknown) boundary $S_{T}$ in
\eqref{1.4}, \eqref{1.5}  can be parameterized in terms of its
deviation from the given surface $\Gamma_{T}=\Gamma \times [0,T]$.
We follow to \cite{16} to give the strict formulation. Let
$\omega=(\omega_{1},...,\omega_{N-1})$ is a local curvilinear
coordinates in a domain $\Theta$ on $\Gamma$. In some small enough
neighbourhood $\mathcal{N}$ in $R^{N}$ of the surface $\Gamma$ we
introduce the coordinates $(\omega,\lambda)$ in the way that for any
$x\in \mathcal{N}$ we have the following unique representation

\begin{equation}
x=x'(x)+\overrightarrow{n}(x'(x))\lambda\equiv x(\omega)+\overrightarrow{n}(\omega)\lambda, \label{ab1.1}
\end{equation}
where $x'(x)=x(\omega)$ is the point in the domain $\Theta$ on the
surface $\Gamma$ with the coordinates $\omega$,
$\overrightarrow{n}(\omega)$ - normal to $\Gamma$ at the point
$x(\omega)$ with the direction into $\Omega^{+}$, $\lambda\in R$
means, in fact, deviation of the point $x$ from $\Gamma$, at that
$\pm \lambda>0$ for $x\in \Omega^{\pm}$. We assume that the
mentioned above neighbourhood $\mathcal{N}$ of the surface $\Gamma$
is the set

\[
\mathcal{N}=\{x\in\Omega: |\lambda(x)|<\gamma_{0}\},
\]
where $\gamma_{0}$ is small enough and will be chosen below.

Let $\rho(x',t)\equiv\rho(\omega,t)$ is a small and regular
function, which is defined on the surface $\Gamma_{T}$. Let us note,
that here and in what follows we use the notation $\rho(\omega,t)$
with the argument $\omega$ instead of $\rho(x',t)$ for all functions
on the surface $\Gamma$ if it does not cause ambiguity. We do that
just for simplification of the notation, bearing in mind that in
each local domain $\Theta $ on $\Gamma $ we can introduce local
coordinates $\omega$. At the same time the coordinate $\lambda$ in
\eqref{ab1.1} does not depend on the choice of local coordinates
$\omega$.

We parameterize the unknown surface $S_{T}$ we the help of the
unknown function $\rho(\omega,t)$ as follows
\begin{equation}
S_{T}\equiv \Gamma_{\rho,T}=\{(x,t)\in \Omega_{T}: \
x=x'+\rho(x',t)\overrightarrow{n}(x')=x(\omega)+\rho(\omega,t)\overrightarrow{n}(\omega)\},
 \label{1.15}
\end{equation}
where $x'\equiv x(\omega)\in \Gamma$. Note, that this definition of
the surface $S_{T}\equiv \Gamma_{\rho,T}$ does not depend on a
choice of local coordinates $\omega$ in a particular local domain on
$\Gamma$. Thus, the unknown function $\rho(\omega,t)$  means, in
fact, deviation of the surface $\Gamma_{\rho,T}=S_{T}$ from the
given surface $\Gamma_{T}$.

Along with $Q^{+}_{T}$, $Q^{-}_{T}$ in \eqref{1.3} we use the
notation $\Omega^{+}_{\rho,T}=Q^{+}_{T}$ and
$\Omega^{-}_{\rho,T}=Q^{-}_{T}$ for the subdomains that
$\Gamma_{\rho,T}=S_{T}$ separates the domain $\Omega_{T}$. Let,
further, $ \rho(x,t)$ is an extension of the function
$\rho(\omega,t)$ from the surface $\Gamma_{T}$ to the whole domain
$\Omega_{T}$ to a function with the support in the neighborhood
$\mathcal{N}_{T}=\mathcal{N}\times [0, T]$ of the surface
$\Gamma_{T}$, $\rho(x,t)=E\rho(\omega,t)$, $E$ is some fixed
extension operator (the way of such an extension will be listed
below), at that we will denote $\rho^{\pm}\equiv
E\rho|_{\overline{\Omega}^{\pm}_{T}}\equiv E^{\pm}\rho $.

Define a mapping $e_{\rho}(x,t)$ from the domain
$\overline{\Omega_{T}}$ on itself with the help of the formula
$e_{\rho}: (x,t) \rightarrow (y,\tau)$, where, according to the
notations of \eqref{ab1.1},

\begin{equation}
y=\genfrac{\{}{.}{0pt}{}{x'(x)+\overrightarrow{n}(x'(x))(\lambda(x)+\rho(x,t)), \ \ x\in
\mathcal{N},}
{x, \ \ \ \ x\in \overline{\Omega}\setminus \mathcal{N},}%
\label{4.1dop}
\end{equation}
\[
\tau=t,
\]
or, with the help of the local coordinates $\omega$,

\begin{equation}
y=\genfrac{\{}{.}{0pt}{}{x'(\omega(x))+\overrightarrow{n}(\omega(x))(\lambda(x)+\rho(x,t)), \ \ x\in
\mathcal{N},}
{x, \ \ \ \ x\in \overline{\Omega}\setminus \mathcal{N},}%
\label{4.1}
\end{equation}
\[
\tau=t.
\]
Here $x'(x)\in \Gamma$, $\omega(x)$, $\lambda(x)$ are
$(\omega,\lambda)$- coordinates of a point $x$ in the neighbourhood
$\mathcal{N}$. Note here, that the definition of the mapping
$e_{\rho}$ does not depend on a choice of local coordinates $\omega$
 on the surface $\Gamma$.

We choose $\gamma_{0}$ small enough so that under the condition
\begin{equation}
|\rho|^{1+\beta}_{\Gamma}\leq 2\gamma_{0} \label{4.2}
\end{equation}
the mapping $e_{\rho}$ is a diffeomorphism of
$\overline{\Omega_{T}}$ on themselves, and also the mapping
$e_{\rho}$ is a diffeomorphism of the domains
$\overline{\Omega^{\pm}_{T}}$ on the domains
$\overline{\Omega^{\pm}_{\rho,T}}$. Note, that the surface
$\Gamma_{\rho,T}$ is exactly the image of the surface $\Gamma_{T}$
under this mapping, and the mapping $e_{\rho}(x,t)$ is the identical
mapping out of the neighbourhood $\mathcal{N}_{T}$ of $\Gamma_{T}$.

%--------------------------------------

About the exponents of the H\"{o}lder spaces we use we suppose that

\begin{equation}
0<\gamma<\gamma'<1, \ \ \gamma<\min\{\alpha, 1-\alpha\}. \label{B.7}
\end{equation}

Note, that under our chice of $\gamma$ the restriction
$\gamma<\frac{\alpha}{1-\alpha/2}$ is also fulfilled. The last
restriction was introduced in \cite{12.1} at the studying of the
homogeneous Cauchy-Dirichlet problem for degenerate equations. We
need the restriction $\gamma<1-\alpha$ to have the inequality
$1+\beta-\alpha>1-\alpha>\gamma$, which implies that the first
derivatives with respect to $x$ of the functions from the classes
$C^{2+\beta-\alpha,\frac{2+\beta-\alpha}{2-\alpha}}$,
$C_{s}^{2+\gamma,\frac{2+\gamma}{2}}(\overline{\Omega}^{\pm}_{T})$
are more smooth than $C^{\gamma,\gamma/2}$,
$C^{\gamma,\gamma/2}_{s}$.  At last, we need the restriction
$\gamma<\alpha$ to expressions of the form
$(d^{\pm}(x))^{\alpha}\eta(x,t)$ with a smooth function $\eta(x,t)$
would belong to the corresponding space $C_{s}^{\gamma,\gamma/2}$.

%--------------------------------------
Let us formulate now the main result.

\begin{theorem} \label{T1.1}
Let the conditions \eqref{1.8}-\eqref{1.10}, \eqref{B.7} on the data
of the problem \eqref{1.3} - \eqref{1.7} and the conditions
\eqref{1.11}, \eqref{1.12} are fulfilled. Let also the natural
adjustment conditions up to the first order at $\tau=0$, $y\in
\Gamma, \Gamma^{\pm}$  for the problem \eqref{1.3} - \eqref{1.7} are
fulfilled.  Then there exists such $T>0$, that on the time interval
$[0,T]$ the problem \eqref{1.3} - \eqref{1.7} has the unique smooth
solution, at that the the unknown boundary can be represented as in
\eqref{1.15} with the function $\rho(\omega,t)$ with the properties
\begin{equation}
\rho(\omega,t)\in
C^{2+\beta-\alpha,\frac{2+\beta-\alpha}{2-\alpha}}(\Gamma_{T}), \
\rho_{t}(\omega,t) \in
C^{1+\beta-\alpha,\frac{1+\beta-\alpha}{2-\alpha}}(\Gamma_{T}),
\label{T.1}
\end{equation}
\[
\rho^{\pm}(x,t)=E^{\pm}\rho(\omega,t)\in
C_{s}^{2+\gamma}(\overline{\Omega}^{\pm}_{T}),
\]
where $\rho(x,t)=E\rho(\omega,t)$ is the extension of the function
$\rho(\omega,t)$ to the domain $\overline{\Omega_{T}}$. The
functions $u^{\pm}(y,\tau)$ in \eqref{1.3} are such that
\begin{equation}
v^{\pm}(x,t)\equiv (|u^{\pm}(y,\tau)|^{m-1}u^{\pm}(y,\tau))\circ
e_{\rho}(x,t) \in C^{2+\gamma}_{s}(\overline{\Omega}^{\pm}_{T}).
\label{T.2}
\end{equation}
Thus, in particular, all of the relations of the problem \eqref{1.3}
- \eqref{1.7} are satisfied in the classical sense.
\end{theorem}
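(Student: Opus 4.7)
The plan is to reduce the free boundary problem to a fixed-domain nonlinear system in the new unknowns $v^{\pm}=|u^{\pm}|^{m-1}u^{\pm}$ and the deviation $\rho$, and to solve this system by a Banach fixed-point argument built on the linear degenerate Cauchy-Dirichlet theory from \cite{12.1}.

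First I would introduce $v^{\pm}(x,t)=(|u^{\pm}|^{m-1}u^{\pm})\circ e_{\rho}(x,t)$ on the fixed domains $\overline{\Omega}_{T}^{\pm}$. In each phase one has $u^{\pm}=\pm|v^{\pm}|^{1/m}$, and \eqref{1.3} takes the form $v_{t}^{\pm}=a^{\pm}m|v^{\pm}|^{\alpha}\Delta v^{\pm}$ before the transport by $e_{\rho}$. This is exactly the degenerate structure for which the weighted space $C_{s}^{2+\gamma,\frac{2+\gamma}{2}}(\overline{\Omega}_{T}^{\pm})$ was designed, since the factor $|v^{\pm}|^{\alpha}$ behaves like $(d^{\pm})^{\alpha}$ near $\Gamma$ and matches the weight in \eqref{B6}. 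The diffeomorphism $e_{\rho}$ of \eqref{4.1} pulls the unknown interface $S_{T}$ back onto $\Gamma_{T}$, introducing $\rho$-dependent perturbations of the coefficients (small in $C_{s}^{\gamma,\gamma/2}$ when $\rho$ is small) and first-order terms stemming from $\partial_{t}e_{\rho}$. Conditions \eqref{1.4}--\eqref{1.7} transform into $v^{\pm}|_{\Gamma_{T}}=0$, $v^{\pm}|_{\Gamma_{T}^{\pm}}=h^{\pm}$, $v^{\pm}|_{t=0}=v_{0}^{\pm}$, and the Stefan condition \eqref{1.5}, after expressing $\cos(\overrightarrow{N},\tau)$ through $\rho_{t}$, becomes an evolution equation on $\Gamma_{T}$ of the schematic form
\begin{equation}
k\rho_{t}=\tfrac{a^{+}}{m}\partial_{n}v^{+}-\tfrac{a^{-}}{m}\partial_{n}v^{-}+\mathcal{F}(\rho,\nabla_{\Gamma}\rho,v^{\pm}),
\label{plan.stef}
\end{equation}
with a smooth $\mathcal{F}$ vanishing at the origin.

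Next I would set up a fixed-point map $\Phi$ on a small ball in
\[
X_{T}=\{(\hat\rho,\hat v^{+},\hat v^{-}):\hat\rho\in C^{2+\beta-\alpha,\frac{2+\beta-\alpha}{2-\alpha}}(\Gamma_{T}),\ \hat v^{\pm}\in C_{s}^{2+\gamma,\frac{2+\gamma}{2}}(\overline{\Omega}_{T}^{\pm})\},
\]
centred at an initial approximation $(\rho_{\ast},v_{\ast}^{\pm})$ constructed from $v_{0}^{\pm}$ and extended to $t>0$ so as to fulfil the first-order adjustment conditions on $\Gamma\cup\Gamma^{\pm}$. Given $(\hat\rho,\hat v^{\pm})\in X_{T}$ I would freeze $\rho=\hat\rho$ in the transported coefficients, substitute $\hat v^{\pm}$ into the nonlinear terms, and then (i) solve the two linear degenerate Cauchy-Dirichlet problems for $v^{\pm}\in C_{s}^{2+\gamma,\frac{2+\gamma}{2}}(\overline{\Omega}_{T}^{\pm})$ via the solvability theorem of \cite{12.1}, using \eqref{1.10} and \eqref{1.11}; (ii) define the updated $\rho$ by integrating \eqref{plan.stef} from $t=0$ with $\rho(\omega,0)=0$. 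The identity \eqref{B9} is crucial here: it guarantees that the trace $\partial_{n}v^{\pm}|_{\Gamma_{T}}$ of any function in $C_{s}^{2+\gamma,\frac{2+\gamma}{2}}$ automatically lies in $C^{1+\beta-\alpha,\frac{1+\beta-\alpha}{2-\alpha}}(\Gamma_{T})$, so that integration in $t$ delivers the new $\rho$ in precisely the class \eqref{T.1}; a fixed extension operator $E^{\pm}$ compatible with the weight $(d^{\pm})^{\alpha}$ then places $E^{\pm}\rho$ in $C_{s}^{2+\gamma}(\overline{\Omega}_{T}^{\pm})$.

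The main obstacle will be to prove that $\Phi$ is a contraction on a small ball of $X_{T}$ for $T$ small enough. This demands Lipschitz estimates, in the anisotropic weighted norms $|\cdot|_{s}^{(2+\gamma)}$, for the differences of solutions of the linear degenerate problem with respect to variations of both the $\hat\rho$-dependent coefficients and the $\hat v^{\pm}$-dependent source terms, together with an analogous estimate for the trace map $v^{\pm}\mapsto\partial_{n}v^{\pm}|_{\Gamma_{T}}$; in particular, handling the degenerate weight while tracking differences is the delicate point. Smallness in $T$ is gained by working in the subspaces \eqref{B.2} with zero at the bottom of the notation: arranging all increments to vanish up to first order at $t=0$ supplies a factor $T^{\mu}$ with some $\mu>0$ in every H\"older seminorm involved, so that the Lipschitz constant of $\Phi$ is $O(T^{\mu})$. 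Condition \eqref{1.12}, propagated by the maximum principle, keeps $\partial_{n}v^{+}-\partial_{n}v^{-}$ bounded away from zero so that \eqref{plan.stef} is non-degenerate as an ODE for $\rho_{t}$, which is needed both for the contraction and for uniqueness. Once the fixed point is produced, \eqref{4.2} combined with the ball constraint on $\rho$ ensures that $e_{\rho}$ is a diffeomorphism, so reverting to the variables $(y,\tau)$ yields the classical solution of \eqref{1.3}--\eqref{1.7} with the regularity \eqref{T.1}--\eqref{T.2}.
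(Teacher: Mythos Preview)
Your overall strategy---pass to $v^{\pm}=|u^{\pm}|^{m-1}u^{\pm}$, pull back by $e_{\rho}$, linearize about an initial approximation with extra smoothness, and close by a contraction using the inequalities \eqref{B.3}--\eqref{B.6}---is exactly what the paper does. But there is a genuine gap in the step where you ``integrate \eqref{plan.stef} from $t=0$'' and claim that ``integration in $t$ delivers the new $\rho$ in precisely the class \eqref{T.1}.'' The trace estimate \eqref{SlA.1} only gives $\partial_{n}v^{\pm}|_{\Gamma_{T}}\in C^{1+\beta-\alpha,\frac{1+\beta-\alpha}{2-\alpha}}(\Gamma_{T})$, so the right-hand side of your Stefan ODE has spatial regularity $1+\beta-\alpha$ (note $1+\beta-\alpha<1$ under \eqref{B.7}). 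Integrating $\rho_{t}$ in $t$ cannot raise the spatial regularity: you obtain $\rho(\cdot,t)\in C^{1+\beta-\alpha}_{x'}$, not the $C^{2+\beta-\alpha}_{x'}$ required by \eqref{T.1}. Your decoupled iteration therefore does not map $X_{T}$ into itself, and the fixed-point argument does not close.

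The paper recovers this missing spatial derivative by treating the linearization as a \emph{coupled} system for $(v^{+},v^{-},\delta)$ and developing a model half-space problem \eqref{M.2}--\eqref{M.6} in which, after the substitution \eqref{CC.1}, the Dirichlet condition becomes $u^{\pm}+A^{\pm}\rho=0$ on $\{x_{N}=0\}$. It is precisely this coupling that allows Lemma~\ref{LM1}: a maximum-principle barrier argument on second differences $\Delta_{i,y_{1}}\Delta_{j,y_{2}}$ (in the spirit of \cite{17}) yields $\langle\nabla_{x'}\rho\rangle_{x'}^{(1+\beta-\alpha)}\leq C(\mathcal{N}(T)+\mathcal{M}(T))$ directly, without any ODE integration. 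Existence for the coupled linear problem is then obtained by first adding a regularizing term $-\varepsilon\Delta_{\Gamma}\delta$ to the Stefan condition (see \eqref{6.3}, \eqref{M.4}), solving by a contraction in $\delta$ for $\varepsilon>0$, and passing to the limit $\varepsilon\to 0$ using the $\varepsilon$-uniform estimate \eqref{CC.2} of Theorem~\ref{TM1}. In short, the step you compress into ``integrate the ODE'' is the core analytic difficulty of the paper and requires the coupled model-problem Schauder estimate of Section~\ref{s3}.
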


Subsequent sections of the paper devoted to the proof of the theorem
\ref{T1.1} in accordance with such a plan.

In the section \ref{s2} on the basis of  equivalent norms in the
spaces $C^{\gamma, \gamma/2}_{s}(\overline{\Omega}^{\pm}_{T})$ the
natural space of the traces on $\Gamma_{T}$ of the functions from
class $C^{2 + \gamma, \frac {2 +
\gamma}{2}}_{s}(\overline{\Omega}^{\pm}_{T})$ is studied. This
allows us to extend the results of \cite{12.1} about the solvability
of the homogeneous initial boundary value problem for a degenerate
equation to the case of the inhomogeneous problem.

These results are then used in the section \ref{s3} to study a model
Stefan problem for degenerate equations, which is one of the central
points of this paper. In this case, for the Schauder estimates of
the model problems the idea of the paper \cite{17} on the
application of the maximum principle to obtain the Schauder
estimates is used.

In the sections \ref{s4} and \ref{s5} the initial problem with the
unknown boundary is reduced to nonlinear problem in the fixed domain
and linearized on functions that expand  the initial data to the
domain $t>0$.

The section \ref{s6} is devoted to the study of the resulting linear
problem for degenerate equations on the basis of the results of the
section \ref{s3} about the properties of the corresponding model
problem. In this case, to prove the solvability of the linear
problem, we apply the idea of \cite{1} on parabolic regularization
of the Stefan boundary condition. Note that the corresponding model
problem in Section \ref{s3} is considered in the presence of the
regularization.

Finally, the section \ref{s7} completes the proof of the theorem
\ref{T1.1} by the method of \cite{2}.

%========================================================================
\section{Auxiliary results on the spaces $C^{\gamma,\gamma/2}_{s}(\overline{\Omega}^{\pm}_{T})$,
$C^{2+\gamma,\frac{2+\gamma}{2}}_{s}(\overline{\Omega}^{\pm}_{T})$,
$C^{2+\beta-\alpha,\frac{2+\beta-\alpha}{2-\alpha}}(\Gamma_{T})$.}
\label{s2}

%--------------------------------------
Note first, that for the spaces with zero in \eqref{B.2} the
following relations are valid. Let $\gamma'>\gamma$, $l_{1}'>l_{1}$,
$l_{2}'>l_{2}$. Let also functions $u$ and $v$ belong to one of the
mentioned spaces with the exponents of smoothness $\gamma'$,
$2+\gamma'$, $l_{1}'$, $l_{2}'$. Then

\begin{equation}
|u|^{(\gamma)}_{s,\overline{\Omega}^{\pm}_{T}}\leq
CT^{\mu}|u|^{(\gamma')}_{s,\overline{\Omega}^{\pm}_{T}},
\label{B.3}
\end{equation}

\begin{equation}
|u|^{(2+\gamma)}_{s,\overline{\Omega}^{\pm}_{T}}\leq
CT^{\mu}|u|^{(2+\gamma')}_{s,\overline{\Omega}^{\pm}_{T}},
\label{B.4}
\end{equation}

\begin{equation}
|uv|^{(\gamma)}_{s,\overline{\Omega}^{\pm}_{T}}\leq
CT^{\mu}|u|^{(\gamma)}_{s,\overline{\Omega}^{\pm}_{T}}
|v|^{(\gamma)}_{s,\overline{\Omega}^{\pm}_{T}}, \label{B.5}
\end{equation}

\begin{equation}
|u|^{(l_{1},l_{2})}_{\overline{\Omega}^{\pm}_{T}}\leq
CT^{\mu}|u|^{(l_{1}',l_{2}')}_{\overline{\Omega}^{\pm}_{T}},
\label{B.6}
\end{equation}
where $\mu$ is some positive constants which depend on  $\gamma$,
$\gamma'$, $l_{i}$, $l_{i}'$.

These inequalities are well known for the spaces
$C_{0}^{l_{1},l_{2}}$ (see \cite{12}, \cite{Bizh}), and for the
spaces $C^{\gamma,\gamma/2}_{0,s}$,
$C^{2+\gamma,\frac{2+\gamma}{2}}_{0,s}$ these inequalities are
completely analogous.

%---------------------------------------

\subsection{An equivalent norm for the spaces  $C^{\gamma}_{s}(\overline{\Omega}^{\pm}_{T})$,
$C^{2+\gamma}_{s}(\overline{\Omega}^{\pm}_{T})$.} \label{ss2.1}

Along with the seminorm $H^{\gamma}_{s,R^{N}_{+T}}$ from \eqref{B3}
we consider in $R^{N}_{+T}$ the following weighted seminorm

\begin{equation}
H_{\alpha,R_{+T}^{N}}^{\gamma}(f)=\sup_{x,\overline{x}\in R_{+}^{N}}%
\frac{|f(x,t)-f(\overline{x},t)|}{|x-\overline{x}|^{\beta}}+\sup
_{x,\overline{x}\in R_{+}^{N}}\widetilde{x}_{N}^{\gamma\frac{\alpha}{2}}%
\frac{|f(x,t)-f(\overline{x},t)|}{|x-\overline{x}|^{\gamma}},\label{E2}%
\end{equation}
where $\widetilde{x}_{N}=\max\{x_{N},\overline{x}_{N}\}$, and here
and throughout we, without loss of generality, assume that
$\overline{x}_{N}\leq x_{N}$, so $\widetilde{x}_{N}=x_{N}$.

\begin{lemma}\label{LE1}
The seminorms $H_{\alpha,R_{+T}^{N}}^{\gamma}(f)$ and
$H_{s,R_{+T}^{N}}^{\gamma}(f)$ are equivalent.
\end{lemma}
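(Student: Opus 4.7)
The plan is to show the equivalence by direct manipulation of the defining quotients, keeping in mind the relation $\beta=\gamma(1-\alpha/2)$ from \eqref{B7}. Let me write $A(x,\overline{x})=x_{N}^{\alpha/2}+\overline{x}_{N}^{\alpha/2}+|x'-\overline{x}'|^{\alpha/2}$ so that $s(x,\overline{x})^{\gamma}=|x-\overline{x}|^{\gamma}/A^{\gamma}$, and use throughout the subadditivity inequality $(a+b)^{\gamma}\le a^{\gamma}+b^{\gamma}$ for $\gamma\in(0,1)$ together with the hypothesis $\overline{x}_{N}\le x_{N}$, so $\widetilde x_{N}=x_{N}$ and $\overline{x}_{N}^{\gamma\alpha/2}\le x_{N}^{\gamma\alpha/2}$.

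First I would prove $H^{\gamma}_{s,R^{N}_{+T}}\le CH^{\gamma}_{\alpha,R^{N}_{+T}}$. From subadditivity, $A^{\gamma}\le 2x_{N}^{\gamma\alpha/2}+|x'-\overline{x}'|^{\gamma\alpha/2}$, so
\[
\frac{|f(x,t)-f(\overline{x},t)|}{s(x,\overline{x})^{\gamma}}
\le 2x_{N}^{\gamma\alpha/2}\frac{|f(x,t)-f(\overline{x},t)|}{|x-\overline{x}|^{\gamma}}
+|x'-\overline{x}'|^{\gamma\alpha/2}\frac{|f(x,t)-f(\overline{x},t)|}{|x-\overline{x}|^{\gamma}}.
\]
The first term is bounded by the second sup in \eqref{E2}. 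In the second term, since $|x'-\overline{x}'|\le|x-\overline{x}|$, one has $|x'-\overline{x}'|^{\gamma\alpha/2}\le|x-\overline{x}|^{\gamma\alpha/2}$, which reduces the quotient to $|f(x,t)-f(\overline{x},t)|/|x-\overline{x}|^{\gamma-\gamma\alpha/2}=|f(x,t)-f(\overline{x},t)|/|x-\overline{x}|^{\beta}$, controlled by the first sup in \eqref{E2}.

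For the opposite direction $H^{\gamma}_{\alpha,R^{N}_{+T}}\le CH^{\gamma}_{s,R^{N}_{+T}}$, I would bound each of the two sups in \eqref{E2} separately by rewriting it in terms of $|x-\overline{x}|^{\gamma}/A^{\gamma}$. For the second sup, since $A\ge x_{N}^{\alpha/2}$, we have $x_{N}^{\gamma\alpha/2}/A^{\gamma}\le 1$, so $x_{N}^{\gamma\alpha/2}|f(x,t)-f(\overline{x},t)|/|x-\overline{x}|^{\gamma}\le|f(x,t)-f(\overline{x},t)|/s(x,\overline{x})^{\gamma}$. For the first sup, the needed estimate is $|x-\overline{x}|^{\gamma-\beta}\le CA^{\gamma}$, i.e.\ $|x-\overline{x}|^{\alpha/2}\le CA$, which follows from $|x-\overline{x}|^{\alpha/2}\le|x'-\overline{x}'|^{\alpha/2}+|x_{N}-\overline{x}_{N}|^{\alpha/2}$ together with $|x_{N}-\overline{x}_{N}|^{\alpha/2}\le x_{N}^{\alpha/2}\le A$.

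The argument is essentially bookkeeping based on the identity $\beta=\gamma(1-\alpha/2)$, the subadditivity of $t\mapsto t^{\gamma}$ on $[0,\infty)$, and the elementary comparison $|x-\overline{x}|\le|x'-\overline{x}'|+x_{N}+\overline{x}_{N}$; there is no real analytical difficulty, only the need to track which of $x_{N}^{\alpha/2}$ or $|x'-\overline{x}'|^{\alpha/2}$ dominates $A$ in each regime. The only point where one has to be mildly careful is in checking that the mixed term $|x'-\overline{x}'|^{\gamma\alpha/2}/|x-\overline{x}|^{\gamma}$ coming from the expansion of $A^{\gamma}$ is dominated by the first sup of \eqref{E2} rather than producing a new term, which is where the choice of the exponent $\beta$ is essential.
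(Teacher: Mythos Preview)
Your proof is correct and is, in fact, cleaner than the paper's own argument. Both proofs hinge on the identity $\gamma-\beta=\gamma\alpha/2$, but the paper proceeds by a case analysis: it fixes a small parameter $\varepsilon_{0}$ and splits according to whether $|x'-\overline{x}'|\gtrless\varepsilon_{0}x_{N}$ (and, in one subcase, whether $|x_{N}-\overline{x}_{N}|\gtrless\varepsilon_{0}x_{N}$), in each regime comparing $s(x,\overline{x})$ to either $x_{N}^{-\alpha/2}|x-\overline{x}|$ or $|x-\overline{x}|^{1-\alpha/2}$. You bypass all of this by writing $A=x_{N}^{\alpha/2}+\overline{x}_{N}^{\alpha/2}+|x'-\overline{x}'|^{\alpha/2}$ and applying the subadditivity of $t\mapsto t^{\gamma}$ and $t\mapsto t^{\alpha/2}$ on $[0,\infty)$: the bound $A^{\gamma}\le 2x_{N}^{\gamma\alpha/2}+|x'-\overline{x}'|^{\gamma\alpha/2}$ gives $H_{s}\le CH_{\alpha}$ in one line, and the converse reduces to the elementary inequality $|x-\overline{x}|^{\alpha/2}\le |x'-\overline{x}'|^{\alpha/2}+|x_{N}-\overline{x}_{N}|^{\alpha/2}\le A$. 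The advantage of your route is brevity and the absence of an auxiliary parameter; the paper's case split has the minor expository merit of making explicit which term in $A$ dominates in each geometric regime, but this is not needed for the lemma itself.
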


\begin{proof}
Let the seminorm $H_{s,R_{+T}^{N}}^{\gamma}(f)$ is finite. We show,
that then

\begin{equation}
H_{\alpha,R_{+T}^{N}}^{\gamma}(f)\leq CH_{s,R_{+T}^{N}}^{\gamma}(f).\label{E3}%
\end{equation}

Let $\varepsilon_{0}\in (0,1)$ is small and fixed. Let
$x=(x',x_{N})$ and let first

\begin{equation}
|x^{\prime}-\overline{x}^{\prime}|\geq\varepsilon_{0}x_{N}.\label{E4}%
\end{equation}
Then the more

\begin{equation}
|x-\overline{x}|\geq|x^{\prime}-\overline{x}^{\prime}|\geq\varepsilon_{0}%
x_{N}.\label{E5}%
\end{equation}
Under this condition

\[
s(x,\overline{x})=\frac{|x-\overline{x}|}{x_{N}^{\alpha/2}+\overline{x}%
_{N}^{\alpha/2}+|x^{\prime}-\overline{x}^{\prime}|^{\alpha/2}}\leq
\]%

\[
\leq C\frac{|x-\overline{x}|}{|x-\overline{x}|^{\alpha/2}+\overline{x}%
_{N}^{\alpha/2}+|x^{\prime}-\overline{x}^{\prime}|^{\alpha/2}}\leq
C|x-\overline{x}|^{1-\frac{\alpha}{2}}.
\]
Therefore, as $\beta=\gamma(1-\frac{\alpha}{2})$,

\begin{equation}
\frac{|f(x,t)-f(\overline{x},t)|}{|x-\overline{x}|^{\beta}}\leq
C\frac
{|f(x,t)-f(\overline{x},t)|}{s(x,\overline{x})^{\gamma}}\leq CH_{s,R_{+T}^{N}%
}^{\gamma}(f).\label{E6}%
\end{equation}

Besides, because of  \eqref{E5}, and then of \eqref{E6},

\begin{equation}
x_{N}^{\gamma\frac{\alpha}{2}}\frac{|f(x,t)-f(\overline{x},t)|}{|x-\overline
{x}|^{\gamma}}\leq\frac{x_{N}^{\gamma\frac{\alpha}{2}}}{(\varepsilon_{0}%
x_{N})^{\gamma\frac{\alpha}{2}}}\frac{|f(x,t)-f(\overline{x},t)|}%
{|x-\overline{x}|^{\gamma(1-\alpha/2)}}\leq
CH_{s,R_{+T}^{N}}^{\gamma
}(f).\label{E7}%
\end{equation}

Let now

\begin{equation}
|x^{\prime}-\overline{x}^{\prime}|\leq\varepsilon_{0}x_{N}.\label{E8}%
\end{equation}
Under this condition, as it easy to see,

\begin{equation}
s(x,\overline{x})\sim Cx_{N}^{-\frac{\alpha}{2}}|x-\overline{x}|.\label{E9}%
\end{equation}
Consequently,

\begin{equation}
x_{N}^{\gamma\frac{\alpha}{2}}\frac{|f(x,t)-f(\overline{x},t)|}{|x-\overline
{x}|^{\gamma}}\leq
C\frac{|f(x,t)-f(\overline{x},t)|}{s(x,\overline
{x})^{\gamma}}\leq CH_{s,R_{+T}^{N}}^{\gamma}(f).\label{E10}%
\end{equation}

To estimate, further, the unweighted H\"{o}lder constant in the
definition of $H_{\alpha,R_{+T}^{N}}^{\gamma}(f)$, we consider the
two cases.

If

\[
|x_{N}-\overline{x}_{N}|\geq\varepsilon_{0}x_{N},
\]
then

\[
|x-\overline{x}|\geq|x_{N}-\overline{x}_{N}|\geq\varepsilon_{0}x_{N}%
\]
and therefore, as it was above,

\[
s(x,\overline{x})\leq\frac{|x-\overline{x}|}{(|x-\overline{x}|/\varepsilon
_{0})^{\alpha/2}}\leq C|x-\overline{x}|^{1-\frac{\alpha}{2}},
\]
so that, as above,

\begin{equation}
\frac{|f(x,t)-f(\overline{x},t)|}{|x-\overline{x}|^{\beta}}\leq
C\frac
{|f(x,t)-f(\overline{x},t)|}{s(x,\overline{x})^{\gamma}}\leq CH_{s,R_{+T}^{N}%
}^{\gamma}(f).\label{E11}%
\end{equation}

If now, under the condition \eqref{E8}, we have

\begin{equation}
|x_{N}-\overline{x}_{N}|\leq\varepsilon_{0}x_{N},\label{E12}%
\end{equation}
then in this case

\begin{equation}
|x-\overline{x}|\leq|x^{\prime}-\overline{x}^{\prime}|+|x_{N}-\overline{x}%
_{N}|\leq2\varepsilon_{0}x_{N}.\label{E13}%
\end{equation}
Therefore, in the force of \eqref{E9},

\[
s(x,\overline{x})\leq Cx_{N}^{-\alpha/2}|x-\overline{x}|\leq
\]%

\begin{equation}
\leq
Cx_{N}^{-\alpha/2}(2\varepsilon_{0}x_{N})^{\alpha/2}|x-\overline
{x}|^{1-\alpha/2}=C|x-\overline{x}|^{1-\alpha/2}.\label{E14}%
\end{equation}
Consequently, in this case

\begin{equation}
\frac{|f(x,t)-f(\overline{x},t)|}{|x-\overline{x}|^{\beta}}\leq
C\frac
{|f(x,t)-f(\overline{x},t)|}{s(x,\overline{x})^{\gamma}}\leq CH_{s,R_{+T}^{N}%
}^{\gamma}(f).\label{E15}%
\end{equation}
The estimate \eqref{E3} follows now from \eqref{E6}, \eqref{E7},
\eqref{E10}, \eqref{E11} and \eqref{E15}.

Further, let now the seminorm $H_{\alpha,R_{+T}^{N}}^{\gamma}(f)$ is
finite. Let us prove  the following estimate

\begin{equation}
H_{s,R_{+T}^{N}}^{\gamma}(f)\leq CH_{\alpha,R_{+T}^{N}}^{\gamma}(f).\label{E16}%
\end{equation}

Let first

\begin{equation}
|x^{\prime}-\overline{x}^{\prime}|\leq\varepsilon_{0}x_{N},\quad
x_{N}>0.\label{E17}%
\end{equation}
Then

\[
s(x,\overline{x})\geq\nu\frac{|x-\overline{x}|}{x_{N}^{\alpha/2}},
\]
and consequently

\begin{equation}
\frac{|f(x,t)-f(\overline{x},t)|}{s(x,\overline{x})^{\gamma}}\leq
Cx_{N}^{\gamma\alpha/2}\frac{|f(x,t)-f(\overline{x},t)|}{|x-\overline
{x}|^{\gamma}}\leq CH_{\alpha,R_{+T}^{N}}^{\gamma}(f).\label{E18}%
\end{equation}
In the particular case $x_{N}=0$ we have $\overline{x}_{N}=0$ and
therefore

\[
s(x,\overline{x})=|x^{\prime}-\overline{x}^{\prime}|^{1-\alpha/2}%
=|x-\overline{x}|^{1-\alpha/2},
\]
and so again

\begin{equation}
\frac{|f(x,t)-f(\overline{x},t)|}{s(x,\overline{x})^{\gamma}}=\frac
{|f(x,t)-f(\overline{x},t)|}{|x-\overline{x}|^{\beta}}\leq
CH_{\alpha
,R_{+T}^{N}}^{\gamma}(f).\label{E19}%
\end{equation}

Let now we have

\begin{equation}
|x^{\prime}-\overline{x}^{\prime}|\geq\varepsilon_{0}x_{N}.\label{E20}%
\end{equation}
Then

\begin{equation}
s(x,\overline{x})\geq\nu\frac{|x-\overline{x}|}{|x^{\prime}-\overline
{x}^{\prime}|^{\alpha/2}}\geq\nu|x-\overline{x}|^{1-\alpha/2},\label{E21}%
\end{equation}
and consequently,

\begin{equation}
\frac{|f(x,t)-f(\overline{x},t)|}{s(x,\overline{x})^{\gamma}}\leq
C\frac{|f(x,t)-f(\overline{x},t)|}{|x-\overline{x}|^{\beta}}\leq
CH_{\alpha,R_{+T}^{N}}^{\gamma}(f).\label{E22}%
\end{equation}

Thus, \eqref{E16} follows from  \eqref{E18}, \eqref{E19},
\eqref{E21}. And so the equivalence of the seminorms
$H_{s,R_{+T}^{N}}^{\gamma}(f)$ and
$H_{\alpha,R_{+T}^{N}}^{\gamma}(f)$ is proved.
\end{proof}

In this way, the norm in the space $C^{\gamma}_{s}(R^{N}_{+T})$ may
be given in the form

\begin{equation}
|u|_{C^{\gamma,\gamma/2}_{s}(R^{N}_{+T})}=|u|^{(0)}_{R^{N}_{+T}}+H_{\alpha,R_{+T}^{N}}^{\gamma}(u)
+\langle u\rangle^{\gamma/2}_{t,R^{N}_{+T}}.
\label{E23}%
\end{equation}

Bearing in mind the local straightening of the boundary $\Gamma$,
for the case of arbitrary domains $\overline{\Omega}_{T}^{\pm}$, the
norm in the space
$C^{\gamma,\gamma/2}_{s}(\overline{\Omega}_{T}^{\pm})$ may be
explicitly written as

\begin{equation}
|u|_{C^{\gamma}_{s}(\overline{\Omega}_{T}^{\pm})}=|u|^{(0)}_{\overline{\Omega}_{T}^{\pm}}+
\langle u
\rangle^{(\beta)}_{x,\overline{\Omega}_{T}^{\pm}}+\sup_{x,\overline{x}\in
\overline{\Omega}^{\pm}}(\widetilde{d^{\pm}(x,\overline{x})})^{\gamma
\alpha/2}\frac{|u(x,t)-u(\overline{x},t)|}{|x-\overline{x}|^{\gamma}}+
\langle u\rangle^{\gamma/2}_{t,\overline{\Omega}^{\pm}_{T}},
\label{E24}%
\end{equation}
where the functions $d^{\pm}(x)$ were introduced in the previous
section in \eqref{1.13} and they model the distance to the boundary
$\Gamma$, $\widetilde{d^{\pm}(x,\overline{x})}=\max
\{d^{\pm}(x),d^{\pm}(\overline{x})\}$.

Quite similar in terms of \eqref{E24} and \eqref{B6} we may
explicitly define the norm  \eqref{B5} in the space
$C^{2+\gamma}_{s}(\overline{\Omega}_{T}^{\pm})$.
%--------------------------------------------------------------------------

\subsection{The traces of the functions from $C^{2+\gamma,\frac{2+\gamma}{2}}_{s}(\overline{\Omega}^{\pm}_{T})$
on $\Gamma_{T}$.}\label{ss2.2}

In view of the smoothness of the surface $\Gamma$, we can use local
straightening of the surface at consideration locally defined
classes
$C^{2+\gamma,\frac{2+\gamma}{2}}_{s}(\overline{\Omega}^{\pm}_{T})$.
Therefore it is sufficient to consider the case of the half-space,
that is to consider the finite in $R^{N}_{+T}=R^{N}_{+}\times [0,T]$
function $u(x,t)$ from the space
$C^{2+\gamma,\frac{2+\gamma}{2}}_{s}(R^{N}_{+T})$ and to consider
its trace at $x_{N}=0$.

\begin{lemma} \label{LSl1}
Let the function $u(x,t)$ is finite and $u(x,t)\in
C^{2+\gamma,\frac{2+\gamma}{2}}(R_{+T}^{N})$ , $0<\gamma<\alpha$,
$\beta=\gamma(1-\alpha/2)$. Then the function
 $v(x^{\prime},t)=u(x^{\prime},0,t)\in C^{2+\beta-\alpha,\frac
{2+\beta-\alpha}{2-\alpha}}(R_{T}^{N-1})=C^{2+\beta-\alpha,\frac{2+\gamma}{2}%
}(R_{T}^{N-1})$, at that

\begin{equation}
\left|  v(x^{\prime},t)\right|
_{C^{2+\beta-\alpha,\frac{2+\beta-\alpha
}{2-\alpha}}(R_{T}^{N-1})}=\left|  u(x^{\prime},0,t)\right|
_{C^{2+\beta
-\alpha,\frac{2+\beta-\alpha}{2-\alpha}}(R_{T}^{N-1})}\leq C\left|
u\right|
_{s,R_{+T}^{N}}^{(2+\gamma)}.\label{Sl.1}%
\end{equation}
Besides,

\begin{equation}
|\nabla_{(x',x_{N})} u(x^{\prime},0,t)|_{C^{1+\beta-\alpha,\frac{1+\beta-\alpha}{2-\alpha}%
}(R_{T}^{N-1})}\leq C_{T}|u|_{s,R_{+T}^{N}}^{(2+\gamma)}.\label{SlA.1}%
\end{equation}

\end{lemma}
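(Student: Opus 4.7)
The plan is to reduce the lemma to trace estimates on the half-space $R^{N}_{+T}$ using the equivalent norm of Lemma \ref{LE1}, which disentangles the unweighted $\beta$-Hölder and the weighted $\gamma$-Hölder pieces of $C^{\gamma,\gamma/2}_{s}$. First I would dispose of the zeroth- and time-derivative contributions: the unweighted part of the $H^{\gamma}_{\alpha}$ characterisation yields immediately $|v(x',t) - v(\bar x',t)| \le C|x'-\bar x'|^{\beta}$ at $x_{N}=0$, and the $\gamma/2$-Hölder constant in $t$ is inherited from $u$ unchanged. Applying the same observation to $u_{t}\in C^{\gamma,\gamma/2}_{s}$ supplies the required regularity of $v_{t}$; the identity $(2+\beta-\alpha)/(2-\alpha) = 1+\gamma/2$ confirms that the inherited $\gamma/2$-Hölder in $t$ matches the order of time differentiability in the target class.

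The heart of the proof is the Hölder bound for the boundary gradient. For a tangential index $i<N$, I would write, for a parameter $h>0$ to be chosen,
\[
v_{x_{i}}(x',t) - v_{x_{i}}(\bar x',t) = \bigl[u_{x_{i}}(x',h,t) - u_{x_{i}}(\bar x',h,t)\bigr] - \int_{0}^{h}\bigl[u_{x_{i}x_{N}}(x',s,t) - u_{x_{i}x_{N}}(\bar x',s,t)\bigr]\,ds,
\]
take $h\sim|x'-\bar x'|$ and estimate the two pieces separately. The integral is bounded by splitting the interval at $s=|x'-\bar x'|$: for $s\le|x'-\bar x'|$ the unweighted $\beta$-Hölder part of $x_{N}^{\alpha}u_{x_{i}x_{N}}\in C^{\gamma}_{s}$ gives integrand $\le Cs^{-\alpha}|x'-\bar x'|^{\beta}$, while for $|x'-\bar x'|\le s\le h$ the weighted factor $\widetilde{x}_{N}^{\gamma\alpha/2}$ in the $H^{\gamma}_{\alpha}$ norm yields $C|x'-\bar x'|^{\gamma}s^{-\alpha-\gamma\alpha/2}$; both regimes integrate to $C|x'-\bar x'|^{1+\beta-\alpha}$ via the identity $1+\beta-\alpha = (1-\alpha)+\beta = (1-\alpha-\gamma\alpha/2)+\gamma$. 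The normal boundary derivative $v_{x_{N}}=u_{x_{N}}|_{x_{N}=0}$ is handled by the same scheme with $u_{x_{N}x_{N}}$ replacing $u_{x_{i}x_{N}}$.

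The main obstacle is controlling the boundary contribution $u_{x_{i}}(x',h,t) - u_{x_{i}}(\bar x',h,t)$ at $h=|x'-\bar x'|$ by $C|x'-\bar x'|^{1+\beta-\alpha}$ rather than the naive $C|x'-\bar x'|^{\beta}$ that comes from the Hölder seminorm of $u_{x_{i}}$ alone. This requires combining the unweighted $\beta$-Hölder bound on $u_{x_{i}}$ with the Lipschitz bound $|u_{x_{i}x_{j}}(\cdot,h,\cdot)|\le Ch^{-\alpha}$ produced by the $L^{\infty}$-part of $x_{N}^{\alpha}u_{x_{i}x_{j}}\in C^{\gamma}_{s}$, through an anisotropic interpolation at the intermediate spatial scale, or alternatively via a second-order Taylor decomposition in $x'$ at height $h$ that exploits the $\gamma$-Hölder continuity of $x_{N}^{\alpha}u_{x_{i}x_{j}}$ in the $s$-metric; this delicate balance between the weighted and unweighted structure is the technical crux of the lemma. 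Once the space estimate \eqref{Sl.1} is established, the time-Hölder exponent $(1+\beta-\alpha)/(2-\alpha)$ of $\nabla u|_{x_{N}=0}$ in \eqref{SlA.1} follows from the resulting mixed space-time regularity by the standard anisotropic interpolation \eqref{Sol.1}.
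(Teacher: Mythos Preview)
Your decomposition via first differences
\[
v_{x_i}(x',t)-v_{x_i}(\bar x',t)=\bigl[u_{x_i}(x',h,t)-u_{x_i}(\bar x',h,t)\bigr]-\int_0^h\bigl[u_{x_ix_N}(x',s,t)-u_{x_ix_N}(\bar x',s,t)\bigr]\,ds
\]
handles the integral correctly, but the ``main obstacle'' you flag is a genuine gap that your proposed remedies do not close. At height $h\sim r:=|x'-\bar x'|$, the only bounds available for the boundary bracket are the $\beta$-H\"older estimate of $u_{x_i}$ (giving $Cr^{\beta}$) and the Lipschitz estimate coming from $|u_{x_ix_j}|\le Ch^{-\alpha}$ (giving $Cr^{1-\alpha}$). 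Interpolation between $C^{\beta}$ with seminorm $O(1)$ and $C^{1+\beta}$ (or $C^{1+\gamma}$) with seminorm $O(h^{-\alpha})$ yields a $C^{1+\beta-\alpha}$ seminorm that still blows up like a negative power of $h$; no choice of $h$ makes both the boundary and the integral term $O(r^{1+\beta-\alpha})$. The second-order Taylor expansion in $x'$ does place the remainder at the right order $r^{1+\beta-\alpha}$, but the linear part $\nabla_{x'}u_{x_i}(\bar x',h)\cdot(x'-\bar x')$ is of size $Ch^{-\alpha}r=Cr^{1-\alpha}$ and does not cancel against anything.

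The paper avoids this obstruction by working with the \emph{second} tangential difference $\Delta_{h,x'}^{2}u_{x_i}$ and invoking the characterisation of $C^{1+\beta-\alpha}$ by second differences (Golovkin \cite{18}). The point is that $\Delta_{h,x'}^{2}u_{x_i}$, via the mean value theorem, becomes a \emph{difference} of values of $u_{x_ix_j}$, so one gains the extra factor $h^{\beta}$ from the H\"older continuity of $x_N^{\alpha}u_{x_ix_j}$, not merely its size: at height $x_N\ge h$ one gets $|\Delta_{h,x'}^{2}u_{x_i}|\le Ch^{1+\beta}x_N^{-\alpha}\le Ch^{1+\beta-\alpha}$. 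For $x_N<h$ the same lifting trick you use (pass to height $x_N+h$) applies, but now both the lifted term and the integral correction carry the second-difference structure and therefore the crucial extra $h^{\beta}$.

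A second, smaller gap: deducing the time-H\"older exponent $(1+\beta-\alpha)/(2-\alpha)$ for $\nabla u|_{x_N=0}$ from \eqref{Sol.1} works for the tangential derivatives (they are derivatives of the trace $v$), but not for the normal derivative $u_{x_N}|_{x_N=0}$, which is not obtained by differentiating $v$. The paper proves \eqref{SlA.1} for $u_{x_N}$ by a separate argument, again via second differences in $t$ combined with the lifting-by-$h^{1/(2-\alpha)}$ trick.
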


\begin{proof}
It follows directly from the definition of the space
$C^{2+\gamma,\frac{2+\gamma}{2}}(R_{+T}^{N})$
 в \eqref{B5} and from the lemma  \ref{LE1}, that  $u_{t}(x^{\prime
},0,t)\in C^{\beta,\gamma/2}(R_{T}^{N-1})$, and therefore
$v_{t}(x^{\prime},0)=u_{t}(x^{\prime},0,t) \in C^{\beta,\gamma/2}
(R_{T}^{N-1})$, and in addition
\begin{equation}
\left|  v_{t}\right|  _{C^{\beta,\gamma/2}(R_{T}^{N-1})}\leq
C\left|
u\right|  _{s,R_{+T}^{N}}^{(2+\gamma)}.\label{Sl.2}%
\end{equation}
Therefore, in the force of \eqref{Sol.1} (see \cite{15}), it is
sufficient to prove uniformly in $t$ the following estimate

\begin{equation}
\left|  v(\cdot,t)\right|  _{R^{N-1}}^{(2+\beta-\alpha)}\leq
C\left|
u\right|  _{s,R_{+T}^{N}}^{(2+\gamma)},\label{Sl.3}%
\end{equation}
and for this it is sufficient to prove, that uniformly in $t$ and in
$x_{N}$ for all $i=\overline{1,N}$ for the function $w=u_{x_{i}}$ we
have

\begin{equation}
\left\langle w\right\rangle
_{x^{\prime},R^{N-1}}^{(1+\beta-\alpha)}\leq
C\left|  u\right|  _{s,R_{+T}^{N}}^{(2+\gamma)}\label{Sl.4}%
\end{equation}
So, let $w=u_{x_{i}}$, $i=\overline{1,N}$. To prove \eqref{Sl.4} it
is sufficient, as it follows from \cite{18}, to show that for
arbitrary $h>0$ the follows inequality holds

\begin{equation}
\frac{\left|  \Delta_{h,x^{\prime}}^{2}w(x_{N})\right|  }{h^{1+\beta-\alpha}%
}\leq C\left|  u\right|  _{s,R_{+T}^{N}}^{(2+\gamma)}.\label{Sl.5}%
\end{equation}
Here $\Delta_{h,x^{\prime}}^{2}w(x_{N})\equiv\Delta_{h,x^{\prime}}%
^{2}w\equiv\Delta_{h,x^{\prime}}^{2}w(x^{\prime},x_{N},t)$ is the
second difference from the function $w$ with respect to the variable
$x_{j}^{\prime}$, $j=\overline{1,N-1}$, with the step $h$, that is

\begin{equation}
\Delta_{h,x^{\prime}}^{2}w(x_{N})=u(x^{\prime}+\overrightarrow{j}%
h,x_{N},t)-2u(x^{\prime},x_{N},t)+u(x^{\prime}-\overrightarrow{j}%
h,x_{N},t).\label{Sl.6}%
\end{equation}

Consider the two cases.  Let first

\begin{equation}
h\leq x_{N}.\label{Sl.7}%
\end{equation}
Then according to the mean value theorem with some $\theta_{1},
\theta_{2}\in (0,1)$ we have

\[
\frac{\left|  \Delta_{h,x^{\prime}}^{2}w(x_{N})\right|  }{h^{1+\beta-\alpha}%
}\leq\frac{\left|
x_{N}^{\alpha}\Delta_{h,x^{\prime}}^{2}w(x_{N})\right|
}{h^{1+\beta}}=
\]%

\[
=\left|
x_{N}^{\alpha}\frac{w_{x_{j}}(x^{\prime}+\overrightarrow{j}\theta
_{1}h,x_{N},t)-w_{x_{j}}(x^{\prime}-\overrightarrow{j}\theta_{2}h,x_{N}%
,t)}{h^{\beta}}\right|  =
\]%

\[
=\frac{\left|
x_{N}^{\alpha}u_{x_{i}x_{j}}(x^{\prime}+\overrightarrow
{j}\theta_{1}h,x_{N},t)-x_{N}^{\alpha}u_{x_{i}x_{j}}(x^{\prime}%
+\overrightarrow{j}\theta_{2}h,x_{N},t)\right|  }{h^{\beta}}\leq
\]%

\begin{equation}
\leq C\left\langle x_{N}^{\alpha}u_{x_{i}x_{j}}\right\rangle
_{x^{\prime },R_{+T}^{N}}^{(\beta)}\leq C\left|  u\right|
_{s,R_{+T}^{N}}^{(2+\gamma
)}.\label{Sl.8}%
\end{equation}

Let now $h\geq x_{N}$. Write the difference
$\Delta_{h,x^{\prime}}^{2}w(x_{N})$ in the form

\[
\Delta_{h,x^{\prime}}^{2}w(x_{N})=-\Delta_{h,x^{\prime}}^{2}\left(
w(x^{\prime},x_{N}+h,t)-w(x^{\prime},x_{N}+h,t)\right)  +
\]%

\begin{equation}
+\Delta_{h,x^{\prime}}^{2}w(x^{\prime},x_{N}+h,t)\equiv
A_{1}+A_{2}.\label{Sl.9}
\end{equation}
In view of the fact that for the expression $A_{2}$ the condition
\eqref{Sl.7} holds, that is $h\leq x_{N}+h$, completely analogous to
\eqref{Sl.8},

\begin{equation}
\frac{\left|  A_{2}\right|  }{h^{1+\beta-\alpha}}\leq C\left|
u\right|
_{s,R_{+T}^{N}}^{(2+\gamma)}.\label{Sl.10}%
\end{equation}

To estimate the expression $|A_{1}|/h^{1+\beta-\alpha}$ we use the
formula

\[
w(x^{\prime},x_{N}+h,t)-w(x^{\prime},x_{N},t)=h\int\limits_{0}^{1}w_{x_{N}%
}(x^{\prime},x_{N}+\theta h,t)d\theta.
\]
Consequently, in view of $w_{x_{N}}=u_{x_{i}x_{N}}$,

\[
\frac{\left|  A_{1}\right|  }{h^{1+\beta-\alpha}}\leq\int\limits_{0}^{1}%
\frac{h^{\alpha}}{(x_{N}+\theta h)^{\alpha}}\left|
\frac{\Delta_{h,x^{\prime }}^{2}(x_{N}+\theta
h)^{\alpha}u_{x_{i}x_{N}}(x^{\prime},x_{N}+\theta
h,t)}{h^{\beta}}\right|  d\theta\leq
\]%

\begin{equation}
\leq C\left\langle x_{N}^{\alpha}u_{x_{i}x_{N}}\right\rangle
_{x^{\prime },R_{+T}^{N}}^{(\beta)}\int\limits_{0}^{1}\left(
x_{N}/h+\theta\right) ^{-\alpha}d\theta\leq C\left|  u\right|
_{s,R_{+T}^{N}}^{(2+\gamma
)}.\label{Sl.11}%
\end{equation}

Thus, from  \eqref{Sl.10} and \eqref{Sl.11} we obtain \eqref{Sl.5},
and so we have also \eqref{Sl.4} and \eqref{Sl.3}. Together with
\eqref{Sl.2} this completes the proof of \eqref{Sl.1}.
%---------------------------------------------------------------

We now show the inequality \eqref{SlA.1}. Note that for tangential
derivatives $u_{x_{i}}$, $i=\overline{1,N-1}$ this inequality
follows from the above estimate \eqref{Sl.1} and from \cite{15},
\eqref{Sol.1}. However, for $u_{x_{N}}$ we need a separate proof. We
show \eqref{SlA.1} for $u_{x_{k}}$, $k=\overline{1,N}$.

According to \cite{18}, it is enough to show that for $h>0$

\begin{equation}
|\Delta_{h,t}^{2}u_{x_{k}}(x,t)|\leq
C|u|_{s,R_{+T}^{N}}^{(2+\gamma)}
h^{\frac{1+\beta-\alpha}{2-\alpha}},\label{SlA.2}
\end{equation}
where

\[
\Delta_{h,t}^{2}v(x,t)=\Delta_{h,t}^{2}v=v(x,t+2h)-2v(x,t+h)+v(x,t)
\]
is the second difference of a function $v$ with respect to the
variable $t$ with the step $h$.

Let first

\begin{equation}
h^{\frac{1}{2-\alpha}}\leq x_{N}.\label{SlA.2.1}%
\end{equation}
Then we use the following interpolation inequality (see, for
example, \cite{15}, \cite{18.1}, Ch.1)

\[
|v_{x_{k}}|_{\Pi_{T}(x_{N})}^{(0)}\leq C\left(  |v|_{\Pi_{T}(x_{N})}%
^{(0)}\right)  ^{1/2}\left(  |v|_{\Pi_{T}(x_{N})}^{(2)}\right)
^{1/2}.
\]
Here we denote $\Pi_{T}(x_{N})=\left\{
(y,t):x_{N}/2<y_{N}<3x_{N}/2,0<t<T\right\}$, and
$|v|_{\Pi_{T}(x_{N})}^{(2)}$ means $C^{2}$- norm with respect to
$x$- variables  over the specified domain. We obtain for
$\Delta_{h,t}^{2}u_{x_{k}}(x,t)$, that

\begin{equation}
|\Delta_{h,t}^{2}u_{x_{k}}(x,t)|\leq C\left(
|\Delta_{h,t}^{2}u|_{\Pi
_{T}(x_{N})}^{(0)}\right)  ^{1/2}\left(  |\Delta_{h,t}^{2}u|_{\Pi_{T}(x_{N}%
)}^{(2)}\right)  ^{1/2}.\label{SlA.3}%
\end{equation}
In view of the properties of the space
$C_{s}^{2+\gamma,\frac{2+\gamma}{2}}(R_{+T}^{N})$ (см. \cite{15},
\cite{18}, \eqref{Sol.1}), we have

\begin{equation}
|\Delta_{h,t}^{2}u|_{\Pi_{T}(x_{N})}^{(0)}\leq C|u|_{s,R_{+T}^{N}}%
^{(2+\gamma)}h^{\frac{2+\gamma}{2}},\label{SlA.4}%
\end{equation}%

\begin{equation}
|\Delta_{h,t}^{2}u|_{\Pi_{T}(x_{N})}^{(2)}\leq C|u|_{s,R_{+T}^{N}}%
^{(2+\gamma)}x_{N}^{-\alpha}h^{\frac{\gamma}{2}}.\label{SlA.5}%
\end{equation}
From \eqref{SlA.4} and \eqref{SlA.5} considering \eqref{SlA.2.1}, we
obtain

\begin{equation}
|\Delta_{h,t}^{2}u_{x_{k}}(x,t)|\leq C|u|_{s,R_{+T}^{N}}^{(2+\gamma)}%
h^{\frac{2+\gamma}{4}+\frac{\gamma}{4}-\frac{\alpha}{2(2-\alpha)}%
}=C|u|_{s,R_{+T}^{N}}^{(2+\gamma)}h^{\frac{1+\beta-\alpha}{(2-\alpha)}%
},\label{SlA.6}%
\end{equation}
that is the inequality \eqref{SlA.2}.

Let now

\begin{equation}
h^{\frac{1}{2-\alpha}}\geq x_{N}.\label{SlA.7}%
\end{equation}
Write $\Delta_{h,t}^{2}u_{x_{k}}(x,t)$ as

\[
\Delta_{h,t}^{2}u_{x_{k}}(x,t)=-\Delta_{h,t}^{2}\left[
u_{x_{k}}(x^{\prime
},x_{N}+h^{\frac{1}{2-\alpha}},t)-u_{x_{k}}(x,t)\right]  +
\]%

\begin{equation}
+\Delta_{h,t}^{2}u_{x_{k}}(x^{\prime},x_{N}+h^{\frac{1}{2-\alpha}},t)\equiv
A_{1}+A_{2},\label{SlA.8}%
\end{equation}
and for $A_{2}$, by the above case \eqref{SlA.2.1}, the estimate

\[
|A_{2}|\leq C|u|_{s,R_{+T}^{N}}^{(2+\gamma)}h^{\frac{1+\beta-\alpha}%
{(2-\alpha)}}
\]
is valid.

To estimate the expression $A_{1}$, write it as

\[
A_{1}=-h^{\frac{1}{2-\alpha}}\int\limits_{0}^{1}\Delta_{h,t}^{2}u_{x_{k}x_{N}%
}(x^{\prime},x_{N}+\theta h^{\frac{1}{2-\alpha}},t)d\theta.
\]
Thus, we have for $A_{1}$, that

\[
|A_{1}|\leq C|u|_{s,R_{+T}^{N}}^{(2+\gamma)}h^{\frac{1}{2-\alpha}}%
h^{\frac{\gamma}{2}}\int\limits_{0}^{1}\left(  x_{N}+\theta
h^{\frac {1}{2-\alpha}}\right)  ^{-\alpha}d\theta\leq
\]%

\[
\leq C|u|_{s,R_{+T}^{N}}^{(2+\gamma)}h^{\frac{1}{2-\alpha}+\frac{\gamma}%
{2}-\frac{\alpha}{2-\alpha}}\int\limits_{0}^{1}\theta^{-\alpha}d\theta
=C|u|_{s,R_{+T}^{N}}^{(2+\gamma)}h^{\frac{1+\beta-\alpha}{(2-\alpha)}},
\]
that is again the inequality \eqref{SlA.2}.

The lemma is proved.

%--------------------------------------------------------------
 \end{proof}

Thus, due to the possibility of the local straightening of the
boundary, the following is true.

\begin{lemma}\label{LSl2}
Let functions $u^{\pm}(x,t)$ belong to the spaces
$C^{2+\gamma,\frac{2+\gamma}{2}}_{s}(\overline{\Omega}^{\pm}_{T})$.
Then the functions $v^{\pm}(x,t)=u^{\pm}(x,t)|_{x\in \Gamma}$
 belong to the space
$C^{2+\beta-\alpha,\frac{2+\beta-\alpha}{2-\alpha}}(\Gamma_{T})$,
and

\begin{equation}
|v^{\pm}|_{C^{2+\beta-\alpha,\frac{2+\beta-\alpha}{2-\alpha}}(\Gamma_{T})}
 \leq C\left|  u\right| _{s,\overline{\Omega}^{\pm}_{T}}^{(2+\gamma
)}.\label{Sl.12}%
\end{equation}
Besides,

\begin{equation}
|\nabla
u^{\pm}|_{\Gamma_{T}}|_{C^{1+\beta-\alpha,\frac{1+\beta-\alpha
}{2-\alpha}}(\Gamma_{T})}\leq C|u^{\pm}|_{s,\overline{\Omega}_{T}^{\pm}%
}^{(2+\gamma)}.\label{SlA.9}%
\end{equation}

\end{lemma}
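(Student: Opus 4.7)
The plan is to reduce Lemma \ref{LSl2} to the half-space statement of Lemma \ref{LSl1} by means of the local straightening of $\Gamma$ that was already used to define the spaces $C^{2+\gamma,\frac{2+\gamma}{2}}_{s}(\overline{\Omega}^{\pm}_{T})$. Since $\Gamma \in H^{4+\gamma}$ and is compact, I would fix a finite cover of a neighborhood of $\Gamma$ in $\overline{\Omega}^{\pm}$ by coordinate patches $U_k$, each equipped with a diffeomorphism $\Phi_k: U_k \to V_k \subset R^N_+$ flattening the piece $\Gamma \cap \overline{U_k}$ onto $\{x_N = 0\}$. Choose a subordinate partition of unity $\{\zeta_k\}$ so that each $\zeta_k$ is compactly supported in $U_k$ and $\sum \zeta_k \equiv 1$ near $\Gamma$, and let $\zeta_0$ be supported away from $\Gamma$.

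For each boundary patch I set $u_k(x,t) = (\zeta_k u^{\pm})\circ \Phi_k^{-1}(x,t)$; by the very definition of $C^{2+\gamma,\frac{2+\gamma}{2}}_{s}(\overline{\Omega}^{\pm}_{T})$ (see \eqref{B6} and the local definition following it), $u_k$ is a compactly supported function in $R^N_{+T}$ lying in $C^{2+\gamma,\frac{2+\gamma}{2}}_{s}(R^N_{+T})$ with norm controlled by $C|u^{\pm}|^{(2+\gamma)}_{s,\overline{\Omega}^{\pm}_{T}}$, because $d^{\pm}(y)$ is comparable to the straightened $x_N$ coordinate under $\Phi_k$ (cf.\ \eqref{1.13}) and the Jacobian of $\Phi_k$ is of class $H^{3+\gamma}$. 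Lemma \ref{LSl1} then gives $u_k(x',0,t) \in C^{2+\beta-\alpha,\frac{2+\beta-\alpha}{2-\alpha}}(R^{N-1}_T)$ together with estimate \eqref{Sl.1}, and likewise $\nabla_{(x',x_N)} u_k(x',0,t)$ lies in $C^{1+\beta-\alpha,\frac{1+\beta-\alpha}{2-\alpha}}(R^{N-1}_T)$ with the estimate \eqref{SlA.1}. Pulling back to $\Gamma$ by $\Phi_k$ (whose restriction to $\{x_N=0\}$ is a smooth diffeomorphism onto its image in $\Gamma$) and summing over $k$ one obtains the required membership $v^{\pm} \in C^{2+\beta-\alpha,\frac{2+\beta-\alpha}{2-\alpha}}(\Gamma_T)$ and \eqref{Sl.12}; the contribution from $\zeta_0 u^{\pm}$ away from $\Gamma$ trivially satisfies a stronger estimate since the weight in \eqref{B5} is bounded below there.

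For the gradient bound \eqref{SlA.9} the only point that does not follow automatically from \eqref{Sl.12} via \eqref{Sol.1} is the normal component: tangential derivatives along $\Gamma$ are already controlled by differentiating $v^{\pm}$ on the surface, but the transverse derivative $\partial u^{\pm}/\partial \vec n$ is \emph{not} the derivative of the trace $v^{\pm}$. However, after flattening the patch $U_k$, $\partial u^{\pm}/\partial \vec n$ corresponds (modulo smooth coefficients depending on the chart) to $u_{k,x_N}(x',0,t)$, and Lemma \ref{LSl1} supplies precisely the inequality \eqref{SlA.1} for $\partial u_k /\partial x_N$ at $x_N=0$. Combining tangential and normal components, transferring back by $\Phi_k$, and summing, yields \eqref{SlA.9}.

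The main obstacle I expect is bookkeeping rather than substance: one must check that the equivalence of the intrinsic $s$-metric on $\overline{\Omega}^{\pm}$ with the half-space $s$-metric survives the change of coordinates $\Phi_k$, i.e.\ that the distance function $s(x,\overline x)$ defined via $d^{\pm}$ is comparable under $\Phi_k$ to the half-space $s$-distance in \eqref{B2}. This is where the regularity $\Gamma \in H^{4+\gamma}$ and the normalization \eqref{1.13} of $d^{\pm}$ enter; once that comparability is recorded (using $\alpha/2 < 1$ so that fractional powers behave well), the transfer of the seminorms $H^\gamma_s$ and the weighted second-derivative estimate \eqref{B6} is routine, and the two inequalities \eqref{Sl.12}, \eqref{SlA.9} follow directly from the corresponding half-space assertions of Lemma \ref{LSl1}.
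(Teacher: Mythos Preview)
Your proposal is correct and follows exactly the route the paper itself takes: the paper derives Lemma~\ref{LSl2} from Lemma~\ref{LSl1} in a single sentence (``due to the possibility of the local straightening of the boundary''), and what you have written is simply a careful expansion of that remark via partition of unity and charts. The observation that the normal derivative needs the separate estimate \eqref{SlA.1} rather than just \eqref{Sl.1} is also the point of the paper's inclusion of \eqref{SlA.1} in Lemma~\ref{LSl1}.
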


\subsection{An extension from the surface $ \Gamma_{T}$ of the functions from the space
 $C^{2+\beta-\alpha,\frac{2+\beta-\alpha}{2-\alpha}}(\Gamma_{T})$. } \label{ss2.3}

In this section we prove the converse of Lemma \ref{LSl2}, that is,
we show that any function of the class
$C^{2+\beta-\alpha,\frac{2+\beta-\alpha}{2-\alpha}}(\Gamma_{T})=C^{2+\beta
-\alpha,1+\gamma/2}(\Gamma_{T})$ can be extended to all domains
$\overline{\Omega}_{T}^{\pm}$ can be extended to all region up to
functions of the class
$C^{2+\gamma,\frac{2+\gamma}{2}}_{s}(\overline{\Omega}_{T}^{\pm})$,
 and the extension operator is bounded (here, as above
$\beta=\gamma (1-\alpha/2)$). Such an extension operator is
constructed in the standard way by applying a sufficiently small
partition of the unity in the neighborhood of $\Gamma$ and by the
local straightening of the boundary $\Gamma$ - see \cite{12}. In
this case, it is enough to require the $H^{2+\gamma}$-smoothness of
the boundary $\Gamma$. Therefore, the existence of the said
extension operator follows in the standard way from the following
lemma.

\begin{lemma} \label{LP1}
Let in $R_{+T}^{N}$ at $x_{N}=0$ a finite function $f(x^{\prime},t)$
from the class $C^{2+\beta-\alpha,1+\gamma/2}(R_{T}^{N-1})$ is
given. Then $f$ can be extended in the domain  $x_{N}>0$ up to the
function  $u(x,t)$ from the class
$C_{s}^{2+\gamma,\frac{2+\gamma}{2}}(R_{+T}^{N})$, and

\begin{equation}
\left|  u\right|  _{s,R_{+T}^{N}}^{(2+\gamma)}\leq C\left|
f\right|
_{C^{2+\beta-\alpha,1+\gamma/2}(R_{T}^{N-1})}.\label{P.1}%
\end{equation}

\end{lemma}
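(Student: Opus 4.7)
The plan is to construct $u$ explicitly as a tangential mollification of $f$ at a scale proportional to the distance $x_N$ to the boundary. Fix a function $\phi\in C_0^{\infty}(R^{N-1})$ with $\int\phi\,dy'=1$ and vanishing first moments $\int y_i'\phi(y')\,dy'=0$, and set
\begin{equation*}
u(x',x_N,t)=\int_{R^{N-1}}\phi(y')\,f(x'+x_N y',t)\,dy',\qquad x_N>0,
\end{equation*}
with $u(x',0,t)=f(x',t)$. Since $2+\beta-\alpha\in(1,2)$, the hypothesis $f\in C^{2+\beta-\alpha,1+\gamma/2}(R_T^{N-1})$ means $\nabla_{x'}f$ is H\"older continuous in $x'$ of exponent $1+\beta-\alpha\in(0,1)$, and interpolation \eqref{Sol.1} applied with $k_1=0$, $k_2=1$ gives $f_t\in C^{\beta,\gamma/2}(R_T^{N-1})$.

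First I would verify the pointwise terms of the norm \eqref{B5}. The quantities $|u|^{(0)}$, $|u_{x_i'}|^{(0)}$ ($i<N$), and $|u_t|^{(0)}$ are controlled immediately by $|f|$, $|\nabla_{x'}f|$, and $|f_t|$ together with $\int\phi=1$. For $u_{x_N}$ the vanishing first moments of $\phi$ yield
\begin{equation*}
u_{x_N}(x,t)=\int\phi(y')\,y'\cdot[\nabla_{x'}f(x'+x_N y',t)-\nabla_{x'}f(x',t)]\,dy',
\end{equation*}
so that $|u_{x_N}|\leq C x_N^{1+\beta-\alpha}|f|_{C^{2+\beta-\alpha,1+\gamma/2}}$. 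For the second derivatives $u_{x_ix_j}$, which enter \eqref{B5} with the weight $x_N^{\alpha}$, the function $\nabla^2 f$ is not classically defined; I would integrate by parts once in $y'$ under the convolution, using that the chain rule gives $(\partial_{x_k'}\nabla_{x'}f)(x'+x_N y')=x_N^{-1}\partial_{y_k'}[\nabla_{x'}f(x'+x_N y')]$, to obtain for each pair $(i,j)$ an identity of the form
\begin{equation*}
u_{x_ix_j}(x,t)=\frac{1}{x_N}\int\Psi_{ij}(y')\,[\nabla_{x'}f(x'+x_N y',t)-\nabla_{x'}f(x',t)]\,dy',
\end{equation*}
with $\Psi_{ij}\in C_0^{\infty}(R^{N-1})$ of zero mean. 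The $(1+\beta-\alpha)$-H\"older continuity of $\nabla_{x'}f$ then gives $|u_{x_ix_j}|\leq C x_N^{\beta-\alpha}|f|$, hence $|x_N^{\alpha}u_{x_ix_j}|\leq C x_N^{\beta}|f|$ uniformly on $R_{+T}^N$.

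It remains to verify the H\"older seminorms in \eqref{B5}. By Lemma \ref{LE1} it suffices to bound the equivalent seminorm $H^{\gamma}_{\alpha,R^N_{+T}}$ of \eqref{E2} together with the $(\gamma/2)$-H\"older seminorm in $t$, applied to each of the functions $u$, $u_{x_i}$, $u_t$, and $x_N^{\alpha}u_{x_ix_j}$. For each of these I would follow the two-case analysis of Lemma \ref{LSl1}: when the test points $x,\overline{x}$ lie in a common scaling regime (as in \eqref{Sl.7} or \eqref{SlA.2.1}) estimate the increment directly by the mean value theorem under the convolution integral using the H\"older data on $\nabla_{x'}f$ and $f_t$; when they straddle regimes (as in the decompositions \eqref{Sl.9} and \eqref{SlA.8}), insert an intermediate point of the form $(x',x_N+h)$ or $(x',x_N+h^{1/(2-\alpha)})$ and treat the two pieces separately. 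The $t$-H\"older continuity of $u_t$ is inherited directly from that of $f_t$ via the commutation $\partial_t u(x,t)=\int\phi(y')f_t(x'+x_N y',t)\,dy'$. The main obstacle will be the H\"older control of $x_N^{\alpha}u_{x_ix_j}$ in the transition regime $|x'-\overline{x}'|\sim x_N$ and $|x_N-\overline{x}_N|\sim x_N$, where the pointwise bound $C x_N^{\beta}|f|$ must be carefully matched against the $s$-distance \eqref{B2} to produce precisely the exponent $\gamma$; once these estimates are assembled and summed, the result \eqref{P.1} follows.
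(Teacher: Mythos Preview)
Your approach is sound but takes a genuinely different route from the paper's. The paper defines $u(\cdot,t)$ as the bounded \emph{harmonic extension} of $f(\cdot,t)$ to $R^N_+$ (the solution of $\Delta u=0$, $u|_{x_N=0}=f$, $u\to0$ at infinity, with $t$ a parameter) and then relies on two ingredients: (i) the maximum principle, which transports the $t$-regularity of $f$ and $f_t$ directly to $u$ and $u_t$ via \eqref{P.5}, and (ii) Stein's characterisation of boundary H\"older smoothness through interior derivative decay, $|D^k_x u|\le C x_N^{-k+l}|f|_{C_x^l}$ for $k\ge2$ (relation \eqref{P.18}), which combined with the interpolation \eqref{P.19} in strips $\Pi(x_N)$ yields the weighted estimates \eqref{P.17}--\eqref{P.17.2} and \eqref{P.36}. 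Your tangential mollification at scale $x_N$ delivers exactly the same scale of pointwise bounds---your identity for $u_{x_ix_j}$ gives $|D^2u|\le Cx_N^{\beta-\alpha}$, and one further integration by parts gives $|D^3u|\le Cx_N^{\beta-\alpha-1}$---so your two-regime case split can be carried out in parallel with the paper's estimates \eqref{P.20}--\eqref{P.30}; for the $t$-H\"older bound on $x_N^{\alpha}D^2u$ you can mimic \eqref{P.39}--\eqref{P.49} by applying your second-derivative formula to the time increment $\varphi_h=(f(\cdot,t+h)-f(\cdot,t))/h^{\gamma/2}\in C_x^{2-\alpha}$. Your construction has the virtue of being entirely elementary (no PDE theory), while the paper's harmonic extension makes the $t$-regularity of $u_t$ a one-line consequence of the maximum principle. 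One point you should not omit: neither extension is compactly supported in $x$, so, as the paper does in its final sentence, you must multiply $u$ by a smooth cutoff equal to one near the support of $f$ before asserting the full norm bound \eqref{P.1}.
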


\begin{proof}
Let $u(x,t)$ is the solution of the following Dirichlet problem with
the parameter $t\in [0,T]$:

\begin{equation}
\Delta u=0,\quad x\in R_{+}^{N}\quad(x_{N}>0),\label{P.2}%
\end{equation}%

\begin{equation}
u|_{x_{N}=0}=f(x^{\prime},t),\label{P.3}%
\end{equation}%

\begin{equation}
u\rightarrow0,\quad|x|\rightarrow\infty.\label{P.4}%
\end{equation}
As it is well known, the solution of \eqref{P.2}- \eqref{P.4} is
given by the potential of the double layer, defined by the Newton
potential.

Note, first, that for the problem \eqref{P.2}- \eqref{P.4} we have
the following maximum principle

\begin{equation}
|u|_{R_{+}^{N}}^{(0)}\leq|f|_{R^{N-1}}^{(0)}.\label{P.5}%
\end{equation}
Indeed, by \eqref{P.4}, we can choose $K>0$ so large that $|u| \leq
|f|^{(0)}_{R^{N-1}_{T}}/2$ for $|x| \geq K $, and, by the properties
of the double layer potential and the finiteness of $f$, a $K$ can
be chosen independent of $t$. Now consider in the domain $B_{K} =
R_{+}^{N} \cap \left\{|x|<K \right\}$ the functions $v^{\pm} = \pm u
+ |f|_{R^{N-1}}^{(0)} $. Within this domain we have

\begin{equation}
\Delta v^{\pm}=0,\quad x\in B_{K},\label{P.6}%
\end{equation}
and on the boundary $\partial B_{K}=\left\{ x_{N}=0,|x^{\prime}|\leq
K\right\} \cup\left\{ x_{N}>0,|x^{\prime}|=K\right\}$ the inequality

\begin{equation}
v^{\pm}\geq 0,\quad x\in\partial B_{K}\label{P.7}%
\end{equation}
holds. It follows from \eqref{P.6}, \eqref{P.7} and from the maximum
principle, that $v^{\pm}\geq 0$ in $\overline{B}_{K}$ for all $t$,
and, thus,

\[
|u|_{B_{K}}^{(0)}\leq|f|_{R^{N-1}}^{(0)} \ \ t\in[0,T].
\]
Due to the choice of $K$, we have the inequality \eqref{P.5} on
whole domain $R^{N}_{+T}$.

Consider first the properties of the function $u(x,t)$ with respect
to $t$. Let $v(x,t)$ is the solution of \eqref{P.2}- \eqref{P.4}
with the boundary condition

\begin{equation}
v|_{x_{N}=0}=f_{t}(x^{\prime},t)\label{P.8}%
\end{equation}
instead of \eqref{P.3}.  Consider also for $h>0$ the following
function
\[
u_{h}(x,t)=\frac{u(x,t+h)-u(x,t)}{h},
\]
which satisfies the problem \eqref{P.2}- \eqref{P.4} with the
boundary condition

\begin{equation}
u_{h}|_{x_{N}=0}=f_{h}(x^{\prime},t)\equiv\frac{f(x^{\prime},t+h)-f(x^{\prime
},t)}{h}.\label{P.9}%
\end{equation}
Let further $w(x,t)=u_{h}(x,t)-v(x,t)$, and the function $w(x,t)$
also satisfies the problem \eqref{P.2}- \eqref{P.4}, but with the
boundary condition

\begin{equation}
w|_{x_{N}=0}=f_{h}(x^{\prime},t)-f_{t}(x^{\prime},t)\equiv\varphi
_{h}(x^{\prime},t).\label{P.10}%
\end{equation}
Due to the properties of the function $f(x^{\prime},t)$, we have
with some $\theta(x^{\prime},t,h)\in(0,1)$ according to the mean
value theorem

\[
|\varphi_{h}(x^{\prime},t)|=|f_{h}(x^{\prime},t)-f_{t}(x^{\prime},t)|=
\]%

\begin{equation}
=|f_{t}(x^{\prime},t+\theta
h)-f_{t}(x^{\prime},t)|\leq\left\langle
f_{t}(x^{\prime},t)\right\rangle
_{t,R_{T}^{N-1}}^{(\gamma/2)}h^{\gamma
/2}\rightarrow0,\quad h\rightarrow0.\label{P.11}%
\end{equation}
Consequently, on the base of \eqref{P.5},

\begin{equation}
|u_{h}-v|_{R_{+T}^{N}}^{(0)}\leq Ch^{\gamma/2}\rightarrow0,\quad
h\rightarrow0.\label{P.12}%
\end{equation}
This means, that the function $u(x,t)$ has the derivative with
respect to the variable $t$ for $x\in R^{N}_{+T}$, and
$u_{t}(x,t)=v(x,t)$, that is $u_{t}(x,t)$ satisfies the problem
\eqref{P.2}- \eqref{P.4} with the boundary condition \eqref{P.8}.

Further, considering the function

\[
v_{h}(x,t)=\frac{u_{t}(x,t+h)-u_{t}(x,t)}{h^{\gamma/2}},
\]
we see, that it satisfies the same problem with the boundary
condition

\begin{equation}
v_{h}(x,t)|_{x_{N}=0}=\frac{f_{t}(x^{\prime},t+h)-f_{t}(x^{\prime}%
,t)}{h^{\gamma/2}}\equiv f_{th}(x^{\prime},t).\label{P.13}%
\end{equation}
Thus, on the base of \eqref{P.5} again,

\begin{equation}
\left|  \frac{u_{t}(x,t+h)-u_{t}(x,t)}{h^{\gamma/2}}\right|  _{R_{+T}^{N}%
}^{(0)}\leq C\left|  \frac{f_{t}(x^{\prime},t+h)-f_{t}(x^{\prime}%
,t)}{h^{\gamma/2}}\right|  _{R_{T}^{N-1}}^{(0)},\label{P.14}%
\end{equation}
which, by the arbitrariness of $h$, means that

\begin{equation}
\left\langle u_{t}(x,t)\right\rangle
_{t,R_{+T}^{N}}^{(\gamma/2)}\leq C\left\langle
f_{t}(x^{\prime},t)\right\rangle _{t,R_{T}^{N-1}}^{(\gamma
/2)}.\label{P.15}%
\end{equation}

Consider now the properties of the function $u(x,t)$ with respect to
the variables $x$.

First, it follows from the results of \cite{19}, \cite{20}, that for
each $t\in [0,T]$, due to
 $f\in C_{x}^{2+\beta-\alpha}(R_{T}^{N-1})$, we have $u\in C_{x}
^{2+\beta-\alpha}(R_{+T}^{N})$, and
\begin{equation}
|u|_{C_{x}^{2+\beta-\alpha}(R_{+T}^{N})}\leq
C|f|_{C_{x}^{2+\beta-\alpha
}(R_{T}^{N-1})},\label{P.16}%
\end{equation}
where the symbol $x$ at the bottom of the space notation means that
we consider the smoothness only with respect to $x$.

Show that the following estimates

\begin{equation}
\left\langle x_{N}^{\alpha}D^{2}u\right\rangle
_{x,R_{+T}^{N}}^{(\beta)}\leq
C|f|_{C_{x}^{2+\beta-\alpha}(R_{T}^{N-1})},\label{P.17}%
\end{equation}%

\begin{equation}
\left\langle \widetilde{x}_{N}^{\gamma\alpha/2}\left(  x_{N}^{\alpha}%
D^{2}u\right)  \right\rangle _{x,R_{+T}^{N}}^{(\gamma)}\leq C|f|_{C_{x}%
^{2+\beta-\alpha}(R_{T}^{N-1})}\label{P.17.1}
\end{equation}
are valid, that is

\begin{equation}
H_{\alpha}^{\gamma}(x_{N}^{\alpha}D^{2}u)\leq
C|f|_{C_{x}^{2+\beta-\alpha }(R_{T}^{N-1})}.\label{P.17.2}
\end{equation}

We will use the fact that, as it follows from \cite{21}, Ch.5.4, the
condition $f \in C_{x}^{l}(R^{N-1})$ in \eqref{P.3}, \ $l \in
(0,2)$, is equivalent to the condition

\begin{equation}
|D_{x}^{k}u|\leq C_{k}x_{N}^{-k+l}|f|_{C_{x}^{l}(R^{N-1})},\quad
k\geq 2,\label{P.18}%
\end{equation}
where here and below $D_{x}^{k}u=D^{k}u$ means a derivative of the
$k$-th order with respect to $x$ of the function $u(x,t)$.

Since it is important to prove \eqref{P.17} for $x_{N}<1$ only (for
$x_{N}>1$ such the estimate follows from the local estimates and is
well- known), we consider only the case $x_{N}<1$.

We also use the well-known interpolation inequality

\begin{equation}
\left\langle v(x)\right\rangle
_{x,\overline{\Omega}}^{(\beta)}\leq C\left(
|v|_{\overline{\Omega}}^{(0)}\right)  ^{1-\beta}\left(
|v|_{\overline{\Omega
}}^{(1)}\right)  ^{\beta},\label{P.19}%
\end{equation}
which is valid for the functions $v(x)\in C^{1}(\overline{\Omega})$,
$\Omega$ is a (possibly unbounded) domain with the sufficiently
smooth boundary (see, for example, \cite{18.1}, Ch.1 ).

In addition, at the proof of \eqref{P.17}, without loss of
generality, we prove smoothness of the function
$x_{N}^{\alpha}D^{2}u$ with respect to $x^{\prime}$ and with respect
to $x_{N}$ separately and we obtain the estimate \eqref{P.17}
separately for these two cases.

So, let first $x_{N}$ is fixed. Then, by \eqref{P.19} and
\eqref{P.18},

\[
\left\langle x_{N}^{\alpha}D^{2}u\right\rangle
_{x^{\prime}}^{(\beta)}\leq
C\left(  |x_{N}^{\alpha}D^{2}u|^{(0)}\right)  ^{1-\beta}\left(  |x_{N}%
^{\alpha}D^{2}u|^{(0)}+|\nabla_{x^{\prime}}x_{N}^{\alpha}D^{2}u|^{(0)}\right)
^{\beta}\leq
\]%

\begin{equation}
\leq C|f|_{C_{x}^{2+\beta-\alpha}(R_{T}^{N-1})}\left(  x_{N}^{\alpha}%
x_{N}^{-2+(2+\beta-\alpha)}\right)  ^{1-\beta}\left(  x_{N}^{\alpha}%
x_{N}^{-3+(2+\beta-\alpha)}\right)
^{\beta}=C|f|_{C_{x}^{2+\beta-\alpha
}(R_{T}^{N-1})},\label{P.20}%
\end{equation}
that is estimate \eqref{P.17} with respect to $x'$.

Analogously, using \eqref{P.19} and \eqref{P.18}, we prove
\eqref{P.17.1} with respect to $x'$:

\[
x_{N}^{\gamma\alpha/2}\left\langle x_{N}^{\alpha}D^{2}u\right\rangle
_{x^{\prime}}^{(\gamma)}\leq Cx_{N}^{\gamma\alpha/2}\left(
|x_{N}^{\alpha
}D^{2}u|^{(0)}\right)  ^{1-\gamma}\left(  |x_{N}^{\alpha}D^{2}u|_{x^{\prime}%
}^{(1)}\right)  ^{\gamma}\leq
\]%

\[
\leq
C|f|_{C_{x}^{2+\beta-\alpha}(R_{T}^{N-1})}x_{N}^{\gamma\alpha/2}\left(
x_{N}^{\alpha}x_{N}^{-2+(2+\beta-\alpha)}\right)
^{1-\gamma}\left( x_{N}^{\alpha}x_{N}^{-3+(2+\beta-\alpha)}\right)
^{\gamma}=
\]%

\begin{equation}
=C|f|_{C_{x}^{2+\beta-\alpha}(R_{T}^{N-1})}x_{N}^{\gamma\alpha/2+\beta-\gamma
}=C|f|_{C_{x}^{2+\beta-\alpha}(R_{T}^{N-1})}.\label{P.21}%
\end{equation}

We prove now the relations \eqref{P.17}, \eqref{P.17.1} with respect
to the variable $x_{N}$. For this we fix some $\varepsilon_{0}\in
(0,1/16)$ and consider the two cases, assuming without loss of
generality that $\overline{x}_{N}\leq x_{N}$.

Let first

\begin{equation}
|x_{N}-\ \overline{x}_{N}|=(x_{N}-\ \overline{x}_{N})\geq\varepsilon_{0}%
x_{N}.\label{P.22}%
\end{equation}
Then

\begin{equation}
\frac{|x_{N}^{\alpha}D^{2}u(x,t)-\overline{x}_{N}^{\alpha}D^{2}u(\overline
{x},t)|}{|x_{N}-\ \overline{x}_{N}|^{\beta}}\leq C\left(
|x_{N}^{\alpha
-\beta}D^{2}u(x,t)|+|\overline{x}_{N}^{\alpha-\beta}D^{2}u(\overline
{x},t)|\right)  .\label{P.23}%
\end{equation}
In this case, as above

\begin{equation}
|x_{N}^{\alpha-\beta}D^{2}u(x,t)|\leq C|f|_{C_{x}^{2+\beta-\alpha}(R_{T}%
^{N-1})}x_{N}^{\alpha-\beta}x_{N}^{-2+(2+\beta-\alpha)}=C|f|_{C_{x}%
^{2+\beta-\alpha}(R_{T}^{N-1})},\label{P.24}%
\end{equation}
and similarly for
$|\overline{x}_{N}^{\alpha-\beta}D^{2}u(\overline{x},t)|$.

In the same way

\begin{equation}
x_{N}^{\gamma\alpha/2}\frac{|x_{N}^{\alpha}D^{2}u(x,t)-\overline{x}%
_{N}^{\alpha}D^{2}u(\overline{x},t)|}{|x_{N}-\
\overline{x}_{N}|^{\gamma}}\leq C\left(
|x_{N}^{\alpha-\beta}D^{2}u(x,t)|+|\overline{x}_{N}^{\alpha-\beta
}D^{2}u(\overline{x},t)|\right)  \label{P.25}%
\end{equation}
and then proceeding as in  \eqref{P.24}.

Let now

\begin{equation}
0<(x_{N}-\overline{x}_{N})\leq\varepsilon_{0}x_{N},\label{P.26}%
\end{equation}
and let also

\[
\Pi(x_{N})=\left\{  y\in R_{+}^{N}:x_{N}-2\varepsilon_{0}x_{N}\leq
y_{N}\leq x_{N}+2\varepsilon_{0}x_{N}\right\}  ,
\]%

\begin{equation}
\Pi_{T}(x_{N})=\Pi(x_{N})\times\lbrack0,T].\label{P.27}%
\end{equation}
Then, taking into account that on $\Pi_{T}(x_{N})$ we have
$y_{N}\sim x_{N}$, as in the previous case

\[
\frac{|x_{N}^{\alpha}D^{2}u(x,t)-\overline{x}_{N}^{\alpha}D^{2}u(\overline
{x},t)|}{|x_{N}-\ \overline{x}_{N}|^{\beta}}\leq\left\langle
y_{N}^{\alpha }D^{2}u(y,t)\right\rangle
_{y,\Pi_{T}(x_{N})}^{(\beta)}\leq
\]%

\begin{equation}
\leq C\left(  |y_{N}^{\alpha-\beta}D^{2}u|_{\Pi_{T}(x_{N})}^{(0)}%
+x_{N}^{\alpha}\left\langle D^{2}u(y,t)\right\rangle _{y,\Pi_{T}(x_{N}%
)}^{(\beta)}\right)  \equiv A_{1}+A_{2}.\label{P.28}%
\end{equation}
Here $A_{1}$ is estimated in the same way as in \eqref{P.24}, and
$A_{2}$ - as well as in \eqref{P.20}, which gives

\begin{equation}
\frac{|x_{N}^{\alpha}D^{2}u(x,t)-\overline{x}_{N}^{\alpha}D^{2}u(\overline
{x},t)|}{|x_{N}-\ \overline{x}_{N}|^{\beta}}\leq
C|f|_{C_{x}^{2+\beta-\alpha
}(R_{T}^{N-1})}.\label{P.29}%
\end{equation}

The estimate

\begin{equation}
x_{N}^{\gamma\alpha/2}\frac{|x_{N}^{\alpha}D^{2}u(x,t)-\overline{x}%
_{N}^{\alpha}D^{2}u(\overline{x},t)|}{|x_{N}-\
\overline{x}_{N}|^{\gamma}}\leq
C|f|_{C_{x}^{2+\beta-\alpha}(R_{T}^{N-1})}\label{P.30}%
\end{equation}
is quite similar.

This completes the proof of \eqref{P.17.2}.

Similarly, we obtain the properties with respect to the variables
$x$ of the derivative $u_{t}$, that is,

\begin{equation}
H_{\alpha}^{\gamma}(u_{t})\leq C|f|_{C_{x}^{2+\beta-\alpha}(R_{T}^{N-1}%
)}\label{P.31}%
\end{equation}
because $u_{t}$ satisfies the problem \eqref{P.2}- \eqref{P.4} with
the boundary condition \eqref{P.8}.

Indeed, since  $u_{t}|_{x_{N}=0}=f_{t}$, so

\begin{equation}
\left\langle u_{t}\right\rangle _{x,R_{+T}^{N}}^{(\beta)}\leq
C\left\langle f_{t}\right\rangle _{x,R_{T}^{N-1}}^{(\beta)}\leq
C|f|_{C^{2+\beta
-\alpha,\frac{2+\beta-\alpha}{2-\alpha}}(R_{T}^{N-1})}.\label{P.32}%
\end{equation}

Further, for $x,\overline{x}\in R_{+}^{N}$,
$x_{N}\geq\overline{x}_{N}$ consider the difference

\begin{equation}
\Delta(x,\overline{x})u_{t}=x_{N}^{\gamma\alpha/2}\frac{|u_{t}(x,t)-u_{t}%
(\overline{x},t)|}{|x-\ \overline{x}|^{\gamma}}.\label{P.33}%
\end{equation}
If $|x-\ \overline{x}|\geq\varepsilon_{0} x_{N}$, then

\[
\Delta(x,\overline{x})u_{t}\leq C\frac{x_{N}^{\gamma\alpha/2}}{x_{N}%
^{\gamma\alpha/2}}\frac{|u_{t}(x,t)-u_{t}(\overline{x},t)|}{|x-\
\overline {x}|^{\beta}}\leq C\left\langle u_{t}\right\rangle
_{x,R_{+T}^{N}}^{(\beta )}\leq
\]%

\begin{equation}
\leq C|f|_{C^{2+\beta-\alpha,\frac{2+\beta-\alpha}{2-\alpha}}(R_{T}^{N-1}%
)}.\label{P.34}%
\end{equation}

If now  $|x-\ \overline{x}|\leq \varepsilon_{0} x_{N}$, then $x_{N}
\sim\overline{x}_{N}$, and then, using \eqref{P.18}, we obtain that

\[
\Delta(x,\overline{x})u_{t}\leq Cx_{N}^{\gamma\alpha/2}\frac{|u_{t}%
(x,t)-u_{t}(\overline{x},t)|}{|x-\ \overline{x}|}|x-\
\overline{x}|^{1-\gamma }\leq
\]%

\begin{equation}
\leq Cx_{N}^{\gamma\alpha/2}|\nabla_{x}u_{t}(x,t)|_{\Pi_{T}(x_{N})}^{(0)}%
x_{N}^{1-\gamma}\leq
C|f_{t}|_{C_{x}^{\beta}(R_{T}^{N-1})}x_{N}^{\gamma
\alpha/2+1-\gamma}x_{N}^{-1+\beta}=C|f_{t}|_{C_{x}^{\beta}(R_{T}^{N-1}%
)}.\label{P.35}%
\end{equation}

Now \eqref{P.31} follows from  \eqref{P.34} and \eqref{P.35}.

Let us show now the smoothness of the function
$x_{N}^{\alpha}D^{2}u(x,t)$ with respect to the variable $t$, that
is show that

\begin{equation}
\left\langle x_{N}^{\alpha}D^{2}u\right\rangle
_{t,R_{+T}^{N}}^{(\gamma
/2)}\leq C|f|_{C^{2+\beta-\alpha,\frac{2+\beta-\alpha}{2-\alpha}}(R_{T}%
^{N-1})}=C|f|_{C^{2+\beta-\alpha,1+\gamma/2}(R_{T}^{N-1})}.\label{P.36}%
\end{equation}

For this we fix some $h>0$ and consider the function

\begin{equation}
v_{h}(x,t)=\frac{u(x,t+h)-u(x,t)}{h^{\gamma/2}},\label{P.37}%
\end{equation}
which satisfies the problem \eqref{P.2}- \eqref{P.4} with the
following boundary condition

\begin{equation}
v_{h}(x,t)|_{x_{N}=0}=\varphi_{h}(x,t)\equiv\frac{f(x,t+h)-f(x,t)}%
{h^{\gamma/2}}.\label{P.38}%
\end{equation}
It follows from the results of \cite{15}, \eqref{Sol.1}, that
uniformly with respect to the variable $t$  the function
$\varphi_{h}(x,t)\in C_{x}^{2-\alpha}(R^{N-1})$ with respect to the
variables $x$, and

\begin{equation}
\max_{t\in\lbrack0,T]}\left|  \varphi_{h}(\cdot,t)\right|  _{R^{N-1}%
}^{(2-\alpha)}\leq C|f|_{C^{2+\beta-\alpha,1+\gamma/2}(R_{T}^{N-1}%
)}.\label{P.39}%
\end{equation}

Note now, that

\[
\frac{x_{N}^{\alpha}D^{2}u(x,t+h)-x_{N}^{\alpha}D^{2}u(x,t)}{h^{\gamma/2}%
}=x_{N}^{\alpha}D^{2}v_{h}(x,t).
\]

Consequently, it follows from \eqref{P.18} that

\begin{equation}
|x_{N}^{\alpha}D^{2}v_{h}(x,t)|\leq
Cx_{N}^{\alpha}\max_{t\in\lbrack
0,T]}\left|  \varphi_{h}(\cdot,t)\right|  _{R^{N-1}}^{(2-\alpha)}%
x_{N}^{-2+(2-\alpha)}=C\max_{t\in\lbrack0,T]}\left|  \varphi_{h}%
(\cdot,t)\right|  _{R^{N-1}}^{(2-\alpha)}.\label{P.49}%
\end{equation}

So, \eqref{P.36} follows from \eqref{P.49} and \eqref{P.39}, in view
of the definition of $v(x,t)$.

Multiplying now the  function $u(x, t)$ by a smooth finite function
$\eta(x)$, which is equal to one in a neighborhood of support of
$f(x',t)$, we get a finite extension of $f(x',t)$ of desired class,
and the estimate \eqref{P.1}.

The lemma \ref{LP1} is proved.
\end{proof}

From this lemma in the standard way (see \cite{12}), as it was
described in the beginning of this section, we get the following
assertion.

\begin{lemma}\label{LP2}
There exist bounded extension operators $E^{+}$ and $E^{-}$, such
that

\begin{equation} \rho\in
C^{2+\beta-\alpha,\frac{2+\beta-\alpha}{2-\alpha}}(\Gamma_{T})\rightarrow
E\rho \equiv E^{\pm}\rho\equiv \rho^{\pm}\in
C^{2+\gamma,\frac{2+\gamma}{2}}_{s}(\overline{\Omega}^{\pm}_{T}),
\label{P.50}
\end{equation}

\begin{equation}
|E^{\pm}\rho|_{C^{2+\gamma,\frac{2+\gamma}{2}}_{s}(\overline{\Omega}^{\pm}_{T})}
 \leq C
|\rho|_{C^{2+\beta-\alpha,\frac{2+\beta-\alpha}{2-\alpha}}(\Gamma_{T})},
 \label{P.51}
\end{equation}
and we can assume that the supports of the extended functions
$\rho^{\pm}$ are included in the sufficiently small neighbourhood
$\mathcal{N}_{T}$ of the surface $\Gamma_{T}$. We will denote the
extended functions $\rho\equiv E \rho\equiv E^{\pm}\rho$ by the same
symbol $\rho$ to not to overload the notation, that is,

\begin{equation}
\rho|_{\overline{\Omega}^{\pm}_{T}} \equiv  \rho^{\pm}\equiv
E^{\pm}\rho \equiv E\rho|_{\overline{\Omega}^{\pm}_{T}} .
 \label{P.510}
\end{equation}

\end{lemma}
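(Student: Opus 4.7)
The plan is to derive Lemma \ref{LP2} from Lemma \ref{LP1} by the standard partition-of-unity plus local-straightening argument from \cite{12}. Since $\Gamma$ is $H^{4+\gamma}$-smooth and compact, I would fix a finite cover $\{U_j\}_{j=1}^{M}$ of a tubular neighborhood of $\Gamma$ together with $C^{4+\gamma}$-diffeomorphisms $\Phi_j: U_j \to B_j$ that straighten $\Gamma \cap U_j$ onto a piece of $\{x_N=0\}$ and send $\Omega^{\pm} \cap U_j$ into $\{\pm x_N>0\}$, a smooth partition of unity $\{\eta_j\}$ on $\Gamma$ subordinate to $\{U_j\}$, and a cutoff $\zeta(x)$ supported in $\mathcal{N}$ and equal to $1$ on a smaller tubular neighborhood.

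Given $\rho \in C^{2+\beta-\alpha,\frac{2+\beta-\alpha}{2-\alpha}}(\Gamma_T)$, I would pull $\eta_j \rho$ back by $\Phi_j$ to a finite function $f_j(x',t) \in C^{2+\beta-\alpha,1+\gamma/2}(R^{N-1}_T)$ whose norm is controlled by $|\rho|_{C^{2+\beta-\alpha,\frac{2+\beta-\alpha}{2-\alpha}}(\Gamma_T)}$, then apply Lemma \ref{LP1} to obtain an extension $u_j^{+} \in C^{2+\gamma,\frac{2+\gamma}{2}}_s(R^{N}_{+T})$ of $f_j$ into the upper half-space (and by the mirror construction an extension $u_j^{-}$ into $\{x_N<0\}$), and truncate each $u_j^{\pm}$ in $x_N$ to keep the support inside a thin half-slab. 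Transporting each $u_j^{\pm}$ back by $\Phi_j^{-1}$ to a function on $\overline{\Omega}^{\pm}$, summing over $j$, and multiplying by $\zeta$ produces the desired operators $E^{\pm}$, whose output is supported in $\mathcal{N}_T$ and has boundary trace $\rho$ on $\Gamma_T$.

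The main obstacle is the invariance check: one must show that the weighted class $C^{2+\gamma,\frac{2+\gamma}{2}}_s$ is preserved under the diffeomorphisms $\Phi_j^{-1}$, so that the half-space estimate \eqref{P.1} transfers to the bound \eqref{P.51} on $\overline{\Omega}^{\pm}_T$. For this I would use the equivalent seminorm from Lemma \ref{LE1} together with the equivalence in \eqref{1.13}, which identifies the half-space weight $x_N^{\alpha}$ with $d^{\pm}(x)^{\alpha}$ after pull-back. Boundedness of $\Phi_j$ and its derivatives up to order $4+\gamma$ guarantees that each seminorm appearing in \eqref{B5} transforms with a bounded constant: the unweighted Hölder and $t$-derivative pieces by the usual chain rule, and the weighted second-derivative terms $x_N^{\alpha} u_{x_i x_j}$ because $x_N^{\alpha}$ pulled back by $\Phi_j$ is equivalent to $d^{\pm}(x)^{\alpha}$, which absorbs the extra lower-order derivative terms of $\Phi_j$ that arise when differentiating twice in straightened coordinates.

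Once this invariance is verified, compactness of $\Gamma$ makes the finite sum over $j$ harmless, multiplication by the smooth finite cutoffs $\eta_j$ and $\zeta$ is bounded on $C^{2+\gamma,\frac{2+\gamma}{2}}_s$ and on $C^{2+\beta-\alpha,\frac{2+\beta-\alpha}{2-\alpha}}$, and the inequality \eqref{P.51} follows directly by summing the individual estimates provided by Lemma \ref{LP1}. Linearity of the construction (each $\Phi_j$ and each cutoff is fixed once and for all, and Lemma \ref{LP1} is linear) shows that $E^{\pm}$ are bounded linear operators, completing the proof.
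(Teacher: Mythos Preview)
Your proposal is correct and follows essentially the same route as the paper: the paper itself states that Lemma~\ref{LP2} is obtained from Lemma~\ref{LP1} ``in the standard way (see \cite{12})'' via a sufficiently small partition of unity near $\Gamma$ and local straightening of the boundary, which is precisely the construction you outline. Your additional remarks on the invariance of the $C^{2+\gamma,\frac{2+\gamma}{2}}_{s}$ class under the straightening diffeomorphisms (using Lemma~\ref{LE1} and the equivalence $x_{N}^{\alpha}\sim d^{\pm}(x)^{\alpha}$ from \eqref{1.13}) supply exactly the technical check that the paper leaves implicit.
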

%------------------------------------------------------------

Besides, as it follows from the results of \cite{12.1} and from the
lemma \ref{LP1} (as the lemma \ref{LP1} permits to reduce the
situation to the homogeneous boundary conditions), the following
assertion is valid.

\begin{lemma} \label{LPA1}
Let functions $f$ and $g$ are finite, and

\[
f(x^{\prime},t)\in C_{0}^{2+\beta-\alpha,\frac{2+\beta-\alpha}{2-\alpha}%
}(R_{T}^{N-1}),\quad g(x,t)\in C_{s.0}^{\gamma}(R_{+T}^{N}).
\]

Then the problem

\begin{equation}
\frac{\partial u}{\partial t}-x_{N}^{\alpha}\Delta
u=g(x,t),\quad(x,t)\in
R_{+T}^{N},\label{P.A.1}%
\end{equation}%

\begin{equation}
u(x^{\prime},0,t)=f(x^{\prime},t),\quad x_{N}=0,t\in\lbrack0,T],\label{P.A.2}%
\end{equation}%

\begin{equation}
u(x,0)=0,\quad x\in R_{+}^{N}\label{P.A.3}%
\end{equation}
has the unique solution $u(x,t)$, which satisfies the estimate

\begin{equation}
|u|_{s,R_{+T}^{N}}^{(2+\gamma)}\leq C\left(
|f|_{C^{2+\beta-\alpha
,\frac{2+\beta-\alpha}{2-\alpha}}(R_{T}^{N-1})}+|g|_{s,R_{+T}^{N}}^{(\gamma
)}\right)  .\label{P.A.4}%
\end{equation}

\end{lemma}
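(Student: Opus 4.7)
The plan is to reduce the inhomogeneous Cauchy--Dirichlet problem (\ref{P.A.1})--(\ref{P.A.3}) to the corresponding homogeneous one, which is solved in \cite{12.1}, by subtracting off a suitable extension of the boundary datum $f$. The natural tool for building such an extension is Lemma \ref{LP1}, which yields exactly the class $C_{s}^{2+\gamma,\frac{2+\gamma}{2}}(R_{+T}^{N})$ in which we want the final solution $u$ to live.

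Concretely, I would first apply Lemma \ref{LP1} to produce a finite extension $U(x,t)$ of $f$ with $U|_{x_{N}=0}=f$ and $|U|_{s,R_{+T}^{N}}^{(2+\gamma)}\leq C|f|_{C^{2+\beta-\alpha,\frac{2+\beta-\alpha}{2-\alpha}}(R_{T}^{N-1})}$. Inspecting the construction in the proof of Lemma \ref{LP1} (the time variable enters only as a parameter in the Dirichlet problem for the Laplacian, so the $t$-dependence of $U$ is inherited verbatim from that of $f$) one checks that, since $f$ belongs to the zero subscript class $C_{0}^{2+\beta-\alpha,\frac{2+\beta-\alpha}{2-\alpha}}(R_{T}^{N-1})$, the extension $U$ likewise vanishes at $t=0$ together with all its $t$-derivatives of admissible order, so in fact $U\in C_{s,0}^{2+\gamma,\frac{2+\gamma}{2}}(R_{+T}^{N})$. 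Setting $w:=u-U$, the problem for $w$ becomes
\[
w_{t}-x_{N}^{\alpha}\Delta w=\tilde{g}:=g-U_{t}+x_{N}^{\alpha}\Delta U,\quad w|_{x_{N}=0}=0,\quad w|_{t=0}=0,
\]
and by the very definition of the $C_{s}^{2+\gamma,\frac{2+\gamma}{2}}$-norm both $U_{t}$ and $x_{N}^{\alpha}U_{x_{i}x_{j}}$ lie in $C_{s,0}^{\gamma,\gamma/2}(R_{+T}^{N})$ with norms controlled by $|U|_{s,R_{+T}^{N}}^{(2+\gamma)}$. Consequently $\tilde{g}\in C_{s,0}^{\gamma,\gamma/2}(R_{+T}^{N})$ and $|\tilde{g}|_{s,R_{+T}^{N}}^{(\gamma)}\leq C\bigl(|g|_{s,R_{+T}^{N}}^{(\gamma)}+|f|_{C^{2+\beta-\alpha,\frac{2+\beta-\alpha}{2-\alpha}}(R_{T}^{N-1})}\bigr)$. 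This is precisely the setting of the homogeneous Cauchy--Dirichlet problem solved in \cite{12.1}, which furnishes a unique $w\in C_{s}^{2+\gamma,\frac{2+\gamma}{2}}(R_{+T}^{N})$ with $|w|_{s,R_{+T}^{N}}^{(2+\gamma)}\leq C|\tilde{g}|_{s,R_{+T}^{N}}^{(\gamma)}$. Setting $u:=w+U$ yields both existence and the estimate (\ref{P.A.4}); uniqueness is immediate since the difference of two solutions satisfies the homogeneous problem with zero data, and hence must vanish by the uniqueness in \cite{12.1}.

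The only nontrivial point to nail down is the compatibility observation that Lemma \ref{LP1} delivers an extension in the zero subscript class when applied to $f$ in the zero subscript class; this is what permits the subtraction to land inside the subspace where the result of \cite{12.1} is directly applicable. Once that is confirmed, the remainder of the argument reduces to linear superposition $u=w+U$ and the routine triangle-type bound for $\tilde{g}$, the cited result of \cite{12.1} doing all the actual work of controlling $w$ in terms of $\tilde g$.
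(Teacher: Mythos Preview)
Your proposal is correct and takes essentially the same approach as the paper: the paper simply remarks that Lemma~\ref{LPA1} ``follows from the results of \cite{12.1} and from the lemma~\ref{LP1} (as the lemma~\ref{LP1} permits to reduce the situation to the homogeneous boundary conditions)'', which is precisely the subtraction $u=w+U$ you carry out. Your additional observation that the extension $U$ inherits the zero subscript class from $f$ (since $t$ enters only as a parameter in the Dirichlet problem of Lemma~\ref{LP1}) is exactly the compatibility point needed to make the reduction go through, and the paper leaves it implicit.
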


%--------------------------------------

In the same way, with the help of results of \cite{12.1} and from
the lemma \ref{LP1} we get the following theorem.

Consider the Cauchy-Dirichlet problem for the degenerate equations
of the form

\begin{equation}
\frac{\partial u^{\pm}}{\partial t}-\left(  d^{\pm}(x)\right)  ^{\alpha}%
B^{\pm}(x,t)\Delta u^{\pm}=g^{\pm}(x,t),\quad(x,t)\in\Omega_{T}^{\pm},\label{B.8}%
\end{equation}%

\begin{equation}
u^{\pm}|_{\Gamma_{T}}=f^{\pm}(x,t),\label{B.9}%
\end{equation}%

\begin{equation}
u^{\pm}|_{\Gamma_{T}^{\pm}}=h^{\pm}(x,t),\label{B.10}%
\end{equation}%

\begin{equation}
u^{\pm}(x,0)=0,\label{B.11}%
\end{equation}
where the functions $d^{\pm}(x)$ are introduced in \eqref{1.13},

\[
B^{\pm}(x,t)\in
C^{\gamma,\gamma/2}(\overline{\Omega}_{T}^{\pm}),\quad\nu\leq
B^{\pm}\leq\nu^{-1},
\]%

\begin{equation}
g^{\pm}\in C_{0,s}^{\gamma,\gamma/2}(\overline{\Omega}_{T}^{\pm}),
\ f^{\pm}\in
C_{0}^{2+\beta-\alpha,\frac{2+\beta-\alpha}{2-\alpha}}(\Gamma_{T}),
\ h^{\pm}\in
C_{0}^{2+\gamma,\frac{2+\gamma}{2}}(\Gamma_{T}^{\pm}).\label{B.12}%
\end{equation}

\begin{theorem}\label{TB1}

The problem \eqref{B.8}- \eqref{B.11} has the unique solution from
the space $C_{0,s}^{2+\gamma,\frac{2+\gamma}{2}}(\overline{\Omega}
_{T}^{\pm})$ and the following estimate is valid

\begin{equation}
|u^{\pm}|_{s,\overline{\Omega}_{T}^{\pm}}^{(2+\gamma)}\leq C\left(
|g^{\pm}|_{s,\overline{\Omega}_{T}^{\pm}}^{(\gamma)}+|f^{\pm}|_{\Gamma_{T}%
}^{(2+\beta-\alpha,\frac{2+\beta-\alpha}{2-\alpha})}+|h^{\pm}|_{\Gamma_{T}%
^{\pm}}^{(2+\gamma,\frac{2+\gamma}{2})}\right)  .\label{B.12.1}%
\end{equation}

\end{theorem}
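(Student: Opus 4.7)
The plan is to reduce the inhomogeneous problem \eqref{B.8}--\eqref{B.11} to a homogeneous one (with zero data on $\Gamma_T$ and $\Gamma_T^{\pm}$) by subtracting a suitable extension of the boundary data, and then to invoke the solvability theory for the homogeneous Cauchy-Dirichlet problem for degenerate equations from \cite{12.1}. Since the boundary conditions \eqref{B.9}--\eqref{B.10} and equation \eqref{B.8} do not couple $u^{+}$ and $u^{-}$, we treat each sign separately.

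First, I would extend $f^{\pm}$ off $\Gamma_T$ into $\overline{\Omega}_T^{\pm}$ using Lemma \ref{LP2}, obtaining $F^{\pm}\in C^{2+\gamma,\frac{2+\gamma}{2}}_{s}(\overline{\Omega}^{\pm}_{T})$ with support in a small neighbourhood $\mathcal{N}_T$ of $\Gamma_T$ and with norm controlled by $|f^{\pm}|_{\Gamma_T}^{(2+\beta-\alpha,\frac{2+\beta-\alpha}{2-\alpha})}$. Because $f^{\pm}$ belongs to the subspace with zero at $t=0$, the extension inherits this vanishing property. Independently, $h^{\pm}$ is extended from $\Gamma_T^{\pm}$ to $\overline{\Omega}_T^{\pm}$ as a function $H^{\pm}$ in the standard (non-degenerate) space $C^{2+\gamma,\frac{2+\gamma}{2}}(\overline{\Omega}_T^{\pm})$ with support away from $\Gamma_T$; this is the classical extension construction in \cite{12}. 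Set $\Phi^{\pm}=F^{\pm}+H^{\pm}$. Since $\Phi^{\pm}$ restricted to the neighbourhood of $\Gamma_T$ coincides with $F^{\pm}$ (up to a function supported away from $\Gamma$, which automatically lies in $C^{2+\gamma,\frac{2+\gamma}{2}}_{s}$ there), we obtain $\Phi^{\pm}\in C^{2+\gamma,\frac{2+\gamma}{2}}_{0,s}(\overline{\Omega}_T^{\pm})$ with the appropriate norm bound.

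Next, I substitute $u^{\pm}=v^{\pm}+\Phi^{\pm}$. Then $v^{\pm}$ solves
\begin{equation}
\frac{\partial v^{\pm}}{\partial t}-(d^{\pm})^{\alpha}B^{\pm}\Delta v^{\pm}=\widetilde{g}^{\pm},\quad v^{\pm}|_{\Gamma_T}=0,\quad v^{\pm}|_{\Gamma_T^{\pm}}=0,\quad v^{\pm}(x,0)=0,
\end{equation}
with $\widetilde{g}^{\pm}=g^{\pm}-\Phi^{\pm}_t+(d^{\pm})^{\alpha}B^{\pm}\Delta \Phi^{\pm}$. The terms $\Phi^{\pm}_t$ and $(d^{\pm})^{\alpha}\Phi^{\pm}_{x_ix_j}$ lie in $C^{\gamma,\gamma/2}_{s}(\overline{\Omega}_T^{\pm})$ by the very definition \eqref{B5}--\eqref{B6} of the space $C^{2+\gamma,\frac{2+\gamma}{2}}_{s}$, and they vanish at $t=0$, so they belong to $C^{\gamma,\gamma/2}_{0,s}$. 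Multiplication by $B^{\pm}\in C^{\gamma,\gamma/2}$ preserves this class (the $C^{\gamma,\gamma/2}_s$-seminorm is dominated by the product estimate since $B^{\pm}$ is bounded and Hölder with the unweighted exponent $\gamma$). Hence $\widetilde{g}^{\pm}\in C^{\gamma,\gamma/2}_{0,s}(\overline{\Omega}_T^{\pm})$ with
\[
|\widetilde{g}^{\pm}|^{(\gamma)}_{s,\overline{\Omega}_T^{\pm}}\leq C\Bigl(|g^{\pm}|^{(\gamma)}_{s,\overline{\Omega}_T^{\pm}}+|f^{\pm}|_{\Gamma_T}^{(2+\beta-\alpha,\frac{2+\beta-\alpha}{2-\alpha})}+|h^{\pm}|_{\Gamma_T^{\pm}}^{(2+\gamma,\frac{2+\gamma}{2})}\Bigr).
\]

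Finally, the problem for $v^{\pm}$ is exactly the homogeneous (zero lateral data, zero initial data) Cauchy-Dirichlet problem treated in \cite{12.1}, whose unique solvability in $C^{2+\gamma,\frac{2+\gamma}{2}}_{0,s}(\overline{\Omega}_T^{\pm})$ together with the Schauder-type estimate $|v^{\pm}|^{(2+\gamma)}_{s,\overline{\Omega}_T^{\pm}}\leq C|\widetilde{g}^{\pm}|^{(\gamma)}_{s,\overline{\Omega}_T^{\pm}}$ is the hypothesis I invoke. Returning to $u^{\pm}=v^{\pm}+\Phi^{\pm}$ and combining the estimates yields \eqref{B.12.1}; uniqueness for $u^{\pm}$ follows from the uniqueness for $v^{\pm}$. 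The main technical obstacle is the verification that the correction term $(d^{\pm})^{\alpha}B^{\pm}\Delta \Phi^{\pm}$ really sits in $C^{\gamma,\gamma/2}_{s}$: this relies on the fact that the extension operator of Lemma \ref{LP1} was built precisely so that $x_N^{\alpha}D^2 u$ (and hence $(d^{\pm})^{\alpha}\Phi^{\pm}_{x_ix_j}$ after local straightening) has the weighted Hölder regularity demanded by \eqref{B6}, so no extra work beyond the already established Lemmas \ref{LP1}--\ref{LP2} is needed.
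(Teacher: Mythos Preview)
Your approach is exactly the one the paper intends: it states just before Theorem \ref{TB1} that the result is obtained ``with the help of results of \cite{12.1} and from the lemma \ref{LP1}'', i.e.\ by extending the boundary data via Lemmas \ref{LP1}--\ref{LP2} (and the classical extension near $\Gamma^{\pm}$ from \cite{12}) to reduce to the homogeneous Cauchy--Dirichlet problem, and then invoking \cite{12.1}. Your write-up actually gives more detail than the paper, and the key verification you single out --- that $(d^{\pm})^{\alpha}B^{\pm}\Delta\Phi^{\pm}\in C^{\gamma,\gamma/2}_{0,s}$ --- is exactly the content of \eqref{B6} in the definition of $C_s^{2+\gamma,\frac{2+\gamma}{2}}$.
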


%--------------------------------------
\section{The model problem for the two phase Stefan problem for the degenerate
equations.} \label{s3}

Let $a\geq 0$ is a fixed number. Denote $Q^{N}_{\pm}=\{(x,t): x\in
R^{N}_{\pm},t\geq -a\}$, $Q^{N-1}=\{(x',t): x'\in R^{N-1},t\geq
-a\}$. Denote further $R^{N,a}_{\pm,T}=Q^{N}_{\pm}\cap \{t\leq T\}$,
$R^{N-1,a}_{T}=Q^{N-1}\cap \{t\leq T\}$. It is convenient to
consider the domains with the $t \geq -a$, as it will allow us to
consider the points with $t = 0$ as interior points of general
position, which will facilitate the further notation.  We agree,
which is similar to \eqref{B.2}, that zero at the bottom of the
designation of the spaces of functions defined in these domains
means the subspace of the corresponding space whose elements vanish
at $t=-a$ together with all its derivatives with respect to $t$,
which are permitted by the space.

Let $f(x',t)$ is a finite with respect to  $x$ function, which is
defined in $Q^{N-1}$ and is such that

\begin{equation}
f(x^{\prime},-a)\equiv0,\quad f\in
C_{0}^{1+\beta-\alpha,\frac{1+\beta-\alpha
}{2-\alpha}}(\overline{Q}^{N-1}),\label{M.1}%
\end{equation}
which allows us to consider $f$ as the functions, which is defined
for $t\in (-\infty,\infty)$, extending it by identical zero in the
domain $t<-a$ with the preservation of the class.

Let further

\begin{equation}
f_{1}^{\pm}(x,t)\in C_{s,0}^{\gamma,\gamma/2}(\overline{Q}_{\pm}^{N}),\quad f_{2}%
^{\pm}(x^{\prime},t)\in C^{2+\beta-\alpha,1+\gamma/2}_{0}(\overline{Q}%
^{N-1})\label{M1.1}%
\end{equation}
- are given finite functions which are also extended by identical
zero in the domain $t<-a$ with the preservation of the classes.

Consider the following model problem for the triple of the unknown
functions $u^{\pm}(x,t)$ and $\rho(x',t)$, which are defined in
$\overline{Q}^{N}_{\pm}$ and $\overline{Q}^{N-1}$ correspondingly:

\begin{equation}
\frac{\partial u^{\pm}}{\partial t}-(\pm x_{N})^{\alpha}\Delta u^{\pm}=f_{1}%
^{\pm},\quad(x,t)\in Q_{\pm}^{N},\label{M.2}%
\end{equation}%

\begin{equation}
u^{\pm}+A^{\pm}\rho=f_{2}^{\pm},\quad x_{N}=0,(x^{\prime},t)\in Q^{N-1},\label{M.3}%
\end{equation}%

\begin{equation}
\rho_{t}-\varepsilon\Delta_{x^{\prime}}\rho+b^{+}\frac{\partial u^{+}%
}{\partial x_{N}}-b^{-}\frac{\partial u^{-}}{\partial
x_{N}}=f(x^{\prime
},t),\quad x_{N}=0,(x^{\prime},t)\in Q^{N-1},\label{M.4}%
\end{equation}%

\begin{equation}
u^{\pm}(x,-a)=0,\quad\rho(x^{\prime},-a)=0,\label{M.5}%
\end{equation}%

\begin{equation}
u^{\pm}\in C_{s,0}^{2+\gamma,\frac{2+\gamma}{2}}(\overline{Q}_{\pm}^{N}%
),\quad\rho\in C^{2+\beta-\alpha,1+\gamma/2}_{0}(\overline{Q}%
^{N-1}), \rho_{t}\in C_{0}^{1+\beta-\alpha,\frac{1+\beta-\alpha
}{2-\alpha}}(\overline{Q}^{N-1}),\label{M.6}%
\end{equation}
where $A^{\pm}$, $b^{\pm}$, $\varepsilon$ are given positive
constants  and $\varepsilon \in (0,1)$.

Note that the term with $\varepsilon$ in \eqref{M.4} does not apply
directly to the Stefan problem and serves as a regularization of the
problem, that will be needed in the proof of the solvability of the
corresponding linearized Stefan problem in an arbitrary domain. To
the author's knowledge, this regularization of the boundary
condition in the Stefan problem was first used in the paper
\cite{1}.

Below we prove the following a priori estimate of the solution of
the problem \eqref{M.2} - \eqref{M.6}.

\begin{theorem} \label{TM1}

Let $u^{\pm}(x,t)\in
C_{s,0}^{2+\gamma,\frac{2+\gamma}{2}}(\overline{Q} _{\pm}^{N})$,
$\rho\in
C^{3+\beta-\alpha,1+\frac{1+\beta-\alpha}{2-\alpha}}_{0}(\overline{Q}^{N-1})$
 are a finite solution of the problem \eqref{M.2}- \eqref{M.6}. Then for arbitrary $T>0$
 the following estimate is valid:

\[
\mathcal{U}(T)\equiv|u^{+}|_{s,R_{+T}^{N,a}}^{(2+\gamma)}+|u^{-}|_{s,R_{-T}^{N,a}%
}^{(2+\gamma)}+\varepsilon\sum\limits_{i,j=1}%
^{N-1,a}|\rho_{x_{i}x_{j}}|_{C^{1+\beta-\alpha,\frac{1+\beta-\alpha}{2-\alpha}%
}(R_{T}^{N-1,a})}+
\]%
\[
+
|\rho|^{(2+\beta-\alpha,\frac{2+\beta-\alpha}{2-\alpha})}_{R^{N-1,a}_{T}}+
|\rho_{t}|^{(1+\beta-\alpha,\frac{1+\beta-\alpha}{2-\alpha})}_{R^{N-1,a}_{T}}
\leq
\]
\[
\leq C_{T}\left(  |f_{1}^{+}|_{s,R_{+T}^{N,a}}^{(\gamma)}+|f_{1}^{-}%
|_{s,R_{-T}^{N,a}}^{(\gamma)}+|f_{2}^{+}|_{C^{2+\beta-\alpha,\frac
{2+\beta-\alpha}{2-\alpha}}(R_{T}^{N-1,a})}+\right.
\]%

\begin{equation}
\left.  +|f_{2}^{-}|_{C^{2+\beta-\alpha,\frac{2+\beta-\alpha}{2-\alpha}}%
(R_{T}^{N-1,a})}+|f|_{C^{1+\beta-\alpha,\frac{1+\beta-\alpha}{2-\alpha}}%
(R_{T}^{N-1,a})}\right)  \equiv C_{T}\mathcal{M}(T),\label{M.7}%
\end{equation}
where the constant $C_{T}$ in \eqref{M.7} does not depend on
$\varepsilon\in (0,1)$.

\end{theorem}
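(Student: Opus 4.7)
My plan is to decouple the coupled system \eqref{M.2}--\eqref{M.6} by eliminating $u^{\pm}$ in favor of $\rho$ via the inhomogeneous degenerate Cauchy--Dirichlet theory from the previous section, and then to derive the required a priori bound for the resulting (non-local) parabolic problem for $\rho$ by the maximum-principle-based Schauder technique of \cite{17}.

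First, I would apply Lemma \ref{LPA1} to each of the half-space problems
\[
u^{\pm}_{t} - (\pm x_{N})^{\alpha}\Delta u^{\pm} = f_{1}^{\pm}, \qquad u^{\pm}|_{x_{N}=0} = f_{2}^{\pm} - A^{\pm}\rho, \qquad u^{\pm}|_{t=-a} = 0,
\]
obtained by solving \eqref{M.3} for the boundary value of $u^{\pm}$ and substituting into \eqref{M.2}. This gives an estimate of the form
\[
|u^{\pm}|^{(2+\gamma)}_{s,R^{N,a}_{\pm T}} \leq C\bigl(|f_{1}^{\pm}|^{(\gamma)}_{s,R^{N,a}_{\pm T}} + |f_{2}^{\pm}|^{(2+\beta-\alpha,\frac{2+\beta-\alpha}{2-\alpha})}_{R^{N-1,a}_{T}} + |\rho|^{(2+\beta-\alpha,\frac{2+\beta-\alpha}{2-\alpha})}_{R^{N-1,a}_{T}}\bigr).
\]
Combined with the trace estimate \eqref{SlA.1} of Lemma \ref{LSl1}, this controls $u^{\pm}_{x_{N}}|_{x_{N}=0}$ in $C^{1+\beta-\alpha,\frac{1+\beta-\alpha}{2-\alpha}}(R^{N-1,a}_{T})$ by the same quantities. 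Substituting into the Stefan-type condition \eqref{M.4} then produces a non-local parabolic equation for $\rho$ alone,
\[
\rho_{t} - \varepsilon\Delta_{x'}\rho + L\rho = f + G[f_{1}^{\pm},f_{2}^{\pm}], \qquad \rho|_{t=-a}=0,
\]
where $L\rho := b^{+}u^{+}_{x_{N}}|_{0} - b^{-}u^{-}_{x_{N}}|_{0}$ is defined through the $u^{\pm}$ driven only by the boundary value $-A^{\pm}\rho$, and $G$ collects the data-driven contributions.

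The second step, which I expect to be the main technical ingredient, is the a priori estimate for $\rho$. Because $A^{\pm}, b^{\pm}>0$, the operator $L$ is essentially the sum of two Dirichlet-to-Neumann maps of degenerate parabolic equations applied to $-A^{\pm}\rho$, hence a positive non-local operator whose order is tuned precisely to the anisotropic H\"{o}lder scale $C^{2+\beta-\alpha,\frac{2+\beta-\alpha}{2-\alpha}}$ in which $\rho$ is sought. The operator $\partial_{t}+L$ should therefore behave as a parabolic operator in this scale, and applying the maximum principle in turn to $\rho$, to $\rho_{t}$, and to tangential second differences of $\rho$ (in the spirit of \cite{17}) should bound each norm on the left-hand side of \eqref{M.7} by $\mathcal{M}(T)$ plus the $\rho$-norm already controlled. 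Feeding the resulting bound for $\rho$ back into the first-step estimate for $u^{\pm}$ then closes the loop and yields \eqref{M.7}.

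The principal obstacle will be obtaining a constant $C_{T}$ that is uniform in $\varepsilon\in(0,1)$. Treating $-\varepsilon\Delta_{x'}$ as the principal part of the equation for $\rho$ would produce a bound that deteriorates as $\varepsilon\to 0$, so the uniform estimate must come from the intrinsic parabolicity of $\partial_{t}+L$; the $\varepsilon$-term must enter only additively, with its correct sign providing the small coercive contribution $\varepsilon\sum_{i,j}|\rho_{x_{i}x_{j}}|_{C^{1+\beta-\alpha,\frac{1+\beta-\alpha}{2-\alpha}}}$ visible on the left of \eqref{M.7}. The delicate point is thus to carry out the maximum-principle/difference-quotient argument in a form that relies solely on the positivity and order of $L$, with no constant involving a negative power of $\varepsilon$.
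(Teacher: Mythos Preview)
Your overall architecture---reduce to a self-absorbing inequality $\mathcal{U}(T)\le C(T+a)^{\mu}\mathcal{U}(T)+C\mathcal{M}(T)$ on a short interval, with the $\varepsilon$-independent bound coming from the intrinsic parabolicity of the Stefan coupling rather than from $-\varepsilon\Delta_{x'}$---matches the paper, and your use of Lemma~\ref{LPA1} plus the trace estimate~\eqref{SlA.1} to control $u^{\pm}$ once $\rho$ is known is exactly what the paper does in its closing steps \eqref{M.62}--\eqref{M.69}. The $\varepsilon^{1/2}$-rescaling you implicitly rely on for the weighted second-derivative term is also carried out there.

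Where your proposal diverges, and where I think it has a genuine gap, is the second step. You reduce everything to a non-local parabolic equation $\rho_{t}-\varepsilon\Delta_{x'}\rho+L\rho=\text{data}$ and then say that the maximum-principle/difference-quotient method of~\cite{17} ``should'' give the Schauder estimate for $\rho$, uniformly in $\varepsilon$. But Brandt's technique in~\cite{17} is fundamentally local: one freezes a point, passes to second differences, and compares against explicit barriers for a \emph{local} differential operator in an enlarged domain. It is not at all clear how to build barriers for the degenerate Dirichlet-to-Neumann operator $L$, whose action at a boundary point depends on the solution of a parabolic problem in the whole half-space. Positivity of $L$ alone is not enough to run the comparison argument.

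The paper avoids this difficulty entirely by \emph{not} eliminating $u^{\pm}$. Its key Lemma~\ref{LM1} applies the Brandt-type argument to the full coupled triple $(u^{+},u^{-},\rho)$: one takes simultaneous second differences $v^{\pm}=\Delta_{i,y_{1}}\Delta_{j,y_{2}}u^{\pm}$ and $r=\Delta_{i,y_{1}}\Delta_{j,y_{2}}\rho$, regards the steps $(y_{1},y_{2})$ as two extra spatial variables so that the differenced system is again a \emph{local} degenerate parabolic system in a box $P^{\pm}\subset R^{N+2}\times(-1,0)$, and proves a coupled comparison principle (Lemma~\ref{LM2}) adapted to the Stefan transmission conditions \eqref{M.20}--\eqref{M.21}. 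Explicit barriers $H^{\pm},S$ built from the auxiliary functions $\varphi,\psi^{\pm},\theta^{\pm}$ in \eqref{M.37}--\eqref{M.39} then give $|r|\le C(\mathcal{N}(T)+\mathcal{M}(T))\,y_{\min}^{1+\beta-\alpha}$, which upon dividing by $y_{1}$ and letting $y_{1}\to 0$ yields the $\varepsilon$-free H\"older bound for $\nabla_{x'}\rho$. Only after this step does the paper invoke Lemma~\ref{LPA1}, in the order opposite to yours. So the missing idea in your plan is precisely this coupled maximum principle in the extended $(x,t,y)$-domain; without it, the non-local reduction you propose leaves you with an operator for which the comparison argument is not available.
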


Subsequent content of this section is the proof of the  theorem
 \ref{TM1}.

Note first that by lemmas \ref{LPA1} and \ref{LSl1} we can without
loss of generality assume that

\begin{equation}
f_{1}^{\pm}\equiv0,\quad f_{2}^{\pm}\equiv0,\label{M.8}%
\end{equation}
since the general case can be reduced to the specified one by the
change of the unknown functions $u^{\pm} = v^{\pm} + w^{\pm}$, where
$v^{\pm}$ are the new unknowns, and $w^{\pm}$ satisfy \eqref{M.2}
with the boundary conditions

\begin{equation}
w^{\pm}|_{x_{N}=0}=f_{2}^{\pm}(x^{\prime},t).\label{M.9}%
\end{equation}

Thus, further we assume that only the function $f(x',t)$ is nonzero
in the righthand sides of \eqref{M.2} - \eqref{M.4}.

In addition, because the right side of the relations \eqref{M.2} -
\eqref{M.4} belong to the classes with zero at the bottom and
because of conditions \eqref{M.5}, \eqref{M.6} we can consider that
the relation \eqref{M.2} - \eqref{M.4} are valid and for $t<-a$,
assuming that all the functions are extended by zero to this domain.

An important point of proving \eqref{M.7} is to prove the following
a priori estimate.

\begin{lemma} \label{LM1}
Under the conditions of the theorem \ref{TM1} and under the
condition \eqref{M.8} the following estimate is valid

\[
\left\langle \nabla_{x^{\prime}}\rho\right\rangle _{x^{\prime},R_{T}^{N-1,a}%
}^{(1+\beta-\alpha)}\leq C_{T}\left(  |\nabla_{x^{\prime}}u^{+}|_{R_{+T}^{N,a}%
}^{(0)}+|\nabla_{x^{\prime}}u^{-}|_{R_{-T}^{N,a}}^{(0)}+|\nabla_{x^{\prime}}%
\rho|_{R_{T}^{N-1,a}}^{(0)}+\mathcal{M}(T)\right)  \equiv
\]%

\begin{equation}
\equiv C_{T}\left(  \mathcal{N}(T)+\mathcal{M}(T)\right)  \leq C_{T}(T+a)^{\delta}\mathcal{U}(T)+C_{T}%
\mathcal{M}(T),\label{M.10}%
\end{equation}
where

\[
\mathcal{N}(T)\equiv|\nabla_{x^{\prime}}u^{+}|_{R_{+T}^{N,a}}^{(0)}+|\nabla_{x^{\prime}%
}u^{-}|_{R_{-T}^{N,a}}^{(0)}+|\nabla_{x^{\prime}}\rho|_{R_{T}^{N-1,a}}^{(0)}.
\]

\end{lemma}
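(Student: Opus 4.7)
The plan is to exploit the reduction \eqref{M.8} to rewrite the Stefan condition \eqref{M.4} as a regularized heat equation for $\rho$ on the hyperplane $\{x_N=0\}$:
\[
\rho_t - \varepsilon\Delta_{x'}\rho \;=\; f \;-\; b^+\,u^+_{x_N}(x',0,t) \;+\; b^-\,u^-_{x_N}(x',0,t),
\]
with the ties $u^\pm|_{x_N=0}=-A^\pm\rho$. By Lemma \ref{LSl2}, the trace $\nabla u^\pm|_{x_N=0}$ lies in $C^{1+\beta-\alpha,(1+\beta-\alpha)/(2-\alpha)}(R_T^{N-1,a})$ with norm at most $C\,\mathcal{U}(T)$; the other terms on the right have norm $\leq C\,\mathcal{M}(T)$.

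My first move would be to follow the idea of \cite{17} and recast the desired Schauder-type seminorm bound as a maximum principle statement. Concretely, I would fix a tangential direction and a step $h>0$, form the scaled tangential second difference $w_h(x',t)=\Delta_{h,x'}^{2}\rho(x',t)/h^{1+\beta-\alpha}$ on $R_T^{N-1,a}$ together with the corresponding second differences $W_h^\pm(x,t)=\Delta_{h,x'}^{2}u^\pm(x,t)/h^{1+\beta-\alpha}$ on $\overline{Q}^N_\pm$. The triple $(w_h,W_h^\pm)$ satisfies the same model system \eqref{M.2}-\eqref{M.5} but with right-hand sides given only by the second differences of $f$, which are bounded in $C^0$ by $C\,\mathcal{M}(T)$ after using the $C^{1+\beta-\alpha,(1+\beta-\alpha)/(2-\alpha)}$-regularity of $f$. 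Applying the maximum principle to each $W_h^\pm$ in the half-spaces (with Dirichlet data $-A^\pm w_h$) reduces the problem to a pointwise inequality on the hyperplane for $w_h$, which is then controlled by a barrier argument that pits $\varepsilon\Delta_{x'}w_h$ and the two Neumann-type terms $W^\pm_{h,x_N}|_{x_N=0}$ against the sup-norms $\mathcal{N}(T)$ and the data $\mathcal{M}(T)$. Taking the supremum over $h$ yields the first inequality in \eqref{M.10}. The last inequality in \eqref{M.10} is a standard consequence of the zero-initial-data contractions \eqref{B.3}-\eqref{B.6} combined with the interpolation \eqref{Sol.1}, which control sup norms of first derivatives of functions vanishing at $t=-a$ by a small fractional power of $T+a$ times the full $C^{2+\gamma,(2+\gamma)/2}_s$ norm.

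The main obstacle will be setting up the barrier/maximum-principle comparison so that the bound is uniform in $\varepsilon\in(0,1)$: one must ensure that the regularization term $\varepsilon\Delta_{x'}w_h$ contributes with a favourable sign on the hyperplane while the normal-derivative traces $W^\pm_{h,x_N}|_{x_N=0}$, which encode the Dirichlet-to-Neumann map of the degenerate half-space problem \eqref{M.2}, are absorbed using only the sup-norms $|\nabla_{x'}u^\pm|^{(0)}$ rather than any higher seminorm of $u^\pm$. This is precisely the point where the method of \cite{17} is essential: it replaces a direct Schauder argument (which would pick up an $\varepsilon$-dependent constant if applied to the heat equation for $\rho$) by a comparison estimate whose constants depend only on the structural coefficients $A^\pm$, $b^\pm$ and not on $\varepsilon$.
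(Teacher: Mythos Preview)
Your strategic picture is right --- this is indeed a maximum-principle argument in the spirit of \cite{17}, applied to tangential second differences of the triple $(u^+,u^-,\rho)$ --- but the proposal as written has a genuine gap at the point where you try to decouple the argument.

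You propose to bound $W_h^\pm$ by the maximum principle in the half-spaces (Dirichlet data $-A^\pm w_h$) and then to control $w_h$ on the hyperplane by a barrier that ``pits $\varepsilon\Delta_{x'}w_h$ and the two Neumann-type terms $W^\pm_{h,x_N}|_{x_N=0}$ against the sup-norms $\mathcal N(T)$''. The trouble is that the Neumann traces
\[
(W_h^\pm)_{x_N}\big|_{x_N=0}=\frac{\Delta^2_{h,x'}\,u^\pm_{x_N}(x',0,t)}{h^{1+\beta-\alpha}}
\]
are \emph{not} controlled by $|\nabla_{x'}u^\pm|^{(0)}$; a bound of the form $|\Delta^2_{h,x'}u^\pm_{x_N}|\leq C\,\mathcal N(T)\,h^{1+\beta-\alpha}$ is equivalent to $u^\pm_{x_N}\in C^{1+\beta-\alpha}_{x'}$, which is part of what you are trying to prove. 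So the hyperplane step is circular as stated, and this is exactly why the constant would otherwise blow up as $\varepsilon\to 0$. (Incidentally, to capture $\langle\nabla_{x'}\rho\rangle^{(1+\beta-\alpha)}$ via second differences of $\rho$ you need the scaling $h^{2+\beta-\alpha}$, not $h^{1+\beta-\alpha}$; but fixing the exponent does not repair the circularity.)

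The paper closes this gap by \emph{not} decoupling. It introduces two independent difference steps $y_1,y_2$ as additional spatial variables, uses the elementary identity $\partial_{x_i}^2\Delta_{i,y_1}\Delta_{j,y_2}u=\partial_{y_1}^2\Delta_{i,y_1}\Delta_{j,y_2}u$ to rewrite the equations as $L^\ast v^\pm=0$ with a nondegenerate $\partial_{y}^2$-part, and then applies a \emph{coupled} maximum principle (Lemma~\ref{LM2}) for the triple $(v^+,v^-,r)$ on boxes $P^\pm\subset R^N_\pm\times[-1,0]\times[0,1]^2$, with the Stefan condition \eqref{M.21} itself playing the role of the boundary condition at $x_N=0$. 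The comparison functions are explicit: $\varphi(y)=y_1y_2(y_1^l+y_2^l)^{-1/l+1}$ and $\psi^\pm(x_N,y)=y_1y_2[(y_1\pm x_N)^l+(y_2\pm x_N)^l]^{-1/l+1}$, together with $\theta^\pm=(y_1^{-1}+y_2^{-1})^{-1}w^\pm(x,t)$ built from a fixed auxiliary solution $w^\pm$. The crucial structural properties are $\pm\partial_{x_N}\psi^\pm|_{x_N=0}\leq -\nu\,y_{\min}^l$ and $L^\ast(\psi^\pm+K\varphi)\geq 0$: the first supplies the favourable sign in the Stefan condition \eqref{M.28} for the supersolution $S$, so the normal-derivative terms are absorbed \emph{by the barrier}, not estimated separately. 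This is what makes the constant independent of $\varepsilon$. Evaluating at the origin and sending $y_1\to 0$ then yields \eqref{M.11}. Your last paragraph on the final inequality of \eqref{M.10} via \eqref{B.3}--\eqref{B.6} is correct.
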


To obtain the last inequality in \eqref{M.10} we use the estimates
\eqref{B.3}- \eqref{B.6}.

\begin{proof}

Denote for brevity, $l=1+\beta-\alpha$ and fix a point $(x'_{0},
t_{0})$ in the set $R^{N-1,a}_T$. In order to maintain the
succession of the notations with the paper \cite{17}, whose method
we're going to apply, without loss of generality, we will assume
that $(x'_{0}, t_{0})=(0,0)$ - this choice is not important, as can
be seen from the following proof. Suppose, further, $O=(x'=0,
x_{N}=0, t=0)$ is the corresponding point in $R^{N, a}_{+T}$. We
show that for every $h\in (0,1) $ and for any $i,j=\overline{1,N-1}$
we have the following inequality

\begin{equation}
|\rho_{x_{i}}(\overrightarrow{e}_{j}h,0)-\rho_{x_{i}}(-\overrightarrow{e}%
_{j}h,0)|\leq C_{T}\left(  \mathcal{N}(T)+\mathcal{M}(T)\right)  h^{l},\label{M.11}%
\end{equation}
where $\overrightarrow{e}_{j}$ is the unit vector of the $Ox_{j}$-
axis.  Since the point $O$ and the step $h\in (0,1)$ in the relation
\eqref{M.11} are arbitrary, the estimate \eqref{M.10} of the lemma
follows from  the estimate \eqref{M.11}.

So, let $y_{1},y_{2}\in [0,1]$, $y\equiv(y_{1},y_{2})$ and let also
$i,j\in \{1,2,...,N-1\}$ are fixed. Consider the differences

\begin{equation}
v^{\pm}(x,t,y_{1},y_{2})=\Delta_{i,y_{1}}\Delta_{j,y_{2}}u^{\pm}(x,t)=\label{M.12}%
\end{equation}%

\[
=u^{\pm}(x+y_{1}\overrightarrow{e}_{i}+y_{2}\overrightarrow{e}_{j}%
,t)-u^{\pm}(x-y_{1}\overrightarrow{e}_{i}+y_{2}\overrightarrow{e}_{j},t)-
\]%

\[
-u^{\pm}(x+y_{1}\overrightarrow{e}_{i}-y_{2}\overrightarrow{e}_{j}%
,t)+u^{\pm}(x+y_{1}\overrightarrow{e}_{i}+y_{2}\overrightarrow{e}_{j},t),
\]
where

\begin{equation}
\Delta_{k,h}u(x,t)\equiv
u(x+h\overrightarrow{e}_{k},t)-u(x-h\overrightarrow
{e}_{k},t).\label{M.12+1}%
\end{equation}
Denote also

\begin{equation}
r(x^{\prime},t,y_{1},y_{2})=\Delta_{i,y_{1}}\Delta_{j,y_{2}}\rho(x^{\prime
},t).\label{M.14}%
\end{equation}

Note that

\begin{equation}
\frac{\partial^{2}v^{\pm}}{\partial x_{i}^{2}}-\frac{\partial^{2}v^{\pm}%
}{\partial y_{1}^{2}}=0,\quad\frac{\partial^{2}v^{\pm}}{\partial x_{j}^{2}%
}-\frac{\partial^{2}v^{\pm}}{\partial y_{2}^{2}}=0.\label{M.15}%
\end{equation}
Therefore in domains $R^{N}_{\pm}\times \{-\infty<t< T\}\times
\{0<y_{1}<1\}\times \{0<y_{2}<1\}$ the functions $v^{\pm}(x,t,y)$
satisfy the equations

\begin{equation}
L^{\ast}v^{\pm}\equiv\frac{\partial v^{\pm}}{\partial t}-\label{M.16}%
\end{equation}%

\[
-(\pm x_{N})^{\alpha}\left(  \sum\limits_{k\neq i,j}\frac{\partial^{2}v^{\pm}%
}{\partial x_{k}^{2}}+\frac{3}{4}\frac{\partial^{2}v^{\pm}}{\partial x_{i}^{2}%
}+\frac{3}{4}\frac{\partial^{2}v^{\pm}}{\partial x_{j}^{2}}+\frac{1}{4}%
\frac{\partial^{2}v^{\pm}}{\partial
y_{1}^{2}}+\frac{1}{4}\frac{\partial ^{2}v^{\pm}}{\partial
y_{2}^{2}}\right)  =0,
\]

Note also, that

\[
|v^{\pm}|=|\Delta_{i,y_{1}}\Delta_{j,y_{2}}u^{\pm}(x,t)|=
\]%

\[
=y_{2}\left|
\Delta_{i,y_{1}}\int\limits_{-1}^{1}u_{x_{j}}^{\pm}(x+\omega
y_{2}\overrightarrow{e}_{j})d\omega\right|
\leq4y_{2}\mathcal{N}(T).
\]
Exactly the same way

\[
|v^{\pm}|\leq4y_{1}\mathcal{N}(T),
\]
and therefore

\begin{equation}
|v^{\pm}|\leq4y_{\min}\mathcal{N}(T),\label{M.16.1}%
\end{equation}
where

\begin{equation}
y_{\min}=\min\left\{  y_{1},y_{2}\right\}  .\label{M.16.2}%
\end{equation}

Similarly, we have

\begin{equation}
|r|\leq4y_{\min}\mathcal{N}(T),\label{M.16.1.1}%
\end{equation}

Denote

\begin{equation}
y=(y_{1},y_{2}),\quad P^{\pm}=\left\{
(x,t,y):|x_{m}|<1,m=\overline
{1,N-1},0<\pm x_{N}<1,\right. \label{M.17}%
\end{equation}%

\[
\left.  -1<t<0,0<y_{k}<1,k=1,2\right\}  .
\]
Denote also

\begin{equation}
\Sigma^{\pm}=\partial P^{\pm}\setminus\left(  \left\{  t=0\right\}
\cup\left\{ x_{N}=0\right\}  \right)  ,\quad\Sigma_{0}=\partial
P^{\pm}\cap\left\{
x_{N}=0\right\} ,\label{M.17.1}%
\end{equation}
that is $\Sigma^{\pm}$ - are parabolic boundaries of the
parallelepipeds  $P^{\pm}$ without their common part $\{x_{N}=0\}$,
and the last will be denoted by $\Sigma_{0}$.

In the parallelepipeds $P^{\pm}$ the functions $v^{\pm}$ and $r$
satisfy the following problem

\begin{equation}
L^{\ast}v^{\pm}=0,\quad(x,t,y)\in P^{\pm},\label{M.18}%
\end{equation}%

\begin{equation}
v^{\pm}|_{\Sigma^{\pm}}=g^{\pm}(x,t,y),\label{M.19}%
\end{equation}%

\begin{equation}
v^{\pm}+A^{\pm}r=0,\quad x_{N}=0,\label{M.20}%
\end{equation}%

\begin{equation}
r_{t}-\varepsilon\Delta_{x^{\prime}}r+b^{+}\frac{\partial
v^{+}}{\partial x_{N}}-b^{-}\frac{\partial v^{-}}{\partial
x_{N}}=F(x,y,t),\quad
x_{N}=0,\label{M.21}%
\end{equation}
where

\begin{equation}
F(x,t,y)=\Delta_{i,y_{1}}\Delta_{j,y_{2}}f,\quad
g^{\pm}(x,t,y)=\Delta
_{i,y_{1}}\Delta_{j,y_{2}}u^{\pm},\label{M.22}%
\end{equation}
and, in view of the assumptions \eqref{M.1},

\[
|F(x,t,y)|\leq2\left\langle f\right\rangle
_{x_{j}}^{(l)}(2y_{2})^{l}.
\]
A similar inequality with replacing $j$ by $i$ and $y_{2}$ by
$y_{1}$ gives similar to \eqref{M.16.1}

\begin{equation}
|F(x,t,y)|\leq C\mathcal{M}(T)y_{\min}^{l}.\label{M.23}%
\end{equation}
Note also that by \eqref{M.16.1},

\begin{equation}
|g^{\pm}(x,t,y)|\leq C\mathcal{N}(T)y_{\min}.\label{M.24}%
\end{equation}

To estimate $v^{\pm}$ and $r$, we're going to apply to the problem
\eqref{M.18} - \eqref{M.21} the maximum principle in the following
form.

\begin{lemma} \label{LM2}

Let functions $H^{\pm}(x,t,y)\in C^{2,1}(P^{\pm})\cap
C^{1,0}(\overline{P}^{\pm})$, $S(x^{\prime},t,y)\in
C^{2,1}(\Sigma_{0})$ satisfy the conditions

\begin{equation}
L^{\ast}H^{\pm}\geq0,\quad(x,t,y)\in P^{\pm},\label{M.25}%
\end{equation}%

\begin{equation}
H^{\pm}|_{\Sigma^{\pm}}\geq0,\label{M.26}%
\end{equation}%

\begin{equation}
H^{\pm}+A^{\pm}S=0,\quad x_{N}=0,\label{M.27}%
\end{equation}%

\begin{equation}
S_{t}-\varepsilon\Delta_{x^{\prime}}S+b^{+}\frac{\partial
H^{+}}{\partial
x_{N}}-b^{-}\frac{\partial H^{-}}{\partial x_{N}}\leq0,\quad x_{N}%
=0.\label{M.28}%
\end{equation}

Then

\begin{equation}
H^{\pm}\geq0,\quad(x,t,y)\in\overline{P}^{\pm};\quad
S\leq0,\quad(x^{\prime
},t,y)\in\Sigma_{0}.\label{M.29}%
\end{equation}

\end{lemma}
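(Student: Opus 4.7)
The plan is to argue by contradiction, running a two-phase weak maximum principle and replacing the classical Hopf boundary point lemma by a barrier argument tailored to the degenerate operator $L^{\ast}$. First I would reduce matters to proving $S \leq 0$ on $\overline{\Sigma_{0}}$: given this, the conditions $H^{\pm}|_{\Sigma^{\pm}} \geq 0$ together with $H^{\pm}|_{\Sigma_{0}} = -A^{\pm} S \geq 0$ combine with the weak maximum principle for $L^{\ast} H^{\pm} \geq 0$ in $P^{\pm}$ (which holds because in the interior $L^{\ast}$ is strictly parabolic) to yield $H^{\pm} \geq 0$ throughout $\overline{P^{\pm}}$.

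Suppose then for contradiction that $M \equiv \sup_{\overline{\Sigma_{0}}} S > 0$, attained at some $P_{0} = (x'_{0},0,y_{0},t_{0})$. Continuity across the edge $\partial \Sigma_{0} \subset \overline{\Sigma^{\pm}}$ together with the coupling \eqref{M.27} forces $S \leq 0$ on $\partial \Sigma_{0}$, so $P_{0}$ is an interior point of $\Sigma_{0}$ in the $x'$, $y$ and lower-$t$ variables (the case $t_{0}=0$ is allowed since the top is excluded from $\Sigma^{\pm}$). The elementary inequalities $S_{t}(P_{0}) \geq 0$ and $\Delta_{x'} S(P_{0}) \leq 0$ then hold, and the coupling shows that $H^{\pm}$ attain their minima $-A^{\pm} M < 0$ on $\overline{P^{\pm}}$ precisely where $S = M$, in particular at $P_{0}$.

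The main obstacle is to upgrade the easy one-sided estimates $\partial_{x_{N}} H^{+}(P_{0}) \geq 0$ and $\partial_{x_{N}} H^{-}(P_{0}) \leq 0$ to strict inequalities, since $L^{\ast}$ degenerates on $\Sigma_{0}$ and the classical Hopf lemma does not apply out of the box. The saving observation is that the linear function $\psi^{\pm}(x) = \pm x_{N}$ is annihilated by $L^{\ast}$ (its second derivatives vanish, so the $(\pm x_{N})^{\alpha}$ factor is harmless), providing an explicit Hopf barrier with non-zero normal derivative at $\Sigma_{0}$. On a small parabolic box $B^{+} \subset \overline{P^{+}}$ meeting $\Sigma_{0}$ in a small neighbourhood of $P_{0}$ (and, after a harmless perturbation, having $P_{0}$ as the unique maximiser of $S$ on the closure of $B^{+} \cap \Sigma_{0}$), the strong maximum principle for $L^{\ast}$ applied in the uniformly parabolic region $\{x_{N} \geq r/2\}$, combined with $S < M$ on the rest of $\Sigma_{0} \cap \partial B^{+}$, gives a positive bound $H^{+} + A^{+} M \geq \epsilon_{0} > 0$ on the portion of the parabolic boundary of $B^{+}$ away from $\Sigma_{0}$. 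Choosing $c' > 0$ with $c' \cdot \mathrm{diam}_{x_{N}}(B^{+}) \leq \epsilon_{0}$, the inequality $H^{+} + A^{+} M \geq c' x_{N}$ holds on the entire parabolic boundary of $B^{+}$ (on the $\Sigma_{0}$-face the right side vanishes and the left side is non-negative), so by the weak maximum principle it persists throughout $B^{+}$. Differentiating in $x_{N}$ at $P_{0}$ yields $\partial_{x_{N}} H^{+}(P_{0}) \geq c' > 0$, and the symmetric argument in $P^{-}$ with barrier $-c' x_{N}$ produces $-\partial_{x_{N}} H^{-}(P_{0}) \geq c' > 0$.

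Substituting these strict inequalities, together with $S_{t}(P_{0}) \geq 0$ and $-\varepsilon \Delta_{x'} S(P_{0}) \geq 0$, into \eqref{M.28} makes the left-hand side strictly positive at $P_{0}$, contradicting the assumed sign. Hence $S \leq 0$ and consequently $H^{\pm} \geq 0$. The boundary case $t_{0}=0$ is accommodated by taking $B^{\pm}$ strictly below $t_{0}$ and invoking the assumed $C^{1,0}$-regularity of $H^{\pm}$ up to $t=t_{0}$ when passing to the limit in the $x_{N}$-derivative at $P_{0}$.
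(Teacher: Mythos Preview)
Your argument is correct and follows exactly the contradiction scheme the paper sketches: a positive maximum of $S$ on $\Sigma_0$ forces simultaneous negative boundary minima of $H^{\pm}$ there via \eqref{M.27}, and the resulting derivative signs conflict with \eqref{M.28}. The paper omits all detail (``standard arguments''), so your barrier construction for the strict Hopf inequality---exploiting that $L^{\ast}(\pm x_N)=0$ despite the degeneracy---is a legitimate fleshing-out; one shortcut worth noting is that the perturbation $(H^{\pm},S)\mapsto(H^{\pm}+A^{\pm}\delta(t+2),\,S-\delta(t+2))$ preserves \eqref{M.25}--\eqref{M.27} while making \eqref{M.28} strictly negative, so the \emph{weak} one-sided normal estimates already give the contradiction and your ``harmless perturbation'' for uniqueness of $P_0$ becomes unnecessary.
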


We do not give a detailed proof of this lemma, since it uses
standard arguments. We only note that the functions $H^{\pm}$ can
not reach a negative minimum at $\{x_{N}=0\}$, as in this case, by
\eqref{M.27}, they would reached a negative minimum simultaneously
and corresponding point would be, again by \eqref{M.27}, a point of
a positive maximum of the function $S$. All this together in the
standard way contradicts the boundary condition \eqref{M.28}.

We shall need the the auxiliary functions $w^{\pm}(x,t)$, defined on

\[
\Pi^{\pm}=\left\{  |x_{m}|\leq1,m=\overline{1,N-1},0\leq \pm
x_{N}\leq1,-1\leq t\leq0\right\}
\]
correspondingly, and such that

\[
L_{xt}^{\ast}w^{\pm}=\frac{\partial w^{\pm}}{\partial t}-
\]%

\begin{equation}
-(\pm x_{N})^{\alpha}\left(  \sum\limits_{k\neq i,j}\frac{\partial^{2}w^{\pm}%
}{\partial x_{k}^{2}}+\frac{3}{4}\frac{\partial^{2}w^{\pm}}{\partial x_{i}^{2}%
}+\frac{3}{4}\frac{\partial^{2}w^{\pm}}{\partial x_{j}^{2}}\right)
=0,\quad
x_{N}\neq0,\label{M.30}%
\end{equation}%

\begin{equation}
w^{\pm}|_{\left\{  |x_{k}|=1\right\}  \cup\left\{  t=-1\right\}
}\geq
\nu>0,\label{M.31}%
\end{equation}%

\begin{equation}
w^{\pm}(0,0)=0,\quad w^{\pm}|_{\Pi^{\pm}}\geq0,\label{M.32}%
\end{equation}%

\begin{equation}
w^{\pm}(x,t)\in C^{2,1}(\Pi^{\pm}\cap\left\{  x_{N}=0\right\}  ).\label{M.33}%
\end{equation}

Such functions can be constructed as follows. Consider for example,
$w^{+}(x,t)$. Let $G^{+}(x,t)$ is a  function from $C^{\infty}$ in
$\overline{R^{N}_{+}} \times (-\infty, \infty)$, such that $G^{+}
\equiv 0$ for $|x|+|t| \leq 1/4 $ and for
  $t \leq -2$, $|x| \geq 2$ and $G^{+}>0$ in the other
points of $\overline{R^{N}_{+}} \times (- \infty, \infty)$. Let
$w^{+}(x,t)$ is the solution of the following initial boundary value
problem in half-space

\[
L_{x,t}^{\ast}w^{+}=0,\quad x_{N}>0, \ t>-2,
\]%

\[
w^{+}|_{x_{N}=0}=G^{+}(x,t)\in C^{\infty},
\]%

\[
w^{+}|_{t=-2}=0.
\]

Lemma \ref{LPA1} implies that the function $w^{+}$ exists in the
appropriate class, and

\begin{equation}
|w^{+}|_{s,\overline{R_{+}^{N}}\times\lbrack-2,0]}^{(2+\gamma)}\leq
C(G^{+}).\label{M.34}%
\end{equation}

Because of the properties of $G^{+}(x,t) $ and by the strong maximum
principle (see \cite{22}), the function $w^{+}$ has all desirable
properties, including \eqref{M.31}.

%%%%%%%%%%%%%%%%%%%%%%%%%%%%%%%%%%%%%%%%%%%%%%%%%%%%%%%%%%%%%%%%
%%, кроме, вообще говоря,
%%\eqref{M.33}. Поэтому положим
%%
%%\begin{equation}
%%w^{+}(x,t)=\int\limits_{R^{N-1}}\omega_{h}(x^{\prime}-z^{\prime})\widetilde
%%{w}^{+}(z^{\prime},x_{N},t)dz^{\prime},\label{M.36}%
%%\end{equation}
%%где $\omega_{h}(z')$ - какое либо бесконечно гладкое $\delta$-
%%образное сглаживающее ядро с достаточно малым параметром $h$.
%%Функция $w^{+}(x,t)$ из \eqref{M.36} обладает уже всеми нужными
%%свойствами \eqref{M.30}- \eqref{M.33}.
%%%%%%%%%%%%%%%%%%%%%%%%%%%%%%%%%%%%%%%%%%%%%%%%%%%%%%%%%%%%%%%%%%%

Now consider the following comparison functions defined on
$\overline{P}^{\pm}$. Denote

\begin{equation}
\varphi(y)=y_{1}y_{2}\left(  y_{1}^{l}+y_{2}^{l}\right)  ^{-\frac{1}{l}%
+1},\label{M.37}%
\end{equation}%

\begin{equation}
\psi^{\pm}(x_{N},y)=y_{1}y_{2}\left[  (y_{1}\pm x_{N})^{l}+(y_{2}\pm x_{N}%
)^{l}\right]  ^{-\frac{1}{l}+1},\quad \pm x_{N}\geq0,\label{M.38}%
\end{equation}%

\begin{equation}
\theta^{\pm}(x,t,y)=%
\genfrac{\{}{.}{0pt}{}{\left(  y_{1}^{-1}+y_{2}^{-1}\right)  ^{-1}%
w^{\pm}(x,t),\quad y_{\min}>0,}{0,\quad y_{\min}=0.}%
\label{M.39}%
\end{equation}

The direct verification shows (cf. \cite{17}), that the functions
$\varphi$ and $\psi^{\pm}$ possess properties

\begin{equation}
\pm\frac{\partial\psi^{\pm}}{\partial x_{N}}|_{x_{N}=0}\leq-\nu y_{\min}%
^{l},\label{M.40}%
\end{equation}%

\begin{equation}
|L^{\ast}\psi^{\pm}|\leq C|x_{N}|^{\alpha}y_{\min}^{-1+l},\label{M.41}%
\end{equation}%

\begin{equation}
\frac{\partial\varphi}{\partial x_{N}}|_{x_{N}=0}=0,\label{M.42}%
\end{equation}%

\begin{equation}
L^{\ast}\varphi\geq\nu|x_{N}|^{\alpha}y_{\min}^{-1+l},\label{M.43}%
\end{equation}%

\begin{equation}
\varphi|_{y_{k}=1}\geq\nu y_{\min}.\label{M.43.1}%
\end{equation}

Thus, if we choose a sufficiently large constant $K>0 $, the
functions

\begin{equation}
h^{\pm}\equiv\psi^{\pm}+K\varphi\label{M.44}%
\end{equation}
will have the properties

\begin{equation}
\pm\frac{\partial h^{\pm}}{\partial x_{N}}|_{x_{N}=0}\leq-\nu y_{\min}%
^{l},\label{M.45}%
\end{equation}%

\begin{equation}
L^{\ast}h^{\pm}\geq\nu|x_{N}|^{\alpha}y_{\min}^{-1+l}>0,\quad(x,t,y)\in
P^{\pm},\label{M.46}%
\end{equation}%

\begin{equation}
h^{\pm}|_{y_{k}=1}\geq\nu y_{\min}.\label{M.46.1}%
\end{equation}

At the same time, the functions $\theta^{\pm}(x,t,y)$ have the
properties

\begin{equation}
L^{\ast}\theta^{\pm}\geq0,\quad(x,t,y)\in P^{\pm},\label{M.47}%
\end{equation}%

\begin{equation}
\left|  \frac{\partial\theta^{\pm}}{\partial x_{N}}\right|
_{x_{N}=0}\leq
Cy_{\min},\label{M.48}%
\end{equation}%

\begin{equation}
\theta^{\pm}|_{\cup_{k}\left\{  |x_{k}|=1\right\}  \cup\left\{
t=-1\right\}
}\geq\nu y_{\min}.\label{M.49}%
\end{equation}

Consider now the following comparison functions

\begin{equation}
H^{\pm}(x,t,y)\equiv A^{\pm}\left[  L_{1}\theta^{\pm}(x,t,y)+L_{2}h^{\pm}%
(x_{N},y)\right]  (N(T)+M(T)),\label{M.50}%
\end{equation}%

\begin{equation}
S(x^{\prime},t,y)=-\left[  L_{1}\theta^{\pm}(x^{\prime},0,t,y)+L_{2}%
h^{\pm}(0,y)\right]  (N(T)+M(T)),\label{M.51}%
\end{equation}
where $L_{1}$ и $L_{2}$ are some positive constants.

Choosing first $L_{1}$ and then $L_{2}$ are sufficiently large, and
using on one hand \eqref{M.23}, \eqref{M.24}, and on the other hand
 \eqref{M.45} - \eqref{M.49}, we see that the triple of the
functions

\[
H^{+}\pm v^{+}(x,t,y),\quad H^{-}\pm v^{-}(x,t,y),\quad S \pm
r(x^{\prime},t,y)
\]
satisfies in $P^{\pm}$ to the conditions of the lemma \ref{LM2}.
Hence,

\[
H^{+}\pm v^{+}\geq0,\quad H^{-}\pm v^{-}\geq0,\quad S\pm r\leq0,
\]
that is

\begin{equation}
|v^{\pm}|\leq CH^{\pm},\quad|r|\leq C|S|,\quad(x,t,y)\in\overline{P}%
^{\pm}.\label{M.52}%
\end{equation}

Taking in \eqref{M.52} $x=0$, $t=0$, in view of
$\theta^{\pm}(0,0,y)=0$, we obtain, for example, for $v^{+}$
similarly \cite{17}

\[
|\Delta_{i,y_{1}}\Delta_{j,y_{2}}u^{+}(0,0)|\leq C\left[
\mathcal{N}(T)+\mathcal{M}(T)\right] y_{1}y_{2}\left(
y_{1}^{l}+y_{2}^{l}\right) ^{-\frac{1}{l}+1}.
\]
Dividing both sides of this relation by $y_{1}$ and taking the limit
with $y_{1} \rightarrow 0$, we obtain

\begin{equation}
\left|  \frac{\partial u^{+}}{\partial x_{i}}(0+y_{2}\overrightarrow{e}%
_{j},0)-\frac{\partial u^{+}}{\partial x_{i}}(0-y_{2}\overrightarrow{e}%
_{j},0)\right|  \leq C\left[  \mathcal{N}(T)+\mathcal{M}(T)\right]  y_{2}^{l},\label{M.53}%
\end{equation}
and similarly

\begin{equation}
\left|  \frac{\partial\rho}{\partial x_{i}}(0+y_{2}\overrightarrow{e}%
_{j},0)-\frac{\partial\rho}{\partial x_{i}}(0-y_{2}\overrightarrow{e}%
_{j},0)\right|  \leq C\left[  \mathcal{N}(T)+\mathcal{M}(T)\right]  y_{2}^{l}.\label{M.54}%
\end{equation}

Since all of the above arguments are valid, as noted, for any
$(x'_{0},t_{0}) \in R^{N-1, a}_{T}$, by the same token the estimate
\eqref{M.10} and the lemma \ref{LM1} are proved.

\end{proof}

We continue the proof of the theorem. It follows from \eqref{M.4},
that

\begin{equation}
\rho_{t}-\varepsilon\Delta_{x^{\prime}}\rho=F(x^{\prime},t)\equiv-b^{+}%
\frac{\partial u^{+}}{\partial x_{N}}+b^{-}\frac{\partial
u^{-}}{\partial
x_{N}}+f,\label{M.55}%
\end{equation}
Moreover, in view of the inequalities \eqref{B.7}, \eqref{B.3},
\eqref{B.6}

\[
|F|_{R_{T}^{N-1}}^{(\gamma)}\leq C\left(  |\nabla u^{+}|_{R_{T}^{N-1,a}%
}^{(\gamma)}+|\nabla u^{-}|_{R_{T}^{N-1,a}}^{(\gamma)}\right)
+C\mathcal{M}(T)\leq
\]%

\begin{equation}
\leq C(T+a)^{\mu}\mathcal{U}(T)+C\mathcal{M}(T),\label{M.56}%
\end{equation}
and

\[
F(x^{\prime},0)\equiv0.
\]

Making in the problem \eqref{M.55}, \eqref{M.5} the change of
variables $x^{\prime} = \varepsilon^{1/2}y$, we obtain the problem

\begin{equation}
\rho_{t}-\Delta_{y}\rho=\widetilde{F}(y,t),\quad(y,t)\in
R_{T}^{N-1,a},\quad
\rho(y,-a)=0,\label{M.57}%
\end{equation}
and

\begin{equation}
\left\langle \widetilde{F}(y,t)\right\rangle
_{t,R_{T}^{N-1,a}}^{(\gamma /2)}=\left\langle
F(x^{\prime},t)\right\rangle _{t,R_{T}^{N-1,a}}^{(\gamma
/2)},\left\langle \widetilde{F}(y,t)\right\rangle
_{y,R_{T}^{N-1,a}}^{(\gamma )}=\varepsilon^{\gamma/2}\left\langle
F(x^{\prime},t)\right\rangle
_{x^{\prime},R_{T}^{N-1,a}}^{(\gamma)}.\label{M.58}%
\end{equation}

It follows from the arguments of \cite{12}, гл.IV, that

\begin{equation}
\left\langle \rho_{y_{i}y_{j}}\right\rangle
_{y,R_{T}^{N-1,a}}^{(\gamma)}\leq C\left\langle
\widetilde{F}(y,t)\right\rangle _{y,R_{T}^{N-1,a}}^{(\gamma
)},\label{M.59}%
\end{equation}%

\begin{equation}
\left\langle \rho_{t}\right\rangle
_{t,R_{T}^{N-1,a}}^{(\gamma/2)}\leq C\left( \left\langle
\widetilde{F}(y,t)\right\rangle _{y,R_{T}^{N-1,a}}^{(\gamma
)}+\left\langle \widetilde{F}(y,t)\right\rangle
_{t,R_{T}^{N-1,a}}^{(\gamma
/2)}\right)  .\label{M.60}%
\end{equation}
Making in \eqref{M.59}, \eqref{M.60} the inverse change of
variables, in view of \eqref{M.58} we obtain

\begin{equation}
\left\langle \rho_{t}(x^{\prime},t)\right\rangle
_{t,R_{T}^{N-1,a}}^{(\gamma
/2)}+\varepsilon\sum\limits_{i,j}\left\langle
\rho_{x_{i}x_{j}}\right\rangle
_{x^{\prime},R_{T}^{N-1,a}}^{(\gamma)}\leq C|F(x^{\prime},t)|_{R_{T}^{N-1,a}%
}^{(\gamma)}.\label{M.61}%
\end{equation}

Thus, in view of the estimate \eqref{M.10} of the lemma  \ref{LM1},
it is proved, that

\[
|\rho_{t}|_{R_{T}^{N-1,a}}^{(\gamma)}+\left\langle \nabla_{x^{\prime}}%
\rho\right\rangle
_{x^{\prime},R_{T}^{N-1,a}}^{(1+\beta-\alpha)}+\varepsilon
|\rho|_{R_{T}^{N-1,a}}^{(2+\gamma)}\leq
C(T+a)^{\mu}\mathcal{U}(T)+C\mathcal{M}(T),
\]
or, in view of \eqref{Sol.1}, \cite{15} and of the finiteness of
$\rho$,

\begin{equation}
|\rho|_{C^{2+\beta-\alpha,\frac{2+\beta-\alpha}{2-\alpha}}(R_{T}^{N-1,a}%
)}+\varepsilon|\rho|_{R_{T}^{N-1,a}}^{(2+\gamma)}\leq C(T+a)^{\mu}%
\mathcal{U}(T)+C\mathcal{M}(T),\label{M.62}%
\end{equation}
where the constant $C$ does not depend on $\varepsilon>0$.

Now, considering $u^{\pm}(x,t)$ as the solution of the
Cauchy-Dirichlet problem \eqref{M.2}, \eqref{M.3}, \eqref{M.5}, by
the lemma \ref{LPA1} and the estimate \eqref{M.62}, we conclude that

\begin{equation}
|u^{+}|_{s,R_{+T}^{N,a}}^{(2+\gamma)}+|u^{-}|_{s,R_{-T}^{N,a}}^{(2+\gamma)}\leq
C(T+a)^{\mu}\mathcal{U}(T)+C\mathcal{M}(T).\label{M.63}%
\end{equation}

It follows that in the condition \eqref{M.4}

\[
\left|  \frac{\partial u^{+}}{\partial x_{N}}\right|  _{C^{1+\beta
-\alpha,\frac{1+\beta-\alpha}{2-\alpha}}(R_{T}^{N-1,a})}+\left|
\frac{\partial u^{-}}{\partial x_{N}}\right|
_{C^{1+\beta-\alpha,\frac{1+\beta-\alpha
}{2-\alpha}}(R_{T}^{N-1,a})}\leq
C(T+a)^{\mu}\mathcal{U}(T)+C\mathcal{M}(T).
\]

Thus, the function $\rho(x',t)$ satisfies the Cauchy problem
\eqref{M.55}, \eqref{M.5} with the right hand side $F$ and the last
has the property

\begin{equation}
F(x^{\prime},-a)=0,\quad\left|  F\right|  _{C^{1+\beta-\alpha,\frac
{1+\beta-\alpha}{2-\alpha}}(R_{T}^{N-1,a})}\leq
C(T+a)^{\mu}\mathcal{U}(T)+C\mathcal{M}(T).\label{M.64}
\end{equation}

Making again in \eqref{M.55}, \eqref {M.5} the change of variables
$x^{\prime}=\varepsilon^{1/2}y$, we arrive at the problem of the
form \eqref{M.57} with $\widetilde {F} $, where the last is such
that

\begin{equation}
\left\langle \widetilde{F}\right\rangle _{t,R_{T}^{N-1,a}}^{(\frac
{1+\beta-\alpha}{2-\alpha})}=\left\langle F\right\rangle _{t,R_{T}^{N-1,a}%
}^{(\frac{1+\beta-\alpha}{2-\alpha})},\quad\left\langle \widetilde
{F}\right\rangle _{y,R_{T}^{N-1,a}}^{(1+\beta-\alpha)}=\varepsilon
^{\frac{1+\beta-\alpha}{2}}\left\langle F\right\rangle _{x^{\prime}%
,R_{T}^{N-1,a}}^{(1+\beta-\alpha)}.\label{M.65}%
\end{equation}
As above, completely similar to \cite{12}, Ch.IV, for solutions of
the problem \eqref{M.57} we have the estimates

\begin{equation}
\left\langle \rho_{t}\right\rangle
_{t,R_{T}^{N-1,a}}^{(\frac{1+\beta-\alpha
}{2-\alpha})}\leq C\left\langle \widetilde{F}\right\rangle _{t,R_{T}^{N-1,a}%
}^{(\frac{1+\beta-\alpha}{2-\alpha})}\leq C(T+a)^{\mu}\mathcal{U}(T)+C\mathcal{M}(T),\label{M.66}%
\end{equation}%

\begin{equation}
\sum\limits_{i,j=1}^{N-1}\left\langle
\rho_{y_{i}y_{j}}\right\rangle
_{y,R_{T}^{N-1,a}}^{(1+\beta-\alpha)}\leq C\left\langle \widetilde
{F}\right\rangle _{y,R_{T}^{N-1,a}}^{(1+\beta-\alpha)},\label{M.67}%
\end{equation}
and we note that in obtaining the estimate \eqref{M.66} the
condition $F(x',-a)=0$ is important.

Proceeding as before and going back to the variables $x'$, we find
from \eqref{M.67} and \eqref {M.65} that

\begin{equation}
\varepsilon\sum\limits_{i,j=1}^{N-1}\left\langle \rho_{x_{i}x_{j}%
}\right\rangle _{x^{\prime},R_{T}^{N-1,a}}^{(1+\beta-\alpha)}\leq
C\left\langle F\right\rangle
_{x^{\prime},R_{T}^{N-1,a}}^{(1+\beta-\alpha)}\leq C(T+a)^{\mu
}\mathcal{U}(T)+C\mathcal{M}(T).\label{M.68}%
\end{equation}

Now combining the estimates \eqref{M.68}, \eqref{M.66}, \eqref{M.63}
and \eqref{M.62}, we find that

\begin{equation}
\mathcal{U}(T)\leq C(T+a)^{\mu}\mathcal{U}(T)+C\mathcal{M}(T).\label{M.69}%
\end{equation}

Taking now in \eqref{M.69} $T=T_{0}$, so that the value of
$T_{0}+a>0$ is sufficiently small, we obtain estimate \eqref{M.7} on
the interval $[-a, T_{0}]$. Considering further the problem
\eqref{M.2} - \eqref{M.6} on the interval
$[-a+(a+T_{0})/2,-a+3(a+T_{0})/2]$ and removing the initial data
with the known functions, that is moving along the axis of $Ot$ up,
exactly as in \cite{12}, Ch.IV, we obtain the assertion of the
theorem \ref{TM1} on an arbitrary time interval $[-a,T]$.

Thus, the theorem \ref{TM1} is proved. $\square$

%=========================================================================
%--------------------------------------------------------------------------

\section{Reduction of the problem \eqref{1.3}-\eqref{1.7} to the problem in the fixed
domain.} \label{s4}

Let $\rho(\omega,t)$ is the unknown function defined in Section
\ref{s1} and parameterizing unknown (free) boundary
$\Gamma_{\rho,T}$ ($\rho(\omega,0) \equiv 0$), and let
$\rho(x,t)=E\rho(\omega,t)$ is the extension of this function to the
whole domain $\overline{\Omega_{T}}$ by the extension operator $E$
from \eqref{P.510}.

We pass in the problem \eqref{1.3}- \eqref{1.7} from the unknown
functions $u^{\pm}(y,\tau)$ to the unknowns
$v^{\pm}(y,\tau)=|u^{\pm}|^{m-1}u^{\pm}(y,\tau)$. Then the relations
\eqref{1.3}-\eqref{1.7} take the form:

\begin{equation}
L_{0}(v^{\pm})v^{\pm}\equiv \frac{\partial v^{\pm}}{\partial
\tau}-a^{\pm}|v^{\pm}|^{\alpha}\nabla_{y}^{2}v^{\pm}(y,\tau)=0, \
(y,\tau)\in \Omega_{\rho,T}^{\pm},
 \label{4.3}
\end{equation}
\begin{equation}
v^{+}(y,\tau)=v^{-}(y,\tau)=0, \ \ (y,\tau)\in \Gamma_{\rho,T},
 \label{4.4}
\end{equation}
\begin{equation}
a^{+}\sum_{i=1}^{N}\cos(\overrightarrow{N},y_{i})v^{+}_{y_{i}}-
a^{-}\sum_{i=1}^{N}\cos(\overrightarrow{N},y_{i})v^{-}_{y_{i}}=k\cos(\overrightarrow{N},\tau)
, \ \ (y,\tau)\in \Gamma_{\rho,T},
 \label{4.5}
\end{equation}
\begin{equation}
v^{\pm}(y,\tau)=|g^{\pm}|^{m-1}g^{\pm}(y,\tau)\equiv
h^{\pm}(y,\tau), \ \ (y,\tau)\in \Gamma_{T}^{\pm},
 \label{4.6}
\end{equation}
\begin{equation}
v^{\pm}(y,0)=|u^{\pm}_{0}|^{m-1}u^{\pm}_{0}(y)\equiv
v^{\pm}_{0}(y), \ \ y \in \overline{\Omega^{\pm}}.
 \label{4.7}
\end{equation}

We make in the problem \eqref{4.3}-\eqref{4.7} the change of
variables $(y,\tau)=e_{\rho}(x,t)$ which is defined in \eqref{4.1}.
Denote for simplicity by the same symbols $v^{\pm}(x, t)$ the
unknown functions after this change of variables, that is,

\[
v^{\pm}(x,t)\equiv v^{\pm}(y,\tau)\circ e_{\rho}(x,t).
\]
Then, in view of the properties of $(y,\tau)=e_{\rho}(x,t)$, in the
variables $(x,t)$ the problem \eqref{4.3}-\eqref{4.7} reduces to the
following problem in the known fixed domains $\Omega_{T}^{\pm}$ for
the unknown functions $v^{+}$, $v^{-}$, $\rho$ (besides
$x$-variables  we use the corresponding coordinates
$(\omega,\lambda)$, which were introduced in \eqref{ab1.1}):

\begin{equation}
L_{\rho}(v^{\pm})v^{\pm}\equiv \frac{\partial v^{\pm}}{\partial
t}-h^{\pm}_{\rho}\rho_{t}
 -|v^{\pm}|^{\alpha}\nabla_{\rho}^{2}v^{\pm}(x,t)=0, \ (x,t)\in
\Omega_{T}^{\pm},
 \label{4.9}
\end{equation}
\begin{equation}
v^{+}(x,t)=v^{-}(x,t)=0, \ \ (x,t)\in \Gamma_{T},
 \label{4.10}
\end{equation}
\begin{equation}
(1+\sum_{i,j=1}^{N-1}m_{ij}(x,\rho)\rho_{\omega_{i}}\rho_{\omega_{j}})(a^{+}\frac{\partial
v^{+}}{\partial \lambda}-a^{-}\frac{\partial v^{-}}{\partial
\lambda})=-k\rho_{t}(1+\rho_{\lambda}), \ \ (x,t)\in \Gamma_{T},
 \label{4.11}
\end{equation}
\begin{equation}
v^{\pm}(x,t)= h^{\pm}(x,t), \ \ (x,t)\in \Gamma_{T}^{\pm},
 \label{4.12}
\end{equation}
\begin{equation}
v^{\pm}(x,0)= v^{\pm}_{0}(x), \ x \in \overline{\Omega^{\pm}}, \ \
\rho(\omega,0)\equiv 0,
 \label{4.13}
\end{equation}
\begin{equation}
\rho(x,t)=E\rho(\omega,t),
 \label{a4.1}
\end{equation}
where $\nabla_{\rho}\equiv \mathcal{E}_{\rho}\nabla_{x}$,
and the matrix$\mathcal{E}_{\rho}$ is the conjugate and inverse to Jacobi matrix
of the mapping \eqref{4.1} for $t=const$, $m_{ij}(x,\rho)$ are some given smooth functions
of their arguments, and

\begin{equation}
h^{\pm}_{\rho}(x,t)\equiv \frac{\partial v^{\pm}}{\partial
\lambda}\frac{1}{1+\rho_{\lambda}}.
 \label{4.14}
\end{equation}
Note that the last definition is legitimate, since the function $\rho(x,t)$ is not identically
zero only if $x\in\mathcal{N}$, where the coordinates $(\omega,\lambda) \equiv (\omega_{x},\lambda_{x})$
of the point $x$ are defined,  and the coordinate $\lambda$ is independent of the choice of local
coordinates $\omega $ (we use the index $(\omega_{x}, \lambda_{x})$ to distinguish these
coordinates for a point $x$ from the corresponding coordinates $(\omega_{y},\lambda_{y})$ for a point $y$).

Below we explain the derivation of the relations \eqref{4.9}-\eqref{4.13}, here we note the following.
The relation \eqref{4.11} contains the expression

\[
S_{\rho}\equiv S_{\rho}(\omega,\rho,\rho_{\omega})\equiv (1+\sum_{i,j=1}^{N-1}m_{ij}(x,\rho)\rho_{\omega_{i}}\rho_{\omega_{j}}),
\]
which is explicitly expressed in the local coordinates $\omega$.
But, in fact, the expression $S_{\rho}$ is strictly a function of the points of the surface $\Gamma_{T}$ and
its values at the points of $\Gamma_{T}$ does not depend on a choice of local coordinates $\omega $.
Indeed, first, for any choice of local coordinates $\omega$
the condition \eqref{4.11} is equivalent to \eqref{4.5}, which is independent of a choice of local coordinates,
and, secondly, all the other factors and the terms but $S_{\rho}$ in the relation \eqref{4.11} are invariant
with respect to a choice of $\omega$ and they are the function of the point of the surface $\Gamma_{T}$ only.
Hence, the expression $S_{\rho}$, as a function of the point of the surface $\Gamma_{T}$, is invariant on
a choice of local coordinates $\omega$ as well. And thus, the map $\rho \rightarrow S_{\rho}(\omega,\rho,\rho_{\omega})$ defines a nonlinear operator, acting on functions defined on $\Gamma_{T}$. This operator is invariant under choice of
local coordinates $\omega $, it acts in the space of functions on $\Gamma_{T}$ and has a certain expression $S_{\rho}(\omega,\rho,\rho_{\omega})$ for every particular choice of the local coordinates $\omega$.

Further, the expression $\frac{\partial v^{\pm}}{\partial t}-h^{\pm}_{\rho}\rho_{t}$ is the recalculated in the variables $(x,t)$ derivative $\frac{\partial v^{\pm}}{\partial \tau}$ after the change of variables \eqref{4.1}:

\[
\frac{\partial v^{\pm}}{\partial \tau}=\frac{\partial
v^{\pm}}{\partial t}\frac{\partial t}{\partial
\tau}+\sum_{i=1}^{N-1}\frac{\partial v^{\pm}}{\partial
\omega_{xi}}\frac{\partial \omega_{xi}}{\partial
\tau}+\frac{\partial v^{\pm}}{\partial \lambda_{x}}\frac{\partial
\lambda_{x}}{\partial \tau}.
\]
Here in fact

\begin{equation}
\frac{\partial t}{\partial \tau}=1, \ \ \frac{\partial
\omega_{xi}}{\partial \tau}=0,
 \label{4.15}
\end{equation}
and for the value of $\frac{\partial \lambda_{x}}{\partial \tau}$, due to the relation

\[
\lambda_{x}=\lambda_{y}-\rho(x,t)\circ e_{\rho}^{-1},
\]
and taking into account \eqref{4.15}, we have

\[
\frac{\partial \lambda_{x}}{\partial
\tau}=-\frac{\partial}{\partial \tau}[\rho(x,t)\circ
e_{\rho}(x,t)^{-1}]=
\]
\[
-\frac{\partial \rho}{\partial t}\frac{\partial t}{\partial
\tau}-\frac{\partial \rho}{\partial \lambda_{x}}\frac{\partial
\lambda_{x}}{\partial \tau}-\sum_{i=1}^{N-1}\frac{\partial
\rho}{\partial \omega_{xi}}\frac{\partial \omega_{xi}}{\partial
\tau}= -\rho_{t}-\rho_{\lambda_{x}}\frac{\partial
\lambda_{x}}{\partial \tau}.
\]
So in the variables $x$ and $t$

\begin{equation}
\frac{\partial \lambda_{x}}{\partial
\tau}=-\rho_{t}/(1+\rho_{\lambda_{x}}).
 \label{4.16}
\end{equation}
Thus, it follows from \eqref{4.15} and \eqref{4.16}, that

\[
\frac{\partial v^{\pm}}{\partial \tau}\circ
e_{\rho}=\frac{\partial v^{\pm}}{\partial t}-[\frac{\partial
v^{\pm}}{\partial
\lambda}/(1+\rho_{\lambda})]\rho_{t}=\frac{\partial
v^{\pm}}{\partial t}-h^{\pm}_{\rho}\rho_{t}.
\]

We explain further the transition from the condition \eqref{4.5} to
the condition \eqref{4.11} under the change of variables
\eqref{4.1}, as we shall need in the future the exact explicit form
of this condition. Define in the neighborhood $\mathcal{N}_{T}$ of
the surface $\Gamma_{T}$ the function

\begin{equation}
\Phi_{\rho}(y,\tau)=\lambda_{x}\circ
e_{\rho}^{-1}(y,\tau)=\lambda_{y}-\rho(x,t)\circ
e_{\rho}^{-1}(y,\tau)=\lambda(y)-\rho(y,\tau),
 \label{4.17}
\end{equation}
where for simplicity we have retained for the function $\rho(x,t)
\circ e_{\rho}^{-1}(y,\tau)$ the same notation $\rho(y,\tau)$. By
the definition $\pm\Phi_{\rho}(y,\tau)>0$ for $(y,\tau)\in
\Omega^{\pm}_{\rho,T}$ and $\Phi_{\rho}(y,\tau)=0$ for $(y,\tau)\in
\Gamma_{\rho,T}$. Hence in \eqref{4.5}

\[
\cos(\overrightarrow{N},y_{i})=\frac{\Phi_{\rho
y_{i}}}{|\nabla_{(y,\tau)}\Phi_{\rho}|}, \ \
\cos(\overrightarrow{N},\tau)=\frac{\Phi_{\rho
\tau}}{|\nabla_{(y,\tau)}\Phi_{\rho}|}.
\]
Therefore, the relation \eqref{4.5} can be written as follows

\begin{equation}
a^{+}(\nabla_{y}v^{+},\nabla_{y}\Phi_{\rho})-
a^{-}(\nabla_{y}v^{-},\nabla_{y}\Phi_{\rho})=k\Phi_{\rho \tau}.
 \label{4.18}
\end{equation}
Under the change of variables \eqref{4.1} the right hand side of
\eqref{4.18}, due to the definition of $\Phi_{\rho}$, takes the form

\begin{equation}
k\Phi_{\rho \tau} = k\frac{\partial \lambda_{x}}{\partial
\tau}=-k\rho_{t}/(1+\rho_{\lambda_{x}}),
 \label{4.19}
\end{equation}
owing to \eqref{4.16}.

On the other hand, under the change of variables \eqref{4.1}

\begin{equation}
(\nabla_{y}v^{\pm},\nabla_{y}\Phi_{\rho})\circ
e_{\rho}(x,t)=(\nabla_{\rho}v^{\pm},\nabla_{\rho}\lambda_{x}).
 \label{4.20}
\end{equation}
Denote by $\Lambda(x)$ the transition matrix from the gradient with
respect to the variables $x$ to the gradient with respect to
variables $(\omega_{x},\lambda_{x})$, that is

\begin{equation}
\nabla_{x}=\Lambda(x)\nabla_{(\lambda_{x},\omega_{x})} \ \
(\nabla_{y}=\Lambda(y)\nabla_{(\lambda_{y},\omega_{y})}),
 \label{4.21}
\end{equation}
where
\begin{equation}
\Lambda(x)=\left(
\begin{array}
[c]{cccc}%
\frac{\partial\lambda}{\partial x_{1}} &
\frac{\partial\omega_{1}}{\partial
x_{1}} & ... & \frac{\partial\omega_{N-1}}{\partial x_{1}}\\
... & ... & ... & ...\\
\frac{\partial\lambda}{\partial x_{N}} &
\frac{\partial\omega_{1}}{\partial
x_{N}} & ... & \frac{\partial\omega_{N-1}}{\partial x_{N}}%
\end{array}
\right),
 \label{a4.21}
\end{equation}
and similarly for the variables $y$. Then in the variables $(x,t)$

\[
(\nabla_{\rho}v^{\pm},\nabla_{\rho}\lambda_{x})=
(\mathcal{E}_{\rho}\Lambda\nabla_{(\lambda,\omega)}v^{\pm},
\mathcal{E}_{\rho}\Lambda\nabla_{(\lambda,\omega)}\lambda_{x}).
\]
Note that
$\nabla_{(\lambda_{x},\omega_{x})}\lambda_{x}=\{1,0,...,0\}$, and
also  $v^{\pm}\equiv 0$ on $\Gamma$, hence $\partial
v^{\pm}/\partial \omega_{i}=0$, and therefore
\[
\nabla_{(\lambda_{x},\omega_{x})}v^{\pm}=\{\frac{\partial
v^{\pm}}{\partial \lambda_{x}},0,...,0\}=\frac{\partial
v^{\pm}}{\partial \lambda_{x}}\{1,0,...,0\}=\frac{\partial
v^{\pm}}{\partial
\lambda_{x}}\nabla_{(\lambda_{x},\omega_{x})}\lambda_{x}.
\]
Thus we obtain

\begin{equation}
(\nabla_{y}v^{\pm},\nabla_{y}\Phi_{\rho})\circ
e_{\rho}(x,t)=(\nabla_{\rho}v^{\pm},\nabla_{\rho}\lambda_{x})=
\frac{\partial v^{\pm}}{\partial
\lambda_{x}}(\nabla_{\rho}\lambda_{x}, \nabla_{\rho}\lambda_{x}).
 \label{4.22}
\end{equation}
On the other hand, due to the definition of $\Phi_{\rho}(y,\tau)$,

\begin{equation}
(\nabla_{\rho}\lambda_{x},
\nabla_{\rho}\lambda_{x})=(\nabla_{y}(\lambda_{x}\circ
e_{\rho}^{-1}),\nabla_{y}(\lambda_{x}\circ e_{\rho}^{-1}))\circ
e_{\rho}=(\nabla_{y}\Phi_{\rho},\nabla_{y}\Phi_{\rho})\circ
e_{\rho}.
 \label{a4.22}
\end{equation}
Using introduced in \eqref{4.21} matrix $\Lambda(y)$, we have

\[
(\nabla_{y}\Phi_{\rho},\nabla_{y}\Phi_{\rho})=
(\Lambda(y)\nabla_{(\lambda_{y},\omega_{y})}\Phi_{\rho},\Lambda(y)\nabla_{(\lambda_{y},\omega_{y})}\Phi_{\rho})=
\]
\begin{equation}
=(\nabla_{(\lambda_{y},\omega_{y})}\Phi_{\rho},\Lambda(y)^{*}\Lambda(y)\nabla_{(\lambda_{y},\omega_{y})}\Phi_{\rho}).
 \label{4.23}
\end{equation}
First, by the definition of $\Phi_{\rho}$,

\[
\frac{\partial \Phi_{\rho}}{\partial
\lambda_{y}}=\frac{\partial}{\partial
\lambda_{y}}(\lambda_{y}-\rho(y,\tau))=1-\rho_{\lambda_{y}},
\]
\begin{equation}
\frac{\partial \Phi_{\rho}}{\partial
\omega_{yi}}=\frac{\partial}{\partial
\omega_{yi}}(\lambda_{y}-\rho(y,\tau))=-\rho_{\omega_{yi}}.
 \label{4.24}
\end{equation}
In addition, since the coordinate $\lambda_{y}$ is counted by the
normal to $\Gamma$, and $\omega_{yi}$ are coordinates on the surface
$\Gamma$, then

\[
(\nabla_{y}\lambda(y),\nabla_{y}\lambda(y))=1, \ \
(\nabla_{y}\lambda(y),\nabla_{y}\omega_{i}(y))=0, \ i=1,...,N-1.
\]
Therefore the matrix $\Lambda^{*}(y)\Lambda(y)$ has the form
%==============================================================
\begin{equation}
\Lambda^{\ast}(y)\Lambda(y)=\left(
\begin{array}
[c]{ccccc}%
1 & 0 & 0 & ... & 0\\
0 & m_{11} & m_{12} & ... & m_{1(N-1)}\\
... & ... & ... & ... & ...\\
0 & m_{(N-1)1} & m_{(N-1)2} & ... & m_{(N-1)(N-1)}%
\end{array}
\right)  , \label{4.25}%
\end{equation}
where
\begin{equation}
m_{ij}=m_{ji}=(\nabla_{y}\omega_{i}(y),\nabla_{y}\omega_{j}(y))- \label{4.26}%
\end{equation}
are some smooth functions.

Thus,
\[
(\nabla_{(\lambda_{y},\omega_{y})}\Phi_{\rho},\Lambda^{\ast}(y)\Lambda
(y)\nabla_{(\lambda_{y},\omega_{y})}\Phi_{\rho})=
\]%
\begin{equation}
=(1-\rho_{\lambda_{y}})^{2}+\sum\limits_{i,j=1}^{N-1}m_{ij}(y)\rho
_{\omega_{yi}}\rho_{\omega_{yj}}. \label{4.27}%
\end{equation}

Make now in \eqref{4.27} the change of variables \eqref{4.1}, and
recalculate the derivatives of $\rho$ with respect to
$(\lambda_{y},\omega_{y})$ in terms of the derivatives with respect
to $(\lambda_{x},\omega_{x})$. We have

\begin{equation}
\rho_{\lambda_{y}}\circ e_{\rho}=\rho_{t}\frac{\partial
t}{\partial\lambda
_{y}}+\rho_{\lambda_{x}}\frac{\partial\lambda_{x}}{\partial\lambda_{y}}%
+\sum\limits_{i=1}^{N-1}\rho_{\omega_{xi}}\frac{\partial\omega_{xi}}%
{\partial\lambda_{y}}. \label{4.28}%
\end{equation}
It follows from the definition of the mapping $e_{\rho}$ that

\begin{equation}
\frac{\partial t}{\partial\lambda_{y}}=0,\quad\frac{\partial\omega_{xi}%
}{\partial\lambda_{y}}=0. \label{4.29}%
\end{equation}
At the same time by \eqref{4.28}, \eqref{4.29}

\[
\frac{\partial\lambda_{x}}{\partial\lambda_{y}}=1-\rho_{\lambda_{y}}%
=1-\rho_{\lambda_{x}}\frac{\partial\lambda_{x}}{\partial\lambda_{y}},
\]
that is

\begin{equation}
\frac{\partial\lambda_{x}}{\partial\lambda_{y}}=\frac{1}{1+\rho_{\lambda_{x}}%
}. \label{4.30}%
\end{equation}
Therefore by \eqref{4.28}, \eqref{4.29} and \eqref{4.30}

\begin{equation}
\rho_{\lambda_{y}}\circ
e_{\rho}=\frac{\rho_{\lambda_{x}}}{1+\rho_{\lambda
_{x}}}. \label{4.31}%
\end{equation}
Further,

\begin{equation}
\rho_{\omega_{yi}}\circ e_{\rho}=\rho_{t}\frac{\partial
t}{\partial\omega
_{yi}}+\rho_{\lambda_{x}}\frac{\partial\lambda_{x}}{\partial\omega_{yi}}%
+\sum\limits_{j=1}^{N-1}\rho_{\omega_{xi}}\frac{\partial\omega_{xj}}%
{\partial\omega_{yi}}, \label{4.32}%
\end{equation}
and

\begin{equation}
\frac{\partial t}{\partial\omega_{yi}}=0,\quad\frac{\partial\omega_{xj}%
}{\partial\omega_{yi}}=\delta_{ij},\quad i,j=1,...,N-1. \label{4.33}%
\end{equation}
At the same time

\[
\frac{\partial(\lambda_{x}\circ
e_{\rho})}{\partial\omega_{yi}}=\left[
\frac{\partial}{\partial\omega_{yi}}(\lambda_{y}-\rho(y,\tau))\right]
\circ e_{\rho}=-\rho_{\omega_{yi}}\circ e_{\rho},
\]
That is by virtue of \eqref{4.32} and \eqref{4.33},
\begin{equation}
\rho_{\omega_{yi}}\circ
e_{\rho}=\rho_{\lambda_{x}}(-\rho_{\omega_{yi}}\circ
e_{\rho})+\rho_{\omega_{xi}}, \label{4.34}%
\end{equation}
hence by \eqref{4.34},
\begin{equation}
\rho_{\omega_{yi}}\circ
e_{\rho}=\frac{\rho_{\omega_{xi}}}{1+\rho_{\lambda
_{x}}}. \label{4.35}%
\end{equation}

Thus, it follows from  \eqref{4.22}, \eqref{4.27}, \eqref{4.31} and
\eqref{4.35} that in \eqref{4.22}
\begin{equation}
(\nabla_{\rho}\lambda_{x},\nabla_{\rho}\lambda_{x})=\frac{1}{(1+\rho
_{\lambda_{x}})^{2}}\left[
1+\sum\limits_{i,j=1}^{N-1}m_{ij}(x,\rho
)\rho_{\omega_{xi}}\rho_{\omega_{xj}}\right]  . \label{4.36}%
\end{equation}

Finally, the relation \eqref{4.11} follows from the relations
\eqref{4.18}, \eqref{4.19}, \eqref{4.22} and \eqref{4.36}.

\section{The linearization of the problem. }\label{s5}

Our goal in this section is  the extraction of the principal linear
part of the problem \eqref{4.9}-\eqref{a4.1} in terms of the
deviation of the unknown functions $(v^{+},v^{-},\rho)$ from
functions constructed from the initial data and satisfying
\eqref{4.9}-\eqref{a4.1} for $t=0$, as it was done in \cite{2},
\cite{5}.

Note that from the equations \eqref{4.9}, \eqref{4.11} and from the
initial data \eqref{4.13} we can calculate the derivatives with
respect to time $\partial v^{\pm}/\partial t$ and $\partial
\rho/\partial t$ at $t=0$:
\begin{equation}
\frac{\partial\rho}{\partial t}(\omega,0)=\rho_{1}(\omega)\equiv\frac{1}%
{k}(a^{+}\frac{\partial
v_{0}^{+}}{\partial\lambda}-a^{-}\frac{\partial
v_{0}^{-}}{\partial\lambda})|_{\Gamma}, \label{5.1}%
\end{equation}%

\begin{equation}
\frac{\partial v^{\pm}}{\partial
t}(x,0)=v_{1}^{\pm}(x)\equiv\frac{\partial
v_{0}^{\pm}}{\partial\lambda}\rho_{1}+a^{\pm}\left|
v_{0}^{\pm}(x)\right|  ^{\alpha
}\nabla^{2}v_{0}^{\pm}(x), \label{5.2}%
\end{equation}
and, in view of the assumptions \eqref{1.10}, \eqref{1.11},
\begin{equation}
\rho_{1}(\omega)\in C^{1+\beta'-\alpha}(\Gamma), \beta'  \equiv \gamma'(1-\alpha/2)>\beta, \quad v_{1}^{\pm}(x)\in
C^{\gamma'}_{s}(\overline{\Omega^{\pm}}). \label{5.3}%
\end{equation}

Completely analogous to \cite{12}, Ch.IV, on the  base of results
\cite{12.1} on the solvability of the Cauchy-Dirichlet problem for
degenerate equations we construct such functions
$w^{\pm}(x,t)\in C_{s}^{2+\gamma^{\prime},1+\gamma^{\prime}%
/2}(\overline{\Omega_{T}^{\pm}})$ that

\begin{equation}
|  w^{\pm}| _{s,\overline{\Omega_{T}^{\pm}}}^{(2+\gamma
^{\prime},1+\gamma^{\prime}/2)}\leq C\left(  \left|
v_{0}^{\pm}\right| _{s,\overline{\Omega^{\pm}}}^{(2+\gamma')}+\left|
v_{1}^{\pm}\right| _{s,\overline{\Omega^{\pm}}}^{(\gamma')}\right)
\leq C\left|  v_{0}^{\pm
}\right|  _{s,\overline{\Omega^{\pm}}}^{(2+\gamma')} \label{5.4}%
\end{equation}
and

\begin{equation}
w^{\pm}(x,0)=v_{0}^{\pm}(x),\frac{\partial w^{\pm}}{\partial t}(x,0)=v_{1}%
^{\pm}(x),w^{\pm}(x,t)|_{\Gamma_{T}}=0,w^{\pm}(x,t)|_{\Gamma_{T}^{\pm}}%
=h^{\pm}. \label{5.5}%
\end{equation}
In addition, just as described in \cite{12}, Chapter 4, there is a
such function  $\sigma(\omega,t)\in C^{3+\beta'-\alpha,1+\frac{1+\beta'-\alpha}{2}%
}(\Gamma_{T})$ that

\begin{equation}
\left|  \sigma\right|
_{\Gamma_{T}}^{(3+\beta'-\alpha,\frac{3+\beta'-\alpha}{2})}\leq C(\left|
v_{0}^{+}\right|
_{s,\overline{\Omega^{+}}}^{(2+\gamma')}+\left|
v_{0}^{-}\right|  _{s,\overline{\Omega^{-}}}^{(2+\gamma')}) \label{5.6}%
\end{equation}
and

\begin{equation}
\sigma(\omega,0)=\rho(\omega,0)=0,\quad\frac{\partial\sigma}{\partial
t}(\omega,0)=\rho_{1}(\omega). \label{5.7}%
\end{equation}
Moreover, by the method described in \cite{12}, Ch.IV, the function
$\sigma(\omega,t)$ can be extended with the class and with the
inequality \eqref{5.6} to a function defined in
$\overline{\Omega_{T}}$ which is non-zero only in the neighborhood
$\mathcal{N} \times [0,T]$ of the surface $\Gamma_{T}$.

The linearization of the relations \eqref{4.9}-\eqref{a4.1} consists
in the following (we describe the general scheme of the arguments -
the exact formulations will be given below). We denote the space

\begin{equation}
P^{2+\beta-\alpha}(\Gamma_{T})=\{\rho: \rho\in
C^{2+\beta-\alpha,\frac{2+\beta-\alpha}{2-\alpha}}(\Gamma_{T}), \
\rho_{t}\in
C^{1+\beta-\alpha,\frac{1+\beta-\alpha}{2-\alpha}}(\Gamma_{T})\}
\label{Dop.1}
\end{equation}
with the norm

\begin{equation}
|\rho|_{P^{2+\beta-\alpha}(\Gamma_{T})}\equiv
|\rho|_{C^{2+\beta-\alpha,\frac{2+\beta-\alpha}{2-\alpha}}(\Gamma_{T})}+
|\rho_{t}|_{C^{1+\beta-\alpha,\frac{1+\beta-\alpha}{2-\alpha}}(\Gamma_{T})}.
\label{Dop.2}
\end{equation}

Denote also

\begin{equation}
\psi=(v^{+},v^{-},\rho)\in \mathcal{H}\equiv C_{s}^{2+\gamma,1+\gamma/2}%
(\overline{\Omega_{T}^{+}})\times C_{s}^{2+\gamma,1+\gamma/2}%
(\overline{\Omega_{T}^{-}})\times P^{2+\beta-\alpha}(\Gamma_{T}),
\label{5.9}%
\end{equation}
\[
\psi_{0}=(w^{+},w^{-},\sigma),
\]
and represent the the relations \eqref{4.9}-\eqref{a4.1} as

\begin{equation}
F(\psi)=0 \label{5.10}%
\end{equation}
with some non-linear operator of $\psi$. Keeping essentially in mind
the application of Newton's method, we represent the relation
\eqref{5.10} as

\[
F^{\prime}(\psi_{0})(\psi-\psi_{0})=-F(\psi_{0})+[F(\psi_{0})+F^{\prime}%
(\psi_{0})(\psi-\psi_{0})-F(\psi)]\equiv
\]%
\begin{equation}
\equiv f_{0}+G(\psi-\psi_{0}), \label{5.11}%
\end{equation}
where $F'(\psi_{0})$ is the  Frechet derivative of $F(\psi)$ at the
point $\psi_{0}$. In this case, as the new unknown we consider the
difference

\begin{equation}
\varphi=\psi-\psi_{0}=(v^{+}-w^{+},v^{-}-w^{-},\rho-\sigma),
\label{5.12}
\end{equation}
which belongs to the spaces with zero, that is,

\begin{equation}
\varphi\in \mathcal{H}_{0}\equiv
C_{0,s}^{2+\gamma,1+\gamma/2}(\overline{\Omega
_{T}^{+}})\times C_{0,s}^{2+\gamma,1+\gamma/2}(\overline{\Omega_{T}^{-}%
})\times P_{0}^{2+\beta-\alpha}(\Gamma_{T}). \label{5.13}%
\end{equation}
By the construction of the element $\psi_{0}=(w^{+},w^{-},\sigma)$,
it has an increased smoothness ($\gamma'>\gamma$) and satisfies the
relation $F(\psi_{0})=0$ for $t=0$. Therefore, using the
inequalities \eqref{B.3}-\eqref{B.6}, we can estimate

\begin{equation}
\left\|  f_{0}\right\|  =\left\|  -F(\psi_{0})\right\|  \leq
CT^{\mu}. \label{5.14}%
\end{equation}

Below we show that the operator $F'(\psi_{0})$ has the bounded
inverse in the appropriate spaces, so that the equation \eqref{5.11}
can be rewritten as

\[
\varphi=[F^{\prime}(\psi_{0})]^{-1}f_{0}+[F^{\prime}(\psi_{0})]^{-1}%
G(\varphi)\equiv
\]%
\begin{equation}
\equiv h_{0}+H(\varphi)\equiv K(\varphi), \label{5.15}%
\end{equation}
where by \eqref{5.14}

\begin{equation}
\left\|  h_{0}\right\|  \leq CT^{\mu}, \label{5.16}%
\end{equation}
and the operator $H(\varphi)$ is the "quadratic" with respect to
$\varphi$ by the smoothness of $F(\psi)$ in its argument and by the
definition of $G(\psi-\psi_{0})=G(\varphi)$ in \eqref{5.11}:

\begin{equation}
\left\|  H(\varphi)\right\|  \leq C\left\|  \varphi\right\|
^{2},\left\| H(\varphi_{2})-H(\varphi_{1})\right\|  \leq C(\left\|
\varphi_{1}\right\| +\left\|  \varphi_{2}\right\|  )\left\|
\varphi_{2}-\varphi_{1}\right\|  .
\label{5.17}%
\end{equation}
For sufficiently small $T>0$ it follows from  \eqref{5.16} and
\eqref{5.17} that the operator $K(\varphi)$ maps some small ball
$B_{r}\subset \mathcal{H}_{0}$ with a small $r$ into itself and
$K(\varphi)$ is a contractive there.  The only fixed point of this
operator gives, obviously, the solution of the original problem.

Thus, our goal now is to write the problem \eqref{4.9}-\eqref{a4.1}
as \eqref{5.11}.

Denote

\bigskip%
\begin{equation}
\theta^{\pm}=v^{\pm}-w^{\pm},\quad\delta=\rho-\sigma. \label{5.18}%
\end{equation}

\begin{lemma} \label{L5.1}
The problem \eqref{4.9}-\eqref{a4.1} can be represented as a problem
for the unknown functions $\theta^{\pm}$ and $\delta$ as follows

\[
\frac{\partial\theta^{\pm}}{\partial t}-\left|  u_{0}^{\pm}\right|
^{\alpha }\nabla^{2}\theta^{\pm}-\frac{\partial
w^{\pm}}{\partial\lambda}\left(
\frac{\partial\delta}{\partial t}-\left|  u_{0}^{\pm}\right|  ^{\alpha}%
\nabla^{2}\delta\right)  =
\]%

\begin{equation}
=F_{1}^{\pm}(x,t;\theta,\delta)+F_{2}^{\pm}(x,t;\theta,\delta)\equiv
F_{1}^{\pm}(x,t;\varphi)+F_{2}^{\pm}(x,t;\varphi),(x,t)\in\Omega_{T}^{\pm},
\label{5.19}%
\end{equation}%

\begin{equation}
\theta^{+}=\theta^{+}=0,\quad(x,t)\in\Gamma_{T}, \label{5.20}%
\end{equation}%

\[
k\delta_{t}+[a^{+}\frac{\partial\theta^{+}}{\partial\lambda}-a^{-}%
\frac{\partial\theta^{-}}{\partial\lambda}]-\delta_{\lambda}[a^{+}%
\frac{\partial w^{+}}{\partial\lambda}-a^{-}\frac{\partial w^{-}}%
{\partial\lambda}]=
\]%

\begin{equation}
=F_{3}(x,t;\varphi)+F_{4}(x,t;\varphi),\quad(x,t)\in\Gamma_{T}, \label{5.21}%
\end{equation}%

\begin{equation}
\theta^{\pm}=0,\quad(x,t)\in\Gamma_{T}^{\pm}, \label{5.22}%
\end{equation}%

\begin{equation}
\theta^{\pm}(x,0)=0,\quad\delta(\omega,0)=0, \label{5.23}%
\end{equation}%

\begin{equation}
\delta(x,t)=E\delta(\omega,t), \label{a5.23}%
\end{equation}
where for arbitrary $\varphi=(\theta^{+},\theta^{-},\delta)\in
B_{r}\subset \mathcal{H}_{0}$, $r<\gamma_{0}/2$ in the righthand
sides $F_{i}$ of the relations \eqref{5.19}- \eqref{5.23} all the
functions $F_{i}$ vanish at $t=0$ and the following estimates are
valid

\begin{equation}
|  F_{1}^{\pm}(x,t;\varphi)|  _{s,\overline{\Omega_{T}^{\pm}}%
}^{(\gamma)}\leq CT^{\mu}, \label{5.24}%
\end{equation}%

\begin{equation}
| F_{1}^{\pm}(x,t;\varphi_{2})-F_{1}^{\pm}(x,t;\varphi_{1})|
_{s,\overline{\Omega_{T}^{\pm}}}^{(\gamma)}\leq CT^{\mu}\left\|
\varphi_{2}-\varphi_{1}\right\|  _{\mathcal{H}}, \label{5.25}%
\end{equation}%

\begin{equation}
| F_{2}^{\pm}(x,t;\varphi)|  _{s,\overline{\Omega_{T}^{\pm}}%
}^{(\gamma)}\leq C\left\|  \varphi\right\|  _{\mathcal{H}}^{2}, \label{5.26}%
\end{equation}%

\begin{equation}
| F_{2}^{\pm}(x,t;\varphi_{2})-F_{2}^{\pm}(x,t;\varphi_{1})|
_{s,\overline{\Omega_{T}^{\pm}}}^{(\gamma)}\leq C(\left\|
\varphi_{2}\right\|  _{\mathcal{H}}+\left\| \varphi_{1}\right\|
_{\mathcal{H}})\left\|
\varphi_{2}-\varphi_{1}\right\|  _{\mathcal{H}}, \label{5.27}%
\end{equation}%

\begin{equation}
|  F_{3}(x,t;\varphi)| _{\Gamma_{T}}^{(1+\beta
-\alpha,\frac{1+\beta
-\alpha}{2-\alpha})}\leq CT^{\mu}, \label{5.28}%
\end{equation}%

\begin{equation}
| F_{3}(x,t;\varphi_{2})-F_{3}^{\pm}(x,t;\varphi_{1})|
_{\Gamma_{T}}^{(1+\beta-\alpha,\frac{1+\beta
-\alpha}{2-\alpha})}\leq CT^{\mu}\left\| \varphi
_{2}-\varphi_{1}\right\|  _{\mathcal{H}}, \label{5.29}%
\end{equation}%

\begin{equation}
|  F_{4}(x,t;\varphi)|  _{\Gamma_{T}}^{(1+\beta
-\alpha,\frac{1+\beta
-\alpha}{2-\alpha})}\leq C\left\|  \varphi\right\|  _{\mathcal{H}}^{2}, \label{5.30}%
\end{equation}%

\begin{equation}
| F_{4}(x,t;\varphi_{2})-F_{4}(x,t;\varphi_{1})|
_{\Gamma_{T}}^{(1+\beta-\alpha,\frac{1+\beta
-\alpha}{2-\alpha})}\leq C(\left\| \varphi _{2}\right\|
_{\mathcal{H}}+\left\| \varphi_{1}\right\| _{\mathcal{H}})\left\|
\varphi
_{2}-\varphi_{1}\right\|  _{\mathcal{H}}, \label{5.31}%
\end{equation}

\end{lemma}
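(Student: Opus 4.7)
The plan is to derive the decomposition \eqref{5.19}--\eqref{a5.23} by substituting $v^\pm = w^\pm + \theta^\pm$, $\rho = \sigma + \delta$ into the nonlinear problem \eqref{4.9}--\eqref{a4.1}, performing a first-order Taylor expansion of each nonlinearity around $\psi_0 = (w^+,w^-,\sigma)$, and collecting the principal linear part (the Frechet derivative with coefficients frozen at $t=0$) on the left-hand side. The remaining terms split naturally into two types: residuals $F_1^\pm, F_3$ depending only on $\psi_0$ together with corrections from freezing the coefficients at $t=0$, and the genuine quadratic Taylor remainders $F_2^\pm, F_4$.

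Concretely, I would expand $|w+\theta|^\alpha$ around $|w|^\alpha$, and expand the $\rho$-dependent objects $\nabla_\rho^2$ (through the inverse Jacobian $\mathcal{E}_\rho$), $h_\rho^\pm$, and $S_\rho(\omega,\rho,\rho_\omega)$ around $\sigma$ using their smooth dependence on $\rho$ and $\nabla\rho$, which is guaranteed on $\mathcal{B}_r$ by the diffeomorphism condition \eqref{4.2}. After substitution into \eqref{4.9}, the linear-in-$(\theta^\pm,\delta)$ terms carry coefficients of the form $|w^\pm|^\alpha$ and $\partial w^\pm/\partial\lambda$; replacing $|w^\pm|^\alpha$ in the leading diffusion by $|u_0^\pm|^\alpha$ is legitimate because $w^\pm(x,0)=v_0^\pm$ and $\sigma(\omega,0)=0$, so the coefficient differences vanish at $t=0$ and get absorbed into $F_1^\pm$. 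The factor $\partial w^\pm/\partial\lambda$ appearing in \eqref{5.19} and \eqref{5.21} arises from linearizing $h_\rho^\pm$ and $S_\rho$ in $\rho$ evaluated at $\sigma$, while the $|u_0^\pm|^\alpha\nabla^2\delta$ contribution comes from the Frechet derivative of $|v|^\alpha\nabla_\rho^2 v$ in $\rho$, contracted with the extension $\delta(x,t)=E\delta(\omega,t)$.

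The estimates \eqref{5.24}, \eqref{5.28} on the residuals rest on two facts established in Section~\ref{s5}: by construction \eqref{5.1}--\eqref{5.7} the triple $\psi_0$ satisfies the nonlinear problem exactly at $t=0$, so $F_1^\pm(\cdot,0)\equiv 0$ and $F_3(\cdot,0)\equiv 0$; and $\psi_0$ has the higher regularity exponents $\gamma' > \gamma$, $\beta' > \beta$, so the residuals belong to the corresponding spaces with higher smoothness index and zero trace at $t=0$. The embedding inequalities \eqref{B.3}--\eqref{B.6} for spaces with zero initial trace then yield the factor $T^\mu$. The Lipschitz-type bounds \eqref{5.25}, \eqref{5.29} follow analogously, since the dependence of $F_1^\pm, F_3$ on $\varphi$ enters only through bounded smooth coefficient expressions multiplied by factors already small in $T$.

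For $F_2^\pm$ and $F_4$, which by construction are the second-order Taylor remainders $F(\psi_0+\varphi)-F(\psi_0)-F'(\psi_0)\varphi$, the bounds \eqref{5.26}, \eqref{5.27}, \eqref{5.30}, \eqref{5.31} follow from the integral representation $\int_0^1 (1-s)F''(\psi_0+s\varphi)(\varphi,\varphi)\,ds$ combined with the product inequality \eqref{B.5} in the weighted Hölder spaces. The main technical obstacle I anticipate is controlling the nonlinearity $|v|^\alpha$ with $\alpha\in(0,1)$, which fails to be smooth precisely where $v^\pm$ vanishes on $\Gamma$. This is handled using the nondegeneracy hypothesis \eqref{1.12}, which forces $v_0^\pm \sim \pm d^\pm$ near $\Gamma$; combined with the smallness of $\theta^\pm$ in $\mathcal{B}_r$ (so that $v^\pm$ retains the sign and order of vanishing of $v_0^\pm$), this gives $|v^\pm|\sim d^\pm$, so that $|v^\pm|^\alpha$ behaves like the weight $(d^\pm)^\alpha$ built into the definition of $C^{2+\gamma}_s$ and all products admit the required estimates.
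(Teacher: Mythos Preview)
Your overall framework---substitute, Taylor expand around $\psi_0$, freeze coefficients at $t=0$, and use the zero-trace embeddings \eqref{B.3}--\eqref{B.6} for the $T^\mu$ factors---matches the paper's strategy and is sound for the boundary condition \eqref{4.11} and for the $\rho$-dependent objects $\mathcal{E}_\rho$, $h_\rho^\pm$, $S_\rho$, which depend smoothly on $(\rho,\nabla\rho)$ on $\mathcal{B}_r$.

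There is, however, a real gap in your treatment of the degenerate coefficient. You propose to ``expand $|w+\theta|^\alpha$ around $|w|^\alpha$'' and then place the second-order Taylor remainder into $F_2^\pm$ via $\int_0^1(1-s)F''(\psi_0+s\varphi)(\varphi,\varphi)\,ds$. This does not work as stated: since $w^\pm$ vanishes on $\Gamma$ and $\alpha\in(0,1)$, the map $z\mapsto|z|^\alpha$ is not $C^2$ at $z=0$, so $F''$ is not defined (its formal expression $\alpha(\alpha-1)|w|^{\alpha-2}$ blows up like $(d^\pm)^{\alpha-2}$). Your final paragraph identifies the right ingredient ($|v^\pm|\sim d^\pm$) but does not supply the step that removes the singularity.

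The paper's resolution is to factor first: write $|v^+|^\alpha=\lambda(x)^\alpha\bigl(v^+/\lambda\bigr)^\alpha$ and use the representation $v^+/\lambda=\int_0^1 v^+_\lambda(s\lambda,\omega,t)\,ds$, which by \eqref{1.12} and the smallness of $r,T$ is bounded below by $\nu>0$ on $\mathcal{N}_T$. Then $\Phi^+(x,t,\varphi)=v^+/\lambda$ is a smooth, strictly positive element of $C_s^{\gamma}$, so $(\Phi^+)^\alpha$ is a genuinely smooth nonlinearity and all the Hölder algebra goes through. With this factorization your Taylor-remainder scheme would work. The paper itself takes an even simpler route: it does \emph{not} linearize $|v^+|^\alpha$ at all, but writes
\[
|v^+|^\alpha\nabla_\rho^2 v^+ = |v_0^+|^\alpha\nabla_\rho^2 v^+ + \bigl[(\Phi^+)^\alpha-(\Phi_0^+)^\alpha\bigr]\lambda^\alpha\nabla_\rho^2 v^+,
\]
and shows the entire bracketed term (call it $\mathcal{A}^+(\varphi)$) satisfies $|\mathcal{A}^+(\varphi)|_{s}^{(\gamma)}\le CT^\mu$ and $|\mathcal{A}^+(\varphi_2)-\mathcal{A}^+(\varphi_1)|_{s}^{(\gamma)}\le CT^\mu\|\varphi_2-\varphi_1\|_{\mathcal H}$, because $\Phi^+-\Phi_0^+$ vanishes at $t=0$. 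Thus $\mathcal{A}^+$ is absorbed wholesale into $F_1^\pm$, and no quadratic estimate is ever needed for the degenerate coefficient. This reduces the remaining linearization to that of an equation \emph{linear} in $v^+$ with fixed degenerate weight $|v_0^+|^\alpha$, which is the case already treated in \cite{5}. Correspondingly the paper's $F_4$ is just $-k\delta_t\delta_\lambda$; everything else lands in $F_1^\pm,F_3$.
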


\begin{proof}

Meaning of the inequalities \eqref{5.24}-\eqref{5.31} is that,
according to \eqref{5.11}, the expressions $F_{1}^{\pm}$ and $F_{3}$
contain smoother terms and to evaluate them, we use inequalities
\eqref{B.3}-\eqref{B.6}, and the expressions $F_{2}^{\pm}$ and
$F_{4}$ are "quadratic" with respect to $\varphi$.

In the case of a uniformly parabolic equation in
\eqref{4.3}(\eqref{1.3}), this lemma is proved in details in
\cite{5}, Section 2.3. Therefore, we mention only the differences
that arise in the case of degenerate equations.

First, in contrast to the \cite{5}, we can not expect that the
extended function $\rho(x,t)=E\rho(\omega,t) $ satisfies the
condition $\partial \rho(x,t)/\partial \lambda = 0$ on $\Gamma_T$,
and so we explain the obtaining of the relation \eqref{5.21} from
the relation \eqref{4.11}. The relation \eqref{5.21} is obtained
from \eqref{4.11} explicitly after substitution in \eqref{4.11} the
expressions $v^{\pm}= \theta^{\pm}$, $\rho=\delta + \sigma$ and the
transfer of the junior and quadratic terms in the righthand part. It
is easy to verify that \eqref{5.21} coincides with \eqref{4.11} for

\[
F_{3}(x,t,\varphi)\equiv\left\{  \left[  \sum\limits_{i,j=1}^{N-1}%
m_{ij}(x,\rho)\rho_{\omega_{i}}\rho_{\omega_{j}}(a^{+}\frac{\partial v^{+}%
}{\partial\lambda}-a^{-}\frac{\partial
v^{-}}{\partial\lambda})\right] \right.  -
\]%

\[
-\left[  k\sigma_{t}-(a^{+}\frac{\partial w^{+}}{\partial\lambda}-a^{-}%
\frac{\partial w^{-}}{\partial\lambda})\right]
-\delta_{\lambda}\left[ k\sigma_{t}-(a^{+}\frac{\partial
w^{+}}{\partial\lambda}-a^{-}\frac{\partial
w^{-}}{\partial\lambda})\right]  -
\]%

\begin{equation}
\left.  -\left[  k\rho_{t}\sigma_{\lambda}\right]  \right\}  , \label{5.32}%
\end{equation}%

\begin{equation}
F_{4}(x,t,\varphi)\equiv-k\delta_{t}\delta_{\lambda}, \label{5.33}%
\end{equation}
where in \eqref{5.32} $\rho=\delta+\sigma$, $v^{\pm}=\theta^{\pm}$.

Using the fact that
$\delta(\omega,0)=\sigma(\omega,0)=\rho(\omega,0)=0$, estimating
each term in square brackets in \eqref{5.32} separately, and using
the inequalities \eqref{B.3}-\eqref{B.6} it is easy to obtain for
$F_{3}(x,t,\varphi)$ the estimates \eqref{5.28}, \eqref{5.29}. For
example, since by the construction

\[
k\sigma_{t}(x,0)-(a^{+}\frac{\partial w^{+}(x,0)}{\partial\lambda}-a^{-}%
\frac{\partial w^{-}(x,0)}{\partial\lambda})=0,\quad x\in\Gamma,
\]
then

\[
\left|  \delta_{\lambda}\left[  k\sigma_{t}-(a^{+}\frac{\partial w^{+}%
}{\partial\lambda}-a^{-}\frac{\partial
w^{-}}{\partial\lambda})\right] \right|
_{\Gamma_{T}}^{(1+\beta-\alpha,\frac{1+\beta
-\alpha}{2-\alpha})}\leq
\]%

\[
\leq CT^{\mu}\left|  \delta_{\lambda}\right|  _{\Gamma_{T}%
}^{(1+\beta-\alpha,\frac{1+\beta -\alpha}{2-\alpha})}\left| \left[
k\sigma_{t}-(a^{+}\frac{\partial
w^{+}}{\partial\lambda}-a^{-}\frac{\partial
w^{-}}{\partial\lambda})\right] \right|
_{\Gamma_{T}}^{(1+\beta-\alpha,\frac{1+\beta
-\alpha}{2-\alpha})}\leq
\]%

\[
\leq CT^{\mu}\left|  \delta\right| _{\Gamma_{T}}^{(2+\beta
-\alpha,\frac{2+\beta -\alpha}{2-\alpha})}\leq CT^{\mu}\left\|
\varphi\right\|  _{\mathcal{H}}.
\]
Since this term is linear with respect to $\varphi$, this yields \eqref{5.28}, \eqref{5.29} for this term.
The remaining terms in the definition of $F_{3}(x,t,\varphi)$ are treated similarly.

As for $F_{4}(x,t,\varphi)$, the estimates \eqref{5.30}, \eqref{5.31} for this expression are obvious
because it is quadratic.

Another difference from \cite{5} is the presence of a degenerate factor in the third (elliptical) terms
in the left-hand side of \eqref{4.9}. Represent this term as (we consider only the equation for the sign "+")

\[
(v^{+})^{\alpha}\nabla_{\rho}^{2}v^{+}=(v_{0}^{+})^{\alpha}\nabla_{\rho}%
^{2}v^{+}+[(v^{+})^{\alpha}-(v_{0}^{+})^{\alpha}]\nabla_{\rho}^{2}v^{+}\equiv
\]%

\begin{equation}
\equiv(v_{0}^{+})^{\alpha}\nabla_{\rho}^{2}v^{+}+A_{2}^{(1)+}(\varphi).
\label{5.38}%
\end{equation}
and show that $A_{2}^{(1)+}(\varphi)$ satisfies the inequalities
\begin{equation}
\left|  A_{2}^{(1)+}(\varphi)\right|  _{s,\overline{\Omega_{T}^{+}}%
}^{(\gamma)}\leq CT^{\mu},\left|  A_{2}^{(1)+}%
(\varphi_{2})-A_{2}^{(1)+}(\varphi_{1})\right|  _{s,\overline{\Omega_{T}%
^{+}}}^{(\gamma)}\leq CT^{\mu}\left\| \varphi
_{2}-\varphi_{1}\right\|  _{\mathcal{H}}, \label{5.36}%
\end{equation}%

Let us assume that the function $\lambda=\lambda(x)$ is extended with the preservation of the
class from the neighborhood $\mathcal{N}$
of the surface $\Gamma$ on all $\overline{\Omega}$  to a function
satisfying the conditions

\[
\nu d^{+}(x)\leq\lambda(x)\leq\nu^{-1}d^{+}(x),
\]
retaining for her the same notation. Write further $A_{2}^{(1)+}(\varphi) $in the form

\begin{equation}
A_{2}^{(1)+}(\varphi)=\left[\left(  \frac{v^{+}}{\lambda}\right)
^{\alpha }-\left(  \frac{v_{0}^{+}}{\lambda}\right)
^{\alpha}\right]\lambda^{\alpha
}(x)\nabla_{\rho}^{2}v^{+}. \label{5.39}%
\end{equation}
Since $v_{0}, v^{+}=0$ for $x\in \Gamma$, then for $x\in
\mathcal{N}$

\begin{equation}
\frac{v^{+}}{\lambda}=\int\limits_{0}^{1}\frac{\partial
v^{+}}{\partial \lambda}(\lambda
s,\omega,t)ds,\quad\frac{v_{0}^{+}}{\lambda}=\int
\limits_{0}^{1}\frac{\partial v_{0}^{+}}{\partial\lambda}(\lambda
s,\omega)ds,
\label{5.40}%
\end{equation}
where we assume the neighborhood $\mathcal{N}$ so small that

\begin{equation}
\frac{\partial v_{0}^{+}}{\partial\lambda}\geq\nu>0,\quad x\in
\mathcal{N},
\label{5.41}%
\end{equation}
In addition, we assume that $T$ is so small that

\[
\frac{\partial w^{+}}{\partial\lambda}(x,t)\geq\nu>0,\quad(x,t)\in
\mathcal{N}\times\lbrack0,T].
\]
Assuming now that the radius $r=r(\nu)$ of the ball $\mathcal{B}_{r}\ni \varphi$
is sufficiently small, we can assume that for $\varphi \in \mathcal{B}_{r}$

\begin{equation}
\frac{\partial v^{+}}{\partial\lambda}(x,t)=\frac{\partial w^{+}}%
{\partial\lambda}(x,t)+\frac{\partial\theta^{+}}{\partial\lambda}(x,t)\geq
\nu>0,\quad(x,t)\in N\times\lbrack0,T]. \label{5.42}%
\end{equation}
In addition, outside the neighborhood $\mathcal{N}_{T}$ holds

\begin{equation}
v_{0}^{+}(x)\geq\nu>0,\quad x\in\overline{\Omega^{+}}\setminus \mathcal{N}. \label{5.43}%
\end{equation}
Therefore, assuming as above $T$ and $r$ sufficiently small, we can assume that
\begin{equation}
v^{+}(x,t)=w^{+}(x,t)+\theta^{+}(x,t)\geq\nu>0,\quad x\in\overline{\Omega^{+}%
}\setminus N. \label{5.44}%
\end{equation}
Thus, we have the representation
\begin{equation}
\frac{v^{+}(x,t)}{\lambda(x)}=\Phi^{+}(x,t,\varphi)=%
\genfrac{\{}{.}{0pt}{}{\int\limits_{0}^{1}\frac{\partial
v^{+}}{\partial \lambda}(\lambda s,\omega,t)ds\geq\nu,\quad x\in
\mathcal{N},}{\frac{v^{+}}{\lambda
},\quad x\in\overline{\Omega^{+}}\setminus \mathcal{N},}%
\label{5.45}%
\end{equation}
and
\begin{equation}
\left|  \Phi^{+}(x,t,\varphi)\right|_{s,\overline{\Omega_{T}^{+}}}%
^{(\gamma)}\leq C(\left|  \frac{\partial v^{+}}{\partial\lambda
}\right| _{s,\overline{\Omega_{T}^{+}}}^{(\gamma)}+\left|
v^{+}\right|_{s,\overline{\Omega_{T}^{+}}}^{(\gamma)})\leq C\left(
\left\|  \varphi\right\|  _{\mathcal{H}}\right)  , \label{5.46}%
\end{equation}
and also
\begin{equation}
\Phi^{+}(x,t,\varphi)\geq\nu>0. \label{5.47}%
\end{equation}
Similarly
\begin{equation}
\frac{v_{0}^{+}(x)}{\lambda(x)}=\Phi_{0}^{+}(x)=%
\genfrac{\{}{.}{0pt}{}{\int\limits_{0}^{1}\frac{\partial v_{0}^{+}}%
{\partial\lambda}(\lambda s,\omega)ds\geq\nu,\quad x\in \mathcal{N},}{\frac{v_{0}^{+}%
}{\lambda},\quad x\in\overline{\Omega^{+}}\setminus
\mathcal{N},}\label{5.48}
\end{equation}
 so
\begin{equation}
\left|  \Phi_{0}^{+}(x)\right|  _{s,\overline{\Omega_{T}^{+}}}%
^{(\gamma)}\leq C,\quad\Phi_{0}^{+}(x)\geq\nu>0.\label{5.49}
\end{equation}
In addition, it follows from the representations \eqref{5.45} and \eqref{5.48} that

\[
\left|  \Phi^{+}(x,t,\varphi)-\Phi_{0}^{+}(x)\right| _{s,\overline
{\Omega_{T}^{+}}}^{(\gamma)}\leq
\]%
\begin{equation}
\leq C\left(  \left|  v^{+}-v_{0}^{+}\right|_{s,\overline{\Omega_{T}^{+}}%
}^{(\gamma)}+\left| \frac{\partial}{\partial\lambda}\left(
v^{+}-v_{0}^{+}\right) \right|
_{s,\overline{\Omega_{T}^{+}}}^{(\gamma)}\right)  \leq C(\left\|
\varphi\right\|  )T^{\mu}, \label{5.50}%
\end{equation}
and similarly
\begin{equation}
\left| \Phi^{+}(x,t,\varphi_{1})-\Phi^{+}(x,t,\varphi_{2})\right|
_{s,\overline{\Omega_{T}^{+}}}^{(\gamma)}\leq CT^{\mu}\left\|
\varphi_{2}-\varphi_{1}\right\|  . \label{5.51}%
\end{equation}
By the properties \eqref{5.47} and \eqref{5.49}, the mapping
\begin{equation}
\varphi\rightarrow B_{1}(\varphi)\equiv\left(
\frac{v^{+}}{\lambda}\right) ^{\alpha}-\left(
\frac{v_{0}^{+}}{\lambda}\right)  ^{\alpha}=\left[  \Phi
^{+}(x,t,\varphi)\right]  ^{\alpha}-\left[
\Phi_{0}^{+}(x,\varphi)\right]
^{\alpha} \label{a5.51}%
\end{equation}
is smooth, and by \eqref{5.50}, \eqref{5.51}
\[
\left|  B_{1}(\varphi)\right|
_{s,\overline{\Omega_{T}^{+}}}^{(\gamma)}\leq CT^{\mu},
\]%
\begin{equation}
\left|  B_{1}(\varphi_{2})-B_{1}(\varphi_{1})\right|
_{s,\overline{\Omega _{T}^{+}}}^{(\gamma)}\leq CT^{\mu}\left\|
\varphi_{2}-\varphi
_{1}\right\|  _{\mathcal{H}}. \label{5.52}%
\end{equation}

Now from the definition of the expression $A_{2}^{(1)+}(\varphi)$ in
 \eqref{5.39},from the relations \eqref{5.52} and from the smooth dependence of
$\nabla_{\rho}=\mathcal{E}_{\rho}\nabla$ on $\rho$, taking into account
that the factor $\lambda^{\alpha}(x)$ is appropriate  for the weighted
 estimates of the second derivatives of $v^{+}$ and $\theta^{+}$ in the
space $C_{0,s}^{2+\gamma, 1+\gamma/2}(\overline{\Omega^{+}_{T}})$, it is
easy to see that

\[
\left|  A_{2}^{(1)+}(\varphi)\right|_{s,\overline{\Omega_{T}^{+}}}%
^{(\gamma)}\leq CT^{\mu},
\]%
\begin{equation}
\left| A_{2}^{(1)+}(\varphi_{2})-A_{2}^{(1)+}(\varphi_{1})\right|
_{s,\overline{\Omega_{T}^{+}}}^{(\gamma)}\leq CT^{\mu}\left\|
\varphi_{2}-\varphi_{1}\right\|  _{\mathcal{H}}. \label{5.53}%
\end{equation}

Thus, in view of \eqref{5.38} and \eqref{5.53} the linearization of the equation
\eqref{4.9} is reduced to the linearization of a linear on $v^{+}$ equation that
was done in details in \cite{5}.

Note also that the insignificant difference between \eqref{5.19} from \cite{5} is still in that
we, in fact, leave in the left-hand side of \eqref{5.19} only the leading terms, moving all the other
to the expression $F_{1}^{\pm}(x,t,\varphi)$.

This completes the proof.
\end{proof}

\section{The linear problem corresponding to the problem \eqref{5.19}-\eqref{5.23}.}\label{s6}

In this section we consider the linear problem obtained from the problem \eqref{5.19}-\eqref{5.23}
for a given right-hand sides from corresponding classes. In this case, $|v_{0}^{\pm}(x)|$ is replaced
by $d^{\pm}(x)B^{\pm}(x,t)\sim \lambda(x)\partial_{\lambda}v_{0}^{\pm}(x)$. And, as in the previous section,
we assume that $\lambda(x)$ is extended to all $\overline{\Omega}$ to a smooth function
of the class $H^{3+\gamma}$,

\begin{equation}
\nu\leq\lambda(x),d^{\pm}(x)\leq\nu^{-1},\quad
x\in\overline{\Omega}\setminus \mathcal{N}. \label{6.9}
\end{equation}

Thus, we consider in the domains $\overline{\Omega_{T}^{\pm}}$ the following problem of finding the functions $v^{\pm}(x,t)$,
defined in the domains $\overline{\Omega_{T}^{\pm}}$, and the function $\delta(\omega,t)$, defined on $\Gamma_{T} $, on
the conditions

\[
\frac{\partial v^{\pm}}{\partial t}-\lambda(x)^{\alpha}B^{\pm}(x,t)\nabla^{2}%
v^{\pm}-A^{\pm}(x,t)\left(  \frac{\partial\delta}{\partial
t}-\lambda(x)^{\alpha }B^{\pm}(x,t)\nabla^{2}\delta\right)=
\]

\begin{equation}
=f_{1}^{\pm}(x,t), \ \ (x,t)\in\Omega_{T}^{\pm},
\label{6.1}%
\end{equation}%

\begin{equation}
v^{+}(x,t)=v^{-}(x,t)=0,\quad(x,t)\in\Gamma_{T}, \label{6.2}%
\end{equation}%

\[
k\delta_{t}-\varepsilon\triangle_{\Gamma}\delta+\left(  a^{+}\frac{\partial v^{+}}{\partial\lambda}-a^{-}%
\frac{\partial v^{-}}{\partial\lambda}\right)  -\delta_{\lambda}(a^{+}%
A^{+}(x,t)-a^{-}A^{-}(x,t))=
\]

\begin{equation}
=f_{2}(x,t),\quad(x,t)\in\Gamma_{T}, \label{6.3}%
\end{equation}%

\begin{equation}
v^{\pm}(x,t)=0,\quad(x,t)\in\Gamma_{T}^{\pm}, \label{6.4}%
\end{equation}%

\begin{equation}
v^{\pm}(x,0)=0,\quad\delta(x,0)=0,\quad x\in\overline{\Omega^{\pm}}, \label{6.5}%
\end{equation}%

\begin{equation}
\delta(x,t)=E\delta(\omega,t), \label{6.6}%
\end{equation}
where the extension operator $E$ was defined in the section \ref{s2},
$\triangle_{\Gamma}$ is the Laplace-Beltrami operator on the surface
$\Gamma$ ( compare \cite{1}). We assume that

\begin{equation}
f_{1}^{\pm}(x,t)\in
C_{0,s}^{\gamma,\gamma/2}(\overline{\Omega_{T}^{\pm}}),\quad
f_{2}(x,t)\in C_{0}^{1+\beta-\alpha,\frac{1+\beta-\alpha}{2-\alpha}}(\Gamma_{T}), \label{6.7}%
\end{equation}
$\varepsilon, a^{\pm},k$ are given positive constants,
\begin{equation}
\nu\leq k, a^{\pm},B^{\pm}(x,t),A^{\pm}(x,t) \leq\nu^{-1}, \label{6.8}%
\end{equation}

\begin{equation}
A^{\pm}(x,t), B^{\pm}(x,t)\in C_{s}^{\gamma,\gamma/2}(\overline{\Omega}^{\pm}_{T}) . \label{6.800}%
\end{equation}
%------------------------------------------------

For the problem \eqref{6.1}-\eqref{6.6} by the standard method of the freezing of coefficients and
multiplication by smooth cutting functions we can obtain the Schauder a priori estimates of the solution
completely similar to \cite{12} (or \cite{5} in the case of the Stefan problem). At that the model problem,
obtained by the freezing of the coefficients in points of the boundary $\Gamma$ at $t=0$,with the subsequent
local rectification of the boundary, was studied in the section \ref{s3}. At considering such
a model problem the functions $B^{\pm}(x,t)$ and $A^{\pm}(x t)$ are replaced by
the constants $B^{\pm} \equiv  B^{\pm}(x_{0},0)$ and $A^{\pm}\equiv A^{\pm}(x_{0},0)$, $x_{0}\in \Gamma$.
After this, the change of the unknown function

\begin{equation}
u^{\pm}(x,t)=v^{\pm}(x,t)-A^{\pm}\delta\label{CC.1}%
\end{equation}
reduces the problem \eqref{6.1}-\eqref{6.6} with the frozen coefficients and with the flat boundary
exactly to the problem \eqref{M.2}-\eqref{M.6}.

From these model problems associated with the boundary $\Gamma$ we get the estimate
of the function $\delta(x,t)|_{\Gamma}$ and border estimates of the functions $v^{\pm}(x,t)$.
After that, the rest of the model problems associated with a strictly interior points of $\Omega^{\pm}$
are standard because of the condition \eqref{6.6}, and due to the absence of degeneracy of the equations
at these points - see \cite{12}.

Thus, the following is true.

\begin{lemma} \label{LC1}

Suppose that the conditions \eqref{6.7}-\eqref{6.800}. Then for the solution
of the problem \eqref{6.1}-\eqref{6.6} from the class
 $v^{\pm}\in C_{s}^{2+\gamma,\frac{2+\gamma}{2}}(\overline{\Omega}%
_{T}^{\pm})$, $\delta\in
C^{3+\beta-\alpha,1+\frac{1+\beta-\alpha}{2-\alpha} }(\Gamma_{T})$
the following a priori estimate is valid

\[
\left|  v^{\pm}\right|
_{s,\overline{\Omega}_{T}^{\pm}}^{(2+\gamma)}+\left| \delta\right|
_{\Gamma_{T}}^{(2+\beta-\alpha,\frac{2+\beta-\alpha}{2-\alpha
})}+\left|  \delta_{t}\right| _{\Gamma_{T}}^{(1+\beta-\alpha,\frac
{1+\beta-\alpha}{2-\alpha})}+\varepsilon\sum\limits_{i,j=1}^{N-1}\left|
\delta_{\omega_{i}\omega_{j}}\right|
_{\Gamma_{T}}^{(1+\beta-\alpha
,\frac{1+\beta-\alpha}{2-\alpha})}\leq
\]%

\begin{equation}
\leq C_{T}\left(  \left|  f_{1}^{+}\right|  _{s,\overline{\Omega}_{T}^{+}%
}^{(\gamma)}+\left|  f_{1}^{-}\right|  _{s,\overline{\Omega}_{T}^{-}}%
^{(\gamma)}+\left|  f_{2}\right|
_{\Gamma_{T}}^{(1+\beta-\alpha,\frac
{1+\beta-\alpha}{2-\alpha})}\right)  \equiv C_{T}\mathcal{M}(T),\label{CC.2}%
\end{equation}
where the constant $C_{T}$ in \eqref{CC.2} does not depend on $\varepsilon\in
(0,1)$.
\end{lemma}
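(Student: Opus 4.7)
The strategy is the standard Schauder localization procedure (freezing of coefficients plus multiplication by smooth cutoffs, in the style of \cite{12}), with the model problem near $\Gamma$ supplied by Theorem \ref{TM1}. Fix a radius $r>0$ to be chosen, and introduce a finite partition of unity $\{\zeta_k\}_{k=1}^M$ on $\overline{\Omega}$ subordinate to balls $B_k$ of radius $r$, split into three types: (I) $B_k$ at distance $\geq r/2$ from $\Gamma\cup\Gamma^+\cup\Gamma^-$; (II) $B_k$ centred on $\Gamma^\pm$; (III) $B_k$ centred on $\Gamma$. On balls of type (I) and (II) the equation \eqref{6.1} is uniformly parabolic (since $\lambda(x)\ge \nu>0$ there), and by the extension property \eqref{6.6} combined with Lemma \ref{LP2} the support of $\delta(x,t)$ lies in the neighborhood $\mathcal{N}_T$ of $\Gamma_T$; thus the standard non-weighted Schauder estimates of \cite{12} apply directly to $\zeta_k v^\pm$, producing bounds by $|f_1^\pm|^{(\gamma)}_s + |f_2|$ plus lower-order norms.

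The heart of the matter is the type (III) pieces. For such a ball centred at $(x_0,0)\in\Gamma\times\{0\}$, I freeze $B^\pm(x,t)\rightsquigarrow B^\pm(x_0,0)$ and $A^\pm(x,t)\rightsquigarrow A^\pm(x_0,0)$, locally straighten $\Gamma$ to $\{x_N=0\}$ so that $\lambda(x)\sim \pm x_N$ (using \eqref{6.9}), and then perform the substitution \eqref{CC.1}, $u^\pm=v^\pm-A^\pm(x_0,0)\,\delta$. This cancels the $(\partial_t\delta -\lambda^\alpha B^\pm\nabla^2\delta)$ block in \eqref{6.1}, so that $(\zeta_k u^+,\zeta_k u^-,\zeta_k\delta)$ satisfies exactly the model system \eqref{M.2}--\eqref{M.6} (with the regularisation $-\varepsilon\Delta_{x'}$ surviving from \eqref{6.3}), whose right-hand sides are $\zeta_k f_1^\pm$, $\zeta_k f_2$ plus commutator terms $[\zeta_k,L^\pm]v^\pm$, cutoff-derivative terms from the Laplace--Beltrami operator, and coefficient-oscillation terms $(B^\pm(x,t)-B^\pm(x_0,0))\nabla^2(\zeta_k u^\pm)$ and similarly for $A^\pm$. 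Theorem \ref{TM1} then yields the desired $\varepsilon$-independent bound on $\zeta_k u^\pm$ and $\zeta_k\delta$ in terms of these right-hand sides.

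Summing over $k$ and undoing the substitution $v^\pm=u^\pm+A^\pm\delta$, the contributions to be absorbed on the left are: (a) the commutator/cutoff terms, which involve only first derivatives of $v^\pm$ and first derivatives of $\delta$, hence by the interpolation inequalities \eqref{B.3}--\eqref{B.6} and standard ones in the weighted spaces (see \cite{12.1}, \cite{12}, Ch.IV) are bounded by $\eta\,\mathrm{LHS}+C_\eta$ times the $C^{0}$ norms; (b) the coefficient-oscillation terms, which carry a factor $\omega_{B,A}(r)\to0$ as $r\to0$ by the continuity \eqref{6.800} and so are absorbed by choosing $r$ small; (c) the $C^{0}$ norms themselves, which are controlled by the right-hand side $\mathcal M(T)$ via the $T$-factors in \eqref{B.3}--\eqref{B.6} applied to functions vanishing at $t=0$. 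Combining gives \eqref{CC.2} with constant $C_T$ independent of $\varepsilon\in(0,1)$, the last point being automatic since it is already so in Theorem \ref{TM1}.

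The main technical obstacle will be (c) — or rather, verifying that the commutators between $\zeta_k$ and the degenerate operator $\lambda(x)^\alpha B^\pm\nabla^2$ are indeed controlled in the anisotropic weighted norms $C^{\gamma,\gamma/2}_s$ and $C^{2+\gamma,(2+\gamma)/2}_s$; here the equivalent-norm description from Lemma \ref{LE1} (and its global counterpart \eqref{E24}) is essential, since it allows one to separate the pure $|x-\bar x|^\beta$ part of the weighted norm, where $\zeta_k$ behaves as in the classical setting, from the weighted piece carrying the factor $x_N^{\gamma\alpha/2}$, where the cutoff's derivatives occur only on the scale $x_N\sim r$ and thus can be absorbed after $r$ is fixed small. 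Matching the traces on $\Gamma_T$ required for the substitution \eqref{CC.1} is guaranteed by Lemmas \ref{LSl2} and \ref{LP2}.
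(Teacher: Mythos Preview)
Your proposal is correct and follows essentially the same approach as the paper, which in fact does not give a formal proof of Lemma~\ref{LC1} but only the explanatory paragraph preceding it: freezing of coefficients plus cutoff localization as in \cite{12}, the model problem near $\Gamma$ being exactly Theorem~\ref{TM1} after the substitution \eqref{CC.1}, and the interior/$\Gamma^\pm$ pieces being standard since the equation is uniformly parabolic there. Your write-up is simply a more detailed fleshing-out of that sketch, including the explicit handling of commutator and coefficient-oscillation terms and the use of the equivalent-norm Lemma~\ref{LE1}, all of which are implicit in the paper's reference to \cite{12} and \cite{5}.
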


We now show the solvability of the problem \eqref{6.1}-\eqref{6.6}.

\begin{theorem} \label{TC1}

Suppose that the conditions \eqref{6.7}-\eqref{6.800} are satisfied.
Then for $\varepsilon \in (0,1)$ the problem \eqref{6.1}- \eqref {6.6}
is solvable in the space $v^{\pm}\in
C_{s}^{2+\gamma,\frac{2+\gamma}{2}}(\overline{\Omega
}_{T}^{\pm})$, $\delta\in C^{3+\beta-\alpha,1+\frac{1+\beta-\alpha}{2-\alpha}%
}(\Gamma_{T})$, and the estimate of the solution \eqref{CC.2} is valid.

When $\varepsilon =0 $ the problem \eqref{6.1}-\eqref{6.6} is solvable
in the space $v^{\pm}\in C_{s}^{2+\gamma,\frac{2+\gamma}{2}}(\overline{\Omega}%
_{T}^{\pm})$, $\delta\in C^{2+\beta-\alpha,\frac{2+\beta-\alpha}{2-\alpha}%
}(\Gamma_{T})$ ,$\delta_{t}\in
C^{1+\beta-\alpha,\frac{1+\beta-\alpha }{2-\alpha}}(\Gamma_{T})$,
and the estimate \eqref{CC.2} without the term with $\varepsilon$
is valid.

\end{theorem}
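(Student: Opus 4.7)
Since the a priori estimate \eqref{CC.2} is already in hand from Lemma \ref{LC1}, what remains is existence. The plan is to treat the case $\varepsilon>0$ first by the method of continuity, deforming away the coupling between $(v^{+},v^{-})$ and $\delta$, and then to recover the case $\varepsilon=0$ by a compactness limit that uses the $\varepsilon$-uniformity of \eqref{CC.2}.

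For $\varepsilon>0$, introduce a parameter $\tau\in[0,1]$ and replace $A^{\pm}(x,t)$ by $\tau A^{\pm}(x,t)$ wherever it occurs in \eqref{6.1} and \eqref{6.3}. At $\tau=0$ the two subsystems decouple: each $v^{\pm}$ solves a stand-alone Cauchy-Dirichlet problem of the form \eqref{B.8}-\eqref{B.11}, uniquely solvable in $C_{0,s}^{2+\gamma,\frac{2+\gamma}{2}}(\overline{\Omega}_{T}^{\pm})$ by Theorem \ref{TB1}; then by Lemma \ref{LSl2}, specifically \eqref{SlA.9}, the trace $\partial_{\lambda}v^{\pm}|_{\Gamma_{T}}$ lies in $C^{1+\beta-\alpha,\frac{1+\beta-\alpha}{2-\alpha}}(\Gamma_{T})$. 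Substituting these data into \eqref{6.3} (with $\tau=0$) yields the uniformly parabolic scalar equation $k\delta_{t}-\varepsilon\triangle_{\Gamma}\delta=\widetilde{f}_{2}$ on the compact manifold $\Gamma$, with $\widetilde{f}_{2}\in C^{1+\beta-\alpha,\frac{1+\beta-\alpha}{2-\alpha}}(\Gamma_{T})$, which is uniquely solvable in $C^{3+\beta-\alpha,1+\frac{1+\beta-\alpha}{2-\alpha}}(\Gamma_{T})$ by classical parabolic Schauder theory on manifolds (here \eqref{1.9} furnishes $\Gamma\in H^{4+\gamma}$). Thus solvability at $\tau=0$ is established. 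The proof of Lemma \ref{LC1} (through Theorem \ref{TM1}) does not distinguish $A^{\pm}$ from $\tau A^{\pm}$, so \eqref{CC.2} holds for the whole family with a constant $C_{T}$ independent of $\tau$. Standard openness-closedness reasoning then yields solvability at $\tau=1$.

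For $\varepsilon=0$, take any sequence $\varepsilon_{n}\downarrow0$ and the corresponding solutions $(v_{n}^{\pm},\delta_{n})$ from the preceding step. Because $C_{T}$ in \eqref{CC.2} is independent of $\varepsilon\in(0,1)$, the norms
\[
|v_{n}^{\pm}|_{s,\overline{\Omega}_{T}^{\pm}}^{(2+\gamma)},\quad|\delta_{n}|_{\Gamma_{T}}^{(2+\beta-\alpha,\frac{2+\beta-\alpha}{2-\alpha})},\quad|(\delta_{n})_{t}|_{\Gamma_{T}}^{(1+\beta-\alpha,\frac{1+\beta-\alpha}{2-\alpha})}
\]
are uniformly bounded. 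Compact embeddings of each H\"older-type class into the corresponding class with slightly smaller exponents (and the analogous compactness for the weighted spaces $C_{s}^{2+\gamma}$, cf. \cite{12.1}) produce a subsequential limit $(v^{\pm},\delta)$. The term $\varepsilon_{n}\triangle_{\Gamma}\delta_{n}$ vanishes in the limit since $\varepsilon_{n}\to0$ while the tangential second derivatives of $\delta_{n}$ remain bounded; all the other linear terms in \eqref{6.1}-\eqref{6.6} pass to the limit pointwise, and a final application of \eqref{CC.2} to the limit places it in the sharp class.

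The main obstacle is the bookkeeping for openness-closedness at the level of the precise anisotropic weighted classes $C_{s}^{2+\gamma,\frac{2+\gamma}{2}}(\overline{\Omega}_{T}^{\pm})\times P^{2+\beta-\alpha}(\Gamma_{T})$: openness at a solvable parameter $\tau_{0}$ reduces to invertibility of a small perturbation of \eqref{6.1}-\eqref{6.6}, so it must be iterated together with the a priori bound, while closedness requires compactness of the lower-order part of each H\"older norm under boundedness of the higher-order part in these weighted classes. A secondary technical point is verifying at $\tau=0$ that the source term $\widetilde{f}_{2}$ for the $\delta$-equation genuinely belongs to $C^{1+\beta-\alpha,\frac{1+\beta-\alpha}{2-\alpha}}(\Gamma_{T})$, which is exactly what the trace estimate \eqref{SlA.9} of Lemma \ref{LSl2} supplies, applied to the $v^{\pm}$ produced by Theorem \ref{TB1}.
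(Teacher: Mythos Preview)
Your approach via the method of continuity is genuinely different from the paper's. The paper instead builds a contraction map: for a given $\delta\in C_{0}^{2+\beta-\alpha,\frac{2+\beta-\alpha}{2-\alpha}}(\Gamma_{T})$ it first solves \eqref{6.1}, \eqref{6.2}, \eqref{6.4}, \eqref{6.5} for $v^{\pm}$ via Theorem~\ref{TB1}, then feeds $\partial_{\lambda}v^{\pm}$ and $\delta_{\lambda}$ into the right-hand side of the uniformly parabolic problem \eqref{6.3} to produce $M\delta$. The smoothing in \eqref{6.3} for $\varepsilon>0$ gains one order, so by \eqref{B.6} $M$ is a contraction on $C_{0}^{2+\beta-\alpha,\frac{2+\beta-\alpha}{2-\alpha}}(\Gamma_{T})$ for $T\leq T_{\varepsilon}$; stepping in $t$ then yields any $T$. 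The passage $\varepsilon\to0$ is the same compactness argument you give.

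Your route is correct in outline, but the assertion that ``the proof of Lemma~\ref{LC1} does not distinguish $A^{\pm}$ from $\tau A^{\pm}$'' is not quite right and needs an extra sentence. Hypothesis \eqref{6.8} includes the lower bound $\nu\leq A^{\pm}$, and in the model problem this positivity is used: in the comparison construction \eqref{M.50}--\eqref{M.51} the barriers are $H^{\pm}=A^{\pm}[L_{1}\theta^{\pm}+L_{2}h^{\pm}]$, so matching $H^{\pm}\geq|g^{\pm}|$ on $\Sigma^{\pm}$ forces $L_{1}\gtrsim 1/A^{\pm}$ and hence the constant produced by Lemma~\ref{LM1} degenerates as $\tau A^{\pm}\to0$. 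The fix is easy and standard: for $\tau\in[\tau_{0},1]$ apply Lemma~\ref{LC1} with the lower bound $\tau_{0}\nu$, while for $\tau\in[0,\tau_{0}]$ treat the $\tau A^{\pm}$-terms as perturbations of the decoupled problem you already solved at $\tau=0$ (Lemma~\ref{LPA1} gives $|u^{\pm}|_{s}^{(2+\gamma)}\leq C\tau|\rho|^{(2+\beta-\alpha,\ldots)}+C\mathcal{M}$, and the scalar parabolic estimate for \eqref{6.3} closes by absorption for $\tau_{0}$ small). With that patch your continuity argument goes through; the paper's contraction avoids this bookkeeping because it never moves the lower bound on $A^{\pm}$.
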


\begin{proof}

Define the linear operator  $M:\delta\rightarrow
v^{\pm}\rightarrow M\delta$ which maps a function $\delta\in
C^{2+\beta-\alpha,\frac{2+\beta-\alpha}{2-\alpha} }(\Gamma_{T})$
fist to the functions $v^{\pm}$, as the solution of the problem
\eqref{6.1}, \eqref{6.2}, \eqref{6.4}, \eqref{6.5} with the given
function $\delta$ in \eqref{6.1}, and then the functions $v^{\pm}$
the operator $M$ maps to the function $M\delta$, which is determined
from the condition \eqref{6.3} with the given $v^{\pm}$ and $\delta_{\lambda}$,
that is the function $M\delta$ is the solution of the problem

\begin{equation}
k(M\delta)_{t}-\varepsilon\Delta_{\Gamma}(M\delta)=f_{2}-\left(  a^{+}%
\frac{\partial v^{+}}{\partial\lambda}-a^{-}\frac{\partial v^{-}}%
{\partial\lambda}\right)  +\delta_{\lambda}(a^{+}A^{+}-a^{-}A^{-}),\label{CC.3}%
\end{equation}%

\[
M\delta(\omega,0)=0.
\]

By the theorem \ref{TB1} this operator is well defined,
and with $\varepsilon>0$, by the known properties
of the problem \eqref{CC.3},

\begin{equation}
\left|  M\delta\right|
_{\Gamma_{T}}^{(3+\beta-\alpha,1+\frac{1+\beta-\alpha
}{2-\alpha})}\leq C_{\varepsilon,T}\left(  \left|  \delta\right|
_{\Gamma
_{T}}^{(2+\beta-\alpha,\frac{2+\beta-\alpha}{2-\alpha})}+\mathcal{M}(T)\right)
,\label{CC.4}%
\end{equation}%

\begin{equation}
\left|  M\delta_{2}-M\delta_{1}\right|
_{\Gamma_{T}}^{(3+\beta-\alpha
,1+\frac{1+\beta-\alpha}{2-\alpha})}\leq C_{\varepsilon,T}\left|
\delta _{2}-\delta_{1}\right|
_{\Gamma_{T}}^{(2+\beta-\alpha,\frac{2+\beta-\alpha
}{2-\alpha})}.\label{CC.5}%
\end{equation}

Consequently, by \eqref{B.6},

\[
\left|  M\delta_{2}-M\delta_{1}\right|
_{\Gamma_{T}}^{(2+\beta-\alpha
,\frac{2+\beta-\alpha}{2-\alpha})}\leq CT^{\mu}\left|
M\delta_{2}-M\delta _{1}\right|
_{\Gamma_{T}}^{(3+\beta-\alpha,1+\frac{1+\beta-\alpha}{2-\alpha
})}\leq
\]%

\begin{equation}
\leq C_{\varepsilon,T}T^{\mu}\left|  \delta_{2}-\delta_{1}\right|
_{\Gamma_{T}}^{(2+\beta-\alpha,\frac{2+\beta-\alpha}{2-\alpha})}.\label{CC.6}%
\end{equation}

Thus, for a sufficiently small $T=T_{\varepsilon}$ the operator $M$ is a contraction
on $C_{0}^{2+\beta-\alpha,\frac{2+\beta-\alpha}{2-\alpha}}(\Gamma_{T})$ and therefore
has a unique fixed point, which by \eqref{CC.4}, belongs also to the space
$C_{0}^{3+\beta-\alpha, 1+\frac{1+\beta-\alpha}{2-\alpha}}(\Gamma_{T})$ and together with
the corresponding $v^{\pm}$ gives the solution of the problem. The estimate of the solution
is given by the lemma \ref{LC1}. Moving now step by step up the axis $Ot$ as in \cite{12},
we obtain the theorem with $\varepsilon>0$ for any $T>0$.

Further, by the estimate \eqref{CC.2}, considering the sequence of the
solutions $v^{\pm}_{\varepsilon}$, $\delta_{\varepsilon}$, $\varepsilon\rightarrow 0$,
we see that this sequence is compact in the spaces
 $C_{0,s}^{2+\overline{\gamma},\frac{2+\overline{\gamma}}{2}}%
(\overline{\Omega}_{T}^{\pm})$ and $C_{0}^{2+\overline{\beta}-\alpha
,\frac{2+\overline{\beta}-\alpha}{2-\alpha}}(\Gamma_{T})$
correspondingly for any $\overline {\gamma}<\gamma$,
$\overline{\beta}=\overline{\gamma}(1-\alpha/2)$. The passing to the limit
of this sequence in the spaces
 $C_{0,s}^{2+\overline{\gamma},\frac{2+\overline{\gamma}}{2}%
}(\overline{\Omega}_{T}^{\pm})$ and
$C_{0}^{2+\overline{\beta}-\alpha
,\frac{2+\overline{\beta}-\alpha}{2-\alpha}}(\Gamma_{T})$,
$\delta_{t}\in
C_{0}^{2+\overline{\beta}-\alpha,\frac{2+\overline{\beta}-\alpha}{2-\alpha}%
}(\Gamma_{T})$ gives the solution of the problem \eqref{6.1}-
\eqref{6.6} for $\varepsilon=0$. Besides, as it follows from the uniform
in $\varepsilon$ estimate \eqref{CC.2}, the limit function $v^{\pm}$ and $\delta$
belong to the spaces
$C_{0,s}^{2+\gamma,\frac{2+\gamma}{2}}(\overline{\Omega}_{T}
^{\pm})$ and
$C_{0}^{2+\beta-\alpha,\frac{2+\beta-\alpha}{2-\alpha}}(\Gamma
_{T})$, $\delta_{t}\in
C_{0}^{2+\beta-\alpha,\frac{2+\beta-\alpha}{2-\alpha}
}(\Gamma_{T})$ correspondingly.

Thus, the theorem \ref{TC1} is proved.

\end{proof}

\section{Completion of the proof of the theorem \ref{T1.1}. }\label{s7}

Доказательство теоремы \ref{T1.1} завершается по схеме, описанной
в параграфе \ref{s5}.

Определим на пространстве $\mathcal{H}_{0}$ ($\mathcal{H}_{0}$
определено в \eqref{5.13})  нелинейный оператор
$\mathcal{F}(\varphi)$, $\varphi=(\theta^{+},\theta^{-},\delta)\in
\mathcal{H}_{0}$  в \eqref{5.19}- \eqref{a5.23}, который каждому
заданному $\varphi$ в нелинейных правых частях соотношений
\eqref{5.19}, \eqref{5.21} ставит в соответствие решение линейной
задачи, определяемой левыми частями этих соотношений. При этом из
теоремы \ref{TC1} и леммы \ref{L5.1} следует, что оператор
$\mathcal{F}(\varphi)$ обладает следующими свойствами на шаре
$\mathcal{B}_{r}=\{\varphi: \ \|\varphi\|\leq r\}\subset
\mathcal{H}_{0}$ достаточно малого радиуса:

\begin{equation}
\|\mathcal{F}(\varphi)\|_{\mathcal{H}}\leq
C(T^{\alpha/2}+r)\|\varphi\|_{\mathcal{H}}, \label{75.1}
\end{equation}

\begin{equation}
\|\mathcal{F}(\varphi_{1})-\mathcal{F}(\varphi_{2})\|_{\mathcal{H}}\leq
C(T^{\alpha/2}+r)\|\varphi_{1}-\varphi_{2}\|_{\mathcal{H}}.
\label{75.2}
\end{equation}

Нетрудно видеть, что из соотношений \eqref{75.1} и \eqref{75.2}
следует, что при достаточно малых $T$ и $r$ оператор
$\mathcal{F}(\varphi)$ отображает замкнутый шар $\mathcal{B}_{r}$
в себя и является там сжимающим. Единственная неподвижная точка
этого оператора и дает решение исходной нелинейной задачи со
свободной границей. Тем самым теорема \ref{T1.1} доказана.
$\square$

%-----------------------------------------------

%----------------------------------------------------------------------------
%----------------------------------------------------------------------------

\newpage

%===================================================
%===================================================
%===================================================

%---------------------------------------

\end{document}